\numberwithin{equation}{section}
\numberwithin{figure}{section}
\newtheorem{Theorem}{Theorem}[section]
\newtheorem{Corollary}[Theorem]{Corollary}
\newtheorem{Proposition}[Theorem]{Proposition}
\newtheorem{rmk}[Theorem]{Remark}
\newtheorem{Definition}{Definition}[section]
\newcommand{\nn}{\nonumber}
\newcommand{\R}{{\mathbb R}}
\newcommand{\N}{{\mathbb N}}
\newcommand{\Rd}{\R^d}
\newcommand{\Grad}{\nabla_{\!x}}
\newcommand{\Gradx}{\nabla_{\!x}}
\newcommand{\del}{\partial}
\newcommand{\dr}{ \, {\rm d} r}
\newcommand{\dx}{ \, {\rm d} x}
\newcommand{\dy}{ \, {\rm d} y}
\newcommand{\dt}{ \, {\rm d} t}
\newcommand{\dv}{ \, {\rm d} v}
\newcommand{\ds}{\, {\rm d} s}
\newcommand{\dtau}{\, {\rm d} \tau}
\newcommand{\One}{\boldsymbol{1}}
\newcommand{\Denote}{\stackrel{\triangle}{=}}
\newcommand{\CalA}{{\mathcal{A}}}
\newcommand{\CalF}{{\mathcal{F}}}
\newcommand{\setF}{{\mathcal{S}}}
\newcommand{\CalM}{{\mathcal{M}}}
\newcommand{\CalP}{{\mathcal{P}}}
\newcommand{\Pc}{{\mathcal{P}_c}}
\newcommand{\CalR}{{\mathcal{R}}}
\newcommand{\Gradv}{\nabla_{\!v}}
\newcommand{\Eps}{\epsilon}
\newcommand{\Epsf}{f_\Eps}
\newcommand{\Epsrho}{\rho_\Eps}
\newcommand{\Epsu}{u_\Eps}
\newcommand{\Epsx}{x_\Eps}
\newcommand{\Epsv}{v_\Eps}
\newcommand{\EpsU}{D_\Eps}
\newcommand{\etaEps}{\xi_\Eps}
\newcommand{\tetaEps}{\tilde{\xi}_\Eps}
\newcommand{\Supp}{\text{supp}}
\newcommand{\Suppxeps}{S_\epsilon}
\newcommand{\Suppveps}{V_\epsilon}
\newcommand{\Supprho}{S_\rho}
\newcommand{\Vmax}{v_{M}}
\newcommand{\Gfluc}{I_\Eps}
\newcommand{\Rem}{G_\Eps} 
\newcommand{\Fpmap}{\mathcal{G}}
\newcommand{\rhonum}{\vec{\rho}}
\newcommand{\unum}{\vec{u}}
\newcommand{\bnum}{\vec{b}}
\newcommand{\enum}{\vec{e}}
\newcommand{\abs}[1]{\left\lvert#1\right\rvert}
\newcommand{\norm}[1]{\left\lVert#1\right\rVert}
\newcommand{\vint}[1]{\langle #1 \rangle}
\newcommand{\vpran}[1]{\left( #1 \right)}
\date{\today}
\begin{document}

\title{First-order aggregation models with alignment}

\author{R. C. Fetecau  \thanks{Department of Mathematics, Simon Fraser University, 8888 University Dr., Burnaby, BC V5A 1S6, Canada. Email: van@math.sfu.ca}
\and W. Sun \thanks{Department of Mathematics, Simon Fraser University, 8888 University Dr., Burnaby, BC V5A 1S6, Canada. Email: weirans@math.sfu.ca}
\and Changhui Tan \thanks{Department of Mathematics \& CSCAMM, University of Maryland, College Park, MD, 20742, U.S.A. Email: ctan@cscamm.umd.edu}
}
\maketitle

\begin{abstract}
We include alignment interactions in a well-studied first-order attractive-repulsive macroscopic model for aggregation. The distinctive feature of the extended model is that the equation that specifies the velocity in terms of the population density, becomes {\em implicit}, and can have non-unique solutions. 
We investigate the well-posedness of the model and show rigorously how it can be obtained as a macroscopic limit of a second-order kinetic equation.  We work within the space of probability measures with compact support and use mass transportation ideas and the characteristic method as essential tools in the analysis. A discretization procedure that parallels the analysis is formulated and implemented numerically in one and two dimensions.
\medskip

{\bf Keywords:} aggregation models; nonlocal interactions;  kinetic equations;  macroscopic limit;  mass transport;  particle methods
\end{abstract}


\section{Introduction}
\label{sect:intro}

The literature on self-organizing behaviour or swarming has grown dramatically over the last years. A variety of mathematical models has been proposed, which have origin in biological applications (e.g., self-collective behaviour seen in species such as fish, birds or insects \cite{Camazine_etal}), as well as in social sciences and engineering (e.g., opinion formation  \cite{MotschTadmor2014}, social networks \cite{Jackson2010}, robotics and space missions \cite{JiEgerstedt2007}). The main aspect is the modelling of the social interactions between the members of a group; due to such inter-individual interactions, self-organization may occur in a physical space (insect swarms, fish schools, robots) or, more abstractly, in an opinion space.

One approach in modelling aggregation is to consider individuals/organisms as point particles and design either an ordinary differential equation (ODE) or a discrete-time equation to model their evolution. Another is to formulate a partial differential equation (PDE) that governs the time evolution of the population density field. These two approaches result from the various descriptions that one can take in modelling aggregation behaviour: particle-based/microscopic or continuum/macroscopic.
We refer to \cite{CarrilloVecil2010} for a recent review of aggregation models, where in particular, it is shown how microscopic models can be related to macroscopic ones via kinetic theory. 

Three types of social interactions have been commonly considered in the literature on mathematical aggregations: attraction, repulsion, and alignment. Consequently, aggregation models can be distinguished in terms of which of these interactions are being accounted for.  Some models consider only a subset of these interactions (just attraction and repulsion \cite{M&K,BertozziCarilloLaurent,D'Orsogna_etal,BT2011} or just alignment \cite{CS2007,CFRT2010}), others account for all three of them. Models of the latter type are typically referred to as ``three-zone'' models, as each particular interaction type acts at different ranges (repulsion acts at short distances, while alignment and attraction are present at intermediate and long ranges, respectively). This class of models has had many successful applications in biological and sociological modelling  \cite{Couzin_etal, Reynolds1987,Miller_etal2012}. 

Aggregation models  (discrete or continuous) may also differ in how the velocity field is determined. There are second-order/dynamic models, typically in the form of the Newton's second law, where a differential equation for the evolution of the velocity is being provided \cite{D'Orsogna_etal, CS2007},  and first-order/kinematic models where the velocity is prescribed in terms of the spatial configuration \cite{M&K, Topaz:Bertozzi, BertozziCarilloLaurent}.  The aim of the present paper is to extend, by including alignment interactions, a first-order continuum model for aggregation that attracted a high amount of interest in recent literature  \cite{M&K,BodnarVelasquez2, BertozziLaurent, KoSuUmBe2011, LeToBe2009, FeHuKo11}. Below we  introduce the extended model and its derivation, then point out the fundamental issues that arise with such an extension, and how we address these issues in the present paper.

Consider the following continuum model for the evolution of the macroscopic density function $\rho(t,x)$ in $\Rd$: 
\begin{subequations}
\label{eq:macroscopic}
\begin{gather}
    \del_t \rho \, + \Grad \cdot (\rho u) = 0, 
\qquad \rho \big|_{t=0} = \rho_0(x), \label{eq:rho}
\\
    \Phi(t,x) u(t, x) = \int_{\R^{d}} \phi(|x-y|) \rho(t,y) u(t,y) \dy 
                             - \Grad K \ast \rho(t,x),
\label{eq:u-implicit}
\end{gather} 
\end{subequations}
where $\phi$ is an {\em influence} function that controls the alignment interactions, $\Phi = \phi \ast \rho$, and $K$ is an attractive-repulsive interaction potential. The asterisk denotes spatial convolution. Hence, the model consists in an active transport equation for the density $\rho$, with velocity field $u$ defined by \eqref{eq:u-implicit}. The coefficient $\Phi(t,x)$ in the left-hand-side of \eqref{eq:u-implicit} has the interpretation of the total influence received at location $x$ and time $t$ from the rest of the group. The right-hand-side of  \eqref{eq:u-implicit} has two terms: the first models alignment, and the second models attraction and repulsion, as detailed below.

Alignment is modelled through an averaging mechanism that allows individuals to adjust their velocities relative to the velocities of the others. Specifically, the velocity at location $x$ and time $t$ is assumed to depend non-locally on the velocities $u(t,y)$ at locations $y$ within the alignment interaction range set by the support of the influence function $\phi$. The first-term in the right-hand-side of \eqref{eq:u-implicit} captures this averaging process, with weights/interaction strengths given by $\phi(|x-y|)$, assumed to depend only on the relative distance between locations $x$ and $y$.

Attraction and repulsion are modelled by the convolution of the gradient of the interaction potential with the population density. In brief, individuals are assumed to  repel each other at short ranges, to create a comfort zone around them, but attract each other once they distance themselves too far apart. Equation \eqref{eq:rho} with the velocity field given {\em solely} by this term, i.e., $u(t,x) = -\nabla_x K \ast \rho(t,x)$, constitutes the aggregation model that has been referred to above and which the present paper generalizes.  A variety of issues has been investigated during the last decade for this explicit attractive-repulsive aggregation model, including the well-posedness of solutions \cite{BodnarVelasquez2, BertozziLaurent, BertozziLaurentRosado,CaFrFiLaSl2011}, the long-time behaviour of solutions \cite{M&K,Burger:DiFrancesco, KoSuUmBe2011, LeToBe2009, BT2011, FeHuKo11}, and the derivation of the continuum model as a mean-field limit \cite{CarrilloChoiHaurayCISM}. 

We point out an essential feature of equation \eqref{eq:u-implicit}, which is that it is an {\em implicit} equation in $u$, and can have {\em non-unique} solutions (e.g., due to translational invariance, one can add an arbitrary function of $t$ to any solution of \eqref{eq:u-implicit}, and obtain a different solution). This is a key challenge brought up by the inclusion of alignment interactions in the explicit aggregation model from \cite{M&K}. Addressing this challenge is one of the major goals of the present paper. The non-uniqueness of solutions to models of type \eqref{eq:macroscopic} has been noted
 in \cite{Miller_etal2012}, but no resolution was offered.  To our best knowledge, the present paper is the first systematic study of a {\em first-order} continuum model for aggregation that includes {\em both} attractive/repulsive and alignment interactions. 

The origin of the macroscopic model \eqref{eq:macroscopic} can be traced back to the following second-order discrete model derived from Newton's second law. Suppose there are $N$ particles in $\Rd$, whose positions and velocities  denoted by $x_i$ and $v_i$, respectively ($i=1,\dots,N$), evolve according to the following system of ODE's:
\begin{subequations}
\label{eqn:so-discrete}
\begin{align}
\frac{dx_i}{dt}&=v_i, \label{eqn:so-dxidt} \\
\Eps\frac{dv_i}{dt}&=\frac{1}{N}
\sum_{j\neq i}\phi(|x_j-x_i|)(v_j-v_i)-\frac{1}{N}\sum_{j\neq i}\nabla_{x_i} K(x_i-x_j), \label{eqn:so-dvidt}
\end{align}
\end{subequations}
for $i=1,\dots,N$. Here, it has been assumed that that all particles have the same mass $m_i=\Eps$. The functions $\phi$ and $K$ have similar meaning as in \eqref{eq:macroscopic}.

Without the attractive-repulsive term modelled by the second term in the right-hand-side of \eqref{eqn:so-dvidt}, model \eqref{eqn:so-discrete}  represents the celebrated model of Cucker and Smale \cite{CS2007}. It is well-known that for certain influence functions $\phi$, the Cucker-Smale model successfully captures the unconditional \emph{flocking} phenomenon, where individuals align their velocities into a certain asymptotic direction \cite{HT2008, HL2009, CFRT2010}. Comprehensive literature also exists on second-order attractive-repulsive models without alignment \cite{D'Orsogna_etal, Chuang_etal, CarrilloVecil2010}. Second-order models with {\em both} alignment and attraction/repulsion have also been studied  \cite{CD2010, LiLuKe2008, CCR2011, AgIlRi2011}, though in not as much depth and detail as the models with the two sets of forces considered separately.

The passage from the second-order discrete model  \eqref{eqn:so-discrete} to the first-order macroscopic model \eqref{eq:macroscopic} is done in two steps. First, for each fixed $\Eps > 0$, one can take the limit $N\to \infty$ in  \eqref{eqn:so-discrete} and reach,  by BBGKY hierarchies or mean field limits (see e.g., \cite{HT2008} and the review in \cite{CarrilloVecil2010}), the following kinetic equation for the density $f(t,x,v)$ at position $x \in \Rd$ and velocity $v \in \Rd$:
\begin{subequations}
\label{eq:kinetic}
\begin{gather} 
   \del_t \Epsf + v \cdot \Grad \Epsf 
   = \frac{1}{\Eps} \Gradv \cdot \vpran{F[\Epsf] \Epsf},
\qquad 
   \Epsf \big|_{t=0} = f_0(x, v), \label{eq:f-Eps} 
   \\
  F[\Epsf] 
    = \int_{\R^{2d}} \phi(|x-y|) (v - v^\ast) \Epsf(t, y, v^\ast) \dy\dv^\ast
       +\int_{\R^{2d}}\Gradx K(x-y) \Epsf(t,y,v^\ast)dydv^\ast. \label{eq:FfEps}  
\end{gather}
\end{subequations}
Rigorous derivations of mean field limits starting from particle systems is a classical subject, comprising an extensive body of  works. Some of the most recent works include the mean field limit of the Cucker-Smale model \cite{HL2009, CFRT2010}, as well as extensions of these results to include general aggregation models of the form \eqref{eqn:so-discrete} \cite{CCR2011}. 

The second step in passing from \eqref{eqn:so-discrete} to \eqref{eq:macroscopic} is to send $\Eps \to 0$ in the kinetic equation \eqref{eq:kinetic} and derive \eqref{eq:macroscopic} as a hydrodynamic limit. The rigorous treatment of this limit constitutes in fact a major component of the present work. Passage from kinetic to macroscopic equations is also a vast topic, extensively studied for instance in the context of hydrodynamic limits of the nonlinear Botlzmann equations. It is beyond the scope of this introduction to give a detailed account of this well-established research area, we simply refer here to a recent review paper \cite{SR2014} and the references therein.

As indicated above, a major issue that arises when one considers the first-order model \eqref{eq:macroscopic} is the {\em non-uniqueness} of solutions to \eqref{eq:u-implicit}. In the present paper we resolve this non-uniqueness issue for the case when the interaction potential is {\em symmetric} about the origin, i.e., it satisfies
\begin{equation}
\label{eqn:Ksymmetric}
K(x) = K(-x), \qquad \text{ for all } x \in \Rd.
\end{equation} 
For symmetric potentials, the ODE system \eqref{eqn:so-discrete} and the kinetic equation \eqref{eq:kinetic} conserve 
the linear momentum: $\sum_{i=1}^N v_i$ and $\int_{\R^{2d}}v\Epsf(t,x,v) \dx \dv$, respectively. In terms of macroscopic variables, $\int_{\R^d} \Epsrho \Epsu \dx$ remains constant through the evolution of \eqref{eq:kinetic}, where $\Epsrho$ and $ \Epsu$ are defined by:
\[
\Epsrho(t,x) = \int_{\R^d}\Epsf(t,x,v) \dv,\quad
\Epsrho(t,x)\Epsu(t,x) = \int_{\R^d}v\Epsf(t,x,v) \dv.
\]
The key idea for dealing with the non-uniqueness of solutions to \eqref{eq:u-implicit} is to account for the fact that~\eqref{eq:macroscopic}  is the limit of a second-order model for which linear momentum conservation holds and hence, as a limiting equation, \eqref{eq:macroscopic} should inherit this momentum conservation as well. 

Given the considerations above, we append model \eqref{eq:macroscopic} with the constraint/requirement that the linear momentum remains constant through the time evolution. With no loss of generality, we assume that the linear momentum is zero at the initial time and hence, it has to remain zero for all times.  For ease of referencing, we list below the macroscopic model \eqref{eq:macroscopic} together with momentum conservation:
\begin{subequations}
\begin{gather}
    \del_t \rho \, + \Grad \cdot (\rho u) = 0, 
\qquad \rho \big|_{t=0} = \rho_0(x), \label{eq:rho-full}
\\
    \Phi(t,x) u(t, x) = \int_{\R^{d}} \phi(|x-y|) \rho(t,y) u(t,y) \dy 
                             - \Grad K \ast \rho(t,x),
\label{eq:u-implicit-full}
\\
   \int_{\R^d} \rho \, u \dx = 0. \label{eq:conserv-momentum}
\end{gather} 
\label{eq:macroscopic-full}
\end{subequations}
System \eqref{eq:macroscopic-full} is the main object of study of this paper.

The layout of the present work is as follows. In Section \ref{sect:well-posedness} we establish a well-posedness theory for system \eqref{eq:macroscopic-full}. Solutions are sought in the space $\CalP_c(\Rd)$ of probability measures  with compact support, using the mass transportation framework developed in \cite{CCR2011}. The main difficulty is to obtain a Lipschitz bound on $u$, given that for each $\rho$, \eqref{eq:u-implicit-full} provides multiple solutions. The momentum conservation condition \eqref{eq:conserv-momentum} plays an essential role, as it enforces uniqueness. The well-posedness result is stated in Theorem \ref{thm:macro}.

Another major issue of interest investigated in detail in this paper is the zero-inertia limit of solutions of the kinetic model \eqref{eq:kinetic}. We use again the framework from \cite{CCR2011} and consider measure-valued solutions $\Epsf$ of \eqref{eq:kinetic} in the mass transportation sense. For this reason, the method of characteristics plays a major role throughout our analysis. In Section \ref{sect:uniform-bounds} we establish uniform bounds in $\Eps$ of solutions to \eqref{eq:kinetic}. We note in particular the key bound on the second-order fluctuation term established by Theorem \ref{thm:G-2}. The uniform bounds enable us to derive the main convergence results in Section \ref{sect:limit}. Specifically, we show that $\Epsf$ converges weak-$^*$ as measures to $ f(t,x,v) = \rho(t,x) \, \delta(v - u(t, x))$ as $\Eps \to 0$, where $\rho$ (and $u$) represent the unique solution to the macroscopic model \eqref{eq:macroscopic-full}. The precise statements are presented in Theorems \ref{thm:limit} and \ref{thm:rho-trans}. We also point out the convergence of characteristics paths established by Theorem  \ref{thm:conv-traj}, where the subtlety resides in the singular limit, with characteristics of a second-order system collapsing into first-order characteristics.

Finally, in Section \ref{sect:numerics} we design and implement a numerical procedure for solving \eqref{eq:macroscopic}. The numerical method is rooted in the analytical considerations made for the continuum model, as a discrete analogue of the momentum conservation is enforced in order to obtain unique solutions for the discretization of \eqref{eq:u-implicit-full}. To test our method, we implement and illustrate it in one and two dimensions, for several choices of interaction kernels $K$ for which we know the exact steady states.

We note that for models that include only attraction and repulsion, the analogous zero inertia limits $\Eps \to 0$ of second-order models\footnote{We refer to the kinetic model \eqref{eq:kinetic} as being ``second-order" since it is based on Newton's second law \eqref{eqn:so-discrete}; in strict terms, equation \eqref{eq:kinetic} is a first-order PDE.} such as \eqref{eqn:so-discrete} and \eqref{eq:kinetic}, have been investigated recently in \cite{FS2014} (an earlier study, restricted to PDE analysis, was done in \cite{Jabin2000}). As in \cite{FS2014}, the focus in the present work is to find uniform (in $\Eps$) bounds on the size of the support of the solutions $\Epsf$, as well as to show the vanishing (with $\Eps$) of a second-order fluctuation term. By adding the alignment term such estimates become significantly more involved, as they require the use of the momentum conservation in \eqref{eq:kinetic}, as well as the coupling with control on high order moments of $\Epsf$.

Finally, we point out that  we work in this paper with smooth interaction potentials and influence functions. Specifically, we assume $\nabla K \in W^{1,\infty}(\Rd)$ and $\phi \in W^{1,\infty}(\R^+)$, where $\phi$ is also assumed to be positive, non-increasing, and to decay sufficiently slow at infinity. All these assumptions are used in an essential way in our analysis, in particular to control (uniformly in $\Eps$)  the support of the solutions to \eqref{eq:kinetic} and the second-order fluctuation term. Removing (some of) these requirements would require a significantly different approach and analysis than the present one.  We comment however that, from the point of view of applications, the smoothness requirement is mostly irrelevant, as in numerical simulations one does not normally distinguish between a singular potential and its smooth regularization. 

\section{Well-posedness of the macroscopic equation}
\label{sect:well-posedness}

In this section, we establish a well-posedness theory for the macroscopic equation ~\eqref{eq:macroscopic-full}.
Solutions $\rho$ are sought  in the space $C([0,T];\Pc(\Rd))$, where $\Pc(\Rd)$ denotes the space of probability measures with compact support in $\Rd$. We use the measure-transportation framework from \cite{CCR2011} and consider the characteristic equations associated to \eqref{eq:rho-full}, given by 
\begin{align} \label{eq:char-limit}
    \frac{\rm d}{\dt} x = u[\rho](t, x(t)), 
\qquad
    x(0) = x_0.
\end{align}
In order to specify the solution space for~\eqref{eq:macroscopic},  we use the concept of flow maps: suppose $\rho$ is a fixed function and the vector field $u [\rho](t, x)$ is locally Lipschitz in $x$. Then standard ODE theory provides the local existence of the flow map $\mathcal{T}^{t}_{u[\rho]}$ associated to \eqref{eq:char-limit}:
 \[
 x_0 \xrightarrow{\mathcal{T}^{t}_{u[\rho]}} x(t),
 \]
 where $x(t)$ is the unique solution of \eqref{eq:char-limit} that starts at $x_0$. 

\begin{Definition}
\label{defn:rho-transp}
Let $T>0$ be fixed. A measure-valued function $\rho \in C([0, T]; \Pc (\Rd))$ is said to be a solution of the macroscopic equation \eqref{eq:macroscopic-full} with initial condition $\rho_0 \in \Pc (\Rd)$ if
\begin{equation}
\label{eqn:rho-transp}
\rho(t) = \mathcal{T}^{t}_{u[\rho]} \# \rho_0,
\end{equation}
where at each $t \in [0,T]$, the velocity field $u[\rho](t, \cdot) \in W^{1, \infty}(\Supp\rho)$ satisfies \eqref{eq:u-implicit-full} and \eqref{eq:conserv-momentum}.
\end{Definition}
The push-forward in Definition \ref{defn:rho-transp} is taken in the mass transportation sense; equation \eqref{eqn:rho-transp} is equivalent to:
\[
\int_{\Rd} \varphi(x) \rho(t,x) dx = \int_{\Rd} \varphi ( \mathcal{T}^{t}_{u[\rho]}(X)) \rho_0(X) dX,
\]
for all $\varphi \in C_b(\Rd)$. 

The results in \cite{CCR2011} establish the existence and uniqueness of measure solutions for various aggregation models via fixed point arguments. These results apply in particular to attractive-repulsive models in the form \eqref{eq:rho-full}, but there the velocity field $u$ is given {\em explicitly} in terms of $\rho$. In \cite{CCR2011}, the assumptions made on the attractive-repulsive potential $K$ guarantee the local Lipschitz continuity of the velocity field $u$ and hence, the global existence and uniqueness of characteristics paths.

In our model, $u$  is given {\em implicitly} by \eqref{eq:u-implicit-full} and \eqref{eq:conserv-momentum}. While \eqref{eq:u-implicit-full} can have multiple solutions (as discussed above),  we show that the momentum conservation property \eqref{eq:conserv-momentum} enforces uniqueness. Once a unique solution $u[\rho]$ of \eqref{eq:u-implicit-full}-\eqref{eq:conserv-momentum} has been identified, this solution is shown to be Lipschitz continuous in the spatial variable so that the characteristic equation is well-posed. 
\medskip

We now establish the necessary a priori bounds for $(\rho, u)$ for deriving the well-posedness of solutions to \eqref{eq:macroscopic-full}. First we show the Lipschitz bound of $u$ for a fixed density function $\rho$. 
Similar estimates have also been used in \cite{MT2011, TT2014} for flocking models.
\begin{Proposition} \label{Prop:2-1} Assume $K$ is symmetric (i.e., it satisfies \eqref{eqn:Ksymmetric}), $ \Grad K \in W^{1, \infty}(\Rd)$ and $\phi \in W^{1,\infty}(\mathbb{R}^+)$ is positive and non-increasing. Consider a given probability measure $\rho(t,\cdot) \in \CalP_c(\R^d)$ at some fixed time $t$. 
Then equations~\eqref{eq:u-implicit-full}-\eqref{eq:conserv-momentum} uniquely define a bounded, Lipschitz continuous velocity field $u$ on the support of $\rho$. 
\end{Proposition}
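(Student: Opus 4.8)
\noindent\emph{Proposed proof.} Fix the time $t$, write $\rho=\rho(t,\cdot)$ and $\Phi=\phi\ast\rho$, let $D<\infty$ be the diameter of $\Supp\rho$, and set $\phi_m:=\phi(D)>0$ (here positivity of $\phi$ is used). Since $\phi$ is non-increasing, $\phi(|x-y|)\ge\phi_m$ whenever $x,y\in\Supp\rho$, so $\Phi\ge\phi_m>0$ on $\Supp\rho$. In this notation \eqref{eq:u-implicit-full} reads $\Phi u=\phi\ast(\rho u)-\Grad K\ast\rho$, and the linear map $u\mapsto\Phi u-\phi\ast(\rho u)$ annihilates constant vector fields, which is the precise source of the non-uniqueness. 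My plan is to solve the problem on the closed subspace
\[
  H_0:=\Big\{u\in L^2(\ud\rho;\Rd)\ :\ \int_{\Rd}u\,\ud\rho=0\Big\}
\]
of $L^2(\ud\rho;\Rd)$ — which is exactly the constraint \eqref{eq:conserv-momentum} — and then to bootstrap regularity.

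For existence I would apply the Lax--Milgram theorem to the symmetric bilinear form
\[
  a(u,v):=\tfrac12\int_{\R^{2d}}\phi(|x-y|)\,\big(u(x)-u(y)\big)\cdot\big(v(x)-v(y)\big)\,\ud\rho(x)\,\ud\rho(y)
\]
and the bounded linear functional $\ell(v):=-\int_{\Rd}(\Grad K\ast\rho)\cdot v\,\ud\rho$ on $H_0$. Expanding the scalar product and using that $\phi(|x-y|)$ is symmetric in $(x,y)$ yields the identity $a(u,v)=\int_{\Rd}\big(\Phi u-\phi\ast(\rho u)\big)\cdot v\,\ud\rho$. Boundedness of $a$ is immediate ($\phi$ is bounded, $\rho$ has unit mass). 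The key point is coercivity: for $u\in H_0$, using $\phi(|x-y|)\ge\phi_m$ on $\Supp\rho\times\Supp\rho$ and $\int u\,\ud\rho=0$,
\[
  a(u,u)\ \ge\ \frac{\phi_m}{2}\int_{\R^{2d}}|u(x)-u(y)|^2\,\ud\rho(x)\,\ud\rho(y)\ =\ \phi_m\int_{\Rd}|u|^2\,\ud\rho .
\]
Lax--Milgram then produces a unique $u\in H_0$ with $a(u,v)=\ell(v)$ for all $v\in H_0$; that is, $g:=\Phi u-\phi\ast(\rho u)+\Grad K\ast\rho$ satisfies $\int_{\Rd}g\cdot v\,\ud\rho=0$ for all $v\in H_0$, so $g$ equals a constant vector $\lambda$ $\rho$-a.e. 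To finish, I would show $\lambda=0$ by integrating $g\equiv\lambda$ against $\ud\rho$: the $\Phi u$ and $\phi\ast(\rho u)$ contributions cancel (again by symmetry of $\phi(|x-y|)$), and $\int_{\Rd}\Grad K\ast\rho\,\ud\rho=\int_{\R^{2d}}\Grad K(x-y)\,\ud\rho(x)\,\ud\rho(y)=0$ since $\Grad K$ is odd by \eqref{eqn:Ksymmetric}. Hence $u$ solves \eqref{eq:u-implicit-full}, and $u\in H_0$ gives \eqref{eq:conserv-momentum}.

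Uniqueness is then immediate: the difference $w$ of two solutions of \eqref{eq:u-implicit-full}--\eqref{eq:conserv-momentum} lies in $H_0$ and satisfies $\Phi w=\phi\ast(\rho w)$, so $\phi_m\int|w|^2\,\ud\rho\le a(w,w)=\int(\Phi w-\phi\ast(\rho w))\cdot w\,\ud\rho=0$, forcing $w=0$ $\rho$-a.e. For the regularity claim I would write the unique solution as $u=\Phi^{-1}\big(\phi\ast(\rho u)-\Grad K\ast\rho\big)$ on $\Supp\rho$: since $u\in L^1(\ud\rho)$ and $\rho$ is compactly supported, $\phi\ast(\rho u)$ and $\Grad K\ast\rho$ are bounded and Lipschitz on $\Rd$, with Lipschitz constants controlled by $\|\phi'\|_{L^\infty}\|u\|_{L^1(\ud\rho)}$ and by the $W^{1,\infty}$-norm of $\Grad K$ respectively; and $\Phi$ is bounded, Lipschitz, and bounded below by $\phi_m$ on $\Supp\rho$, so $\Phi^{-1}$ is bounded and Lipschitz there. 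Since a product of bounded Lipschitz functions on the compact set $\Supp\rho$ is again bounded and Lipschitz, we obtain $u\in W^{1,\infty}(\Supp\rho)$, as claimed.

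The main obstacle, I expect, is the existence step: the operator $u\mapsto\Phi u-\phi\ast(\rho u)$ has a nontrivial kernel (the constants), so a naive fixed-point/contraction argument on $L^2(\ud\rho)$ cannot work. Passing to the mean-zero subspace $H_0$ removes the kernel, and positivity of $\phi$ then buys coercivity; but one still has to verify that the data are compatible, i.e.\ that the multiplier $\lambda$ vanishes — and this is precisely where the symmetry assumption \eqref{eqn:Ksymmetric} on $K$ enters. The regularity bootstrap, by contrast, is routine once the $L^2(\ud\rho)$ solution is in hand.
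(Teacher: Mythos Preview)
Your proof is correct and takes a genuinely different route from the paper's. The paper works directly in $L^\infty(\Supp\rho)$: it replaces the degenerate operator $\CalA[u]=\Phi u-\phi\ast(\rho u)$ by the perturbed operator $\CalM[u]=\CalA[u]+\eta\int u\,\ud\rho$ (with $\eta=\phi(2\Supprho)$), observes that any solution of \eqref{eq:u-implicit-full}--\eqref{eq:conserv-momentum} also solves $\CalM[u]=-\Grad K\ast\rho$, and shows $\CalM$ is invertible on $L^\infty$ because its ``off-diagonal'' part $u\mapsto\Phi^{-1}\!\int(\phi-\eta)\rho u$ is a strict contraction; this yields the sharp bound $\|u\|_{L^\infty}\le\eta^{-1}\|\Grad K\|_{L^\infty}$ directly, which the paper uses downstream. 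You instead work variationally in $L^2(\ud\rho)$, restricting to the mean-zero subspace $H_0$ where the symmetric form $a(u,v)=\tfrac12\iint\phi(|x-y|)(u(x)-u(y))\cdot(v(x)-v(y))\,\ud\rho\,\ud\rho$ is coercive, apply Lax--Milgram, and then check that the resulting Lagrange multiplier vanishes---this last step being exactly where the oddness of $\Grad K$ enters, just as in the paper. Your approach makes the symmetric ``graph-Laplacian'' structure of the alignment operator very transparent and is arguably more conceptual; the paper's $L^\infty$ argument is more direct and delivers the explicit $L^\infty$ constant in one stroke, whereas your bootstrap from $L^2(\ud\rho)$ to $W^{1,\infty}$ gives a slightly less sharp constant unless one repeats the paper's maximum-point trick after the fact.
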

\begin{proof}
Since equations (1.5b)-(1.5c) hold at a specific fixed time, we simplify the notation, and drop the time dependence in the proof. Denote
\begin{equation}
\label{eqn:supprho}
\Supprho = \max_{x \in  \Supp \rho} |x|.
\end{equation}
By our assumption on $\rho$, $\Supprho$ is finite. 
The following lower bound on the influence function $\phi$ can be inferred immediately:
\[\phi(|x-y|)\geq \eta \qquad\text{for
  any } x, y \in \Supp\rho,\]
where
\begin{equation}
\label{eqn:etadef}
\eta:=\phi(2\Supprho)>0.
\end{equation}
As a consequence, $\Phi(x)$ is bounded from below. Indeed, for any $x\in \Supp \rho$,
\begin{equation}\label{eq:Philow}
 \Phi(x) = \int_{\Supp\rho} \phi(|x-y|) \rho(y) \dy
                \geq \eta.
\end{equation}
Moreover, we have
\[\Phi(x) \leq \norm{\phi}_{L^\infty(0, \infty)}, \qquad \text{for any $x \in \R^d$.}\]

Define the following alignment operator $\CalA$:
\begin{equation}\label{eq:calA}
    \CalA[u](x) = \Phi(x) u(x) - \int_{\R^{d}} \phi(|x-y|) \rho(y) u(y) \dy.
\end{equation}
It is easy to check that $\CalA$ is a linear bounded operator which maps
$L^\infty(\Supp\rho)$ to $L^\infty(\Supp\rho)$.

Equation \eqref{eq:u-implicit-full} can now be expressed as
\[\CalA[u](x)=-\Gradx K\ast\rho(x).\]
As commented above, the operator $\CalA$ is {\em not} invertible since $\CalA$ maps all
constant (in $x$) functions to zero.

To overcome the degeneracy of $\CalA$, we make use of the momentum conservation condition
\eqref{eq:conserv-momentum} and define a new operator $\CalM$:
\begin{equation}\label{eq:calM}
\begin{aligned}
    \CalM[u](x) :&= \CalA[u](x)+\eta\int_{\R^d}\rho(y)u(y)dy \\
    &=\Phi(x)  u(x) - \int_{\R^{d}} (\phi(|x-y|)-\eta) \rho(y) u(y) \dy.
\end{aligned}
\end{equation}
We note that the operators $\CalA$ and $\CalM$ depend on $\rho$; however, we choose not to indicate this dependence explicitly in the notations, as this fact is irrelevant for the main well-posedness result that follows.

A solution $u$ of \eqref{eq:u-implicit-full} and \eqref{eq:conserv-momentum} also satisfies
\begin{equation}
\label{eqn:calM}
\CalM[u](x)=-\Gradx K\ast\rho(x).
\end{equation}
We claim that $\CalM$ is invertible. 
Indeed, given any $x\in\Supp\rho$,
\begin{align*}
\int_{\R^{d}} (\phi(|x-y|)-\eta) \rho(y) u(y) \dy~\leq&~
\|u\|_{L^\infty(\Supp\rho)}\int_{\Supp\rho}(\phi(|x-y|)-\eta)\rho(y)dy\\
=&~\|u\|_{L^\infty(\Supp\rho)}(\Phi(x)-\eta).
\end{align*}
By the definition of $\CalM$ in~\eqref{eq:calM}, one has
\[\CalM[u](x)\geq\Phi(x)u(x)-\|u\|_{L^\infty(\Supp\rho)}(\Phi(x)-\eta)
\qquad \quad \text{for all $x\in \Supp\rho$}.\]
For any $\delta > 0$, there exists $x_{\delta} \in \Supp\rho$ such that $u(x_\delta) \geq \| u\|_{L^\infty(\Supp\rho)} - \delta$ and hence,
\begin{align*}
    \CalM[u] (x_\delta) 
&\geq  
   \Phi(x_\delta) \|u\|_{L^\infty(\Supp\rho)} 
   - \|u\|_{L^\infty(\Supp\rho)}(\Phi(x_\delta)-\eta)
   - \delta \norm{\phi}_{L^\infty(0, \infty)}
\\
& = \eta \norm{u}_{L^\infty(\Supp\rho)} - \delta \norm{\phi}_{L^\infty(0, \infty)}.
\end{align*}
By letting $\delta \to 0$ we obtain
\begin{equation}\label{eq:CalMbound}
\|\CalM[u]\|_{L^\infty(\Supp\rho)}\geq\eta\|u\|_{L^\infty(\Supp\rho)}.
\end{equation} 
Hence for a given $\rho$, $\CalM$ is invertible on $L^\infty(\Supp\rho)$ and $u$ is uniquely solvable from \eqref{eqn:calM} such that
\begin{equation}\label{eq:ueq}
u(x)=(u[\rho])(x)=-\CalM^{-1}\left(\Gradx K\ast\rho(x)\right).
\end{equation}

Next we show that $u \in W^{1, \infty}(\Supp \rho)$ and find an explicit bound on its  $W^{1, \infty}$-norm. We start with the $L^\infty$ estimate. From \eqref{eq:CalMbound} and
\eqref{eq:ueq}, we have
\begin{align} \label{bound:u-1}
   \norm{u}_{L^\infty(\Supp\rho)} 
   \leq\|\CalM^{-1}\|~\|\Gradx
  K\ast\rho\|_{L^\infty(\Supp\rho)}\leq\frac{1}{\eta}\|\Gradx K\|_{L^\infty}.
\end{align}
Next, we estimate the bound of $\Gradx u$. The distributional derivative of $u$ satisfies
\begin{align*}
\Gradx u=\int_{\R^d}\Gradx \left(\frac{\phi(|x-y|)}{\Phi(x)}\right)\rho(y)u(y)dy
 +\Gradx\left(\frac{\Gradx K}{\Phi}\right).
\end{align*}
Applying the lower bound estimate \eqref{eq:Philow} for $\Phi$ and the
$L^\infty$ estimate \eqref{bound:u-1} for $u$
in each term gives
\begin{align*}
\left\|\int_{\R^d}\Gradx
 \left(\frac{\phi(|x-y|)}{\Phi(x)}\right)\rho(y)u(y)dy\right\|_{L^\infty(\Supp\rho)}
~\leq~ \frac{2}{\eta^2}\|\phi'\|_{L^\infty}\|\Gradx
  K\|_{L^\infty},
\end{align*}
\begin{align*}
\left\|\Gradx\left(\frac{\Gradx K}{\Phi}\right)\right\|_{L^\infty(\Supp\rho)}
\leq~\frac{1}{\eta^2}\|\phi'\|_{L^\infty}\|\Gradx
K\|_{L^\infty}+\frac{1}{\eta}\|\Gradx^2K\|_{L^\infty}.
\end{align*}
This shows 
\begin{align}\label{bound:Grad-u}
\|\Gradx u\|_{L^\infty(\Supp\rho)}\leq
\frac{3}{\eta^2}\|\phi'\|_{L^\infty}\|\Gradx
K\|_{L^\infty}+\frac{1}{\eta}\|\Gradx^2K\|_{L^\infty}.
\end{align}
Combining~\eqref{bound:u-1} with~\eqref{bound:Grad-u}, we have
\begin{align} \label{bound:Lip-u}
    \norm{u}_{W^{1, \infty}(\Supp\rho)}
\leq
    \vpran{\frac{1}{\eta} + \frac{3}{\eta^2} \norm{\phi'}_{L^\infty(0, \infty)}}
       \norm{\Grad K}_{L^\infty}
       + \frac{1}{\eta} \norm{\Grad^2 K}_{L^\infty} \,,
\end{align}
with $\eta$ defined in \eqref{eqn:etadef}.
\end{proof}

\begin{rmk}
\label{remark:outside} 
The values of the velocity field $u$ outside the support of $\rho$ do not explicitly enter in the dynamics of 
model \eqref{eq:macroscopic-full}. However,  we can extend the definition of $u$ outside of $\Supp\rho$ by using \eqref{eq:u-implicit-full}. This extension is applied in the proof  of Theorem \ref{thm:macro} where a $W^{1, \infty}$-bound on $u$ is used in a domain that is larger than $\Supp \rho$. In particular, we note that the right-hand-side of \eqref{eq:u-implicit-full} only depends on values of $\rho$ and $u$ {\em inside} the support of $\rho$. For any $R>0$ such that $\Supp \rho \subset B_R$ and $x \in B_R \setminus \Supp \rho$, $\Phi(x)$ is bounded from below by $\phi(2R)$ (see \eqref{eq:Philow}) and the assumptions made on $K$ and $\phi$ along with the bound \eqref{bound:Lip-u} immediately imply a bound on  $\norm{u}_{W^{1, \infty}(B_R)}$ that depends on $R$, $\norm{\Grad K}_{W^{1,\infty}}$ and $\norm{\phi}_{W^{1,\infty}}$ This bound has a similar expression as in \eqref{bound:Lip-u}, but with $\eta$ replaced by $\phi(2R)$. 
\end{rmk}

As discussed below, Proposition~\ref{Prop:2-1} combined with the fixed-point argument developed in \cite{CCR2011} is enough to conclude the {\em local} existence of solutions to \eqref{eq:macroscopic-full} in the measure-transportation sense of Definition \ref{defn:rho-transp}. In order to show the {\em global} existence, we need to prove {\em a priori} that the support of $\rho$ does not become infinite in a finite time. This requires an additional assumption on the influence function $\phi$. The precise statement is as follows:

\begin{Proposition} 
\label{Prop:2-2}
Assume $K$ and $\rho$ satisfy the hypotheses of Proposition \ref{Prop:2-1}, together with the following slow decay condition on the influence function $\phi$:
\begin{align} \label{cond:phi}
    \int_0^\infty \phi(r) \dr = \infty.
\end{align}
Let $(\rho, u)$ be a solution to ~\eqref{eq:macroscopic-full} with compactly supported initial density $\rho_0$.  Then for any $t>0$, we have
\begin{itemize}
\item [(a)] $\rho(t, \cdot)$ remains compactly supported and the bound of its support satisfies 
\begin{equation}
\label{eqn:supprho-est}
   \Supprho(t) \leq \frac{1}{2} \Psi^{-1} \vpran{\Psi (2 \Supprho(0)) + 2t \norm{\Grad K}_{L^\infty}},
\end{equation}
where $\Supprho(t)$ is defined in~\eqref{eqn:supprho} 
and $\Psi(r) = \int_0^r \phi(s) \ds$.  \smallskip

\item [(b)] $u\in W^{1,\infty}(\Supp\rho)$ and \eqref{bound:Lip-u} holds
with 
\[\eta(t)=\phi(\Psi^{-1}(\Psi (2 \Supprho(0)) + 2t \norm{\Grad K}_{L^\infty})).\]
\end{itemize}
\end{Proposition}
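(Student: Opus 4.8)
The plan is to follow the support of $\rho(t)$ along the characteristic flow and to close a differential inequality using the $L^\infty$-bound on $u$ from Proposition~\ref{Prop:2-1}, which ties the size of $u$ to the current size of the support through the coefficient $\eta=\phi(2\Supprho)$. Since $\rho(t)=\mathcal{T}^{t}_{u[\rho]}\#\rho_0$, we have $\Supp\rho(t)=\mathcal{T}^{t}_{u[\rho]}(\Supp\rho_0)$, so that $\Supprho(t)=\sup\{\,|x(t;x_0)| : x_0\in\Supp\rho_0\,\}$, where $x(\cdot\,;x_0)$ denotes the solution of the characteristic equation~\eqref{eq:char-limit}. For each such characteristic,
\[
   \frac{\rm d}{{\rm d}t}\,|x(t;x_0)| \;\le\; \bigl|u[\rho](t,x(t;x_0))\bigr| \;\le\; \norm{u[\rho](t,\cdot)}_{L^\infty(\Supp\rho(t))},
\]
because $x(t;x_0)\in\Supp\rho(t)$, and by~\eqref{bound:u-1} applied at time $t$ the right-hand side is at most $\norm{\Grad K}_{L^\infty}/\phi(2\Supprho(t))$. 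Fixing $t$, choosing $x_0^\ast\in\Supp\rho_0$ that realizes the maximum of $|x(t;\cdot)|$ (it exists, the flow being continuous and $\Supp\rho_0$ compact), and integrating along its trajectory between times $s<t$, one gets
\[
   \Supprho(t) - \Supprho(s) \;\le\; \int_s^t \frac{\norm{\Grad K}_{L^\infty}}{\phi(2\Supprho(\tau))}\,\mathrm{d}\tau .
\]
As long as $\Supprho$ is finite, $\phi(2\Supprho)$ stays bounded away from $0$ ($\phi$ being positive and non-increasing), so this shows $\Supprho$ is locally Lipschitz, and hence for a.e.\ $t$, $\frac{\rm d}{{\rm d}t}\Supprho(t)\le \norm{\Grad K}_{L^\infty}/\phi(2\Supprho(t))$.

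Multiplying by $\phi(2\Supprho(t))>0$ and noting that the left-hand side equals $\tfrac12\frac{\rm d}{{\rm d}t}\Psi(2\Supprho(t))$ (chain rule, with $\Psi'=\phi$ and $\Supprho$ Lipschitz), then integrating in time, yields $\Psi(2\Supprho(t))\le\Psi(2\Supprho(0))+2t\norm{\Grad K}_{L^\infty}$. The slow-decay hypothesis~\eqref{cond:phi} is precisely what makes $\Psi$ a continuous, strictly increasing bijection of $[0,\infty)$ onto itself, so $\Psi^{-1}$ is defined on all of $[0,\infty)$ and is increasing; applying it gives the estimate~\eqref{eqn:supprho-est} of part~(a). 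In particular $\Supprho(t)$ is finite for every $t$, so the support cannot escape to infinity in finite time. Part~(b) then follows at once from Proposition~\ref{Prop:2-1} at time $t$: it yields $u(t,\cdot)\in W^{1,\infty}(\Supp\rho)$ with the bound~\eqref{bound:Lip-u} in which $\eta$ is the current value $\phi(2\Supprho(t))$; since $\phi$ is non-increasing and $2\Supprho(t)\le\Psi^{-1}(\Psi(2\Supprho(0))+2t\norm{\Grad K}_{L^\infty})$ by part~(a), we get $\phi(2\Supprho(t))\ge\eta(t)$ for $\eta(t)$ as in the statement, and since the right-hand side of~\eqref{bound:Lip-u} is decreasing in $\eta$, the bound persists with $\eta(t)$.

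The only point requiring care — more bookkeeping than a genuine obstacle — is the apparent circularity: to speak of $\Supp\rho(t)$ and of the flow one already needs the solution to exist, yet it is the finiteness of the support that allows continuation. This is handled by the standard continuation argument: one runs the estimate on the maximal interval of existence provided by the local theory (Proposition~\ref{Prop:2-1} together with the fixed-point argument of~\cite{CCR2011}); since the a priori bound~\eqref{eqn:supprho-est} is finite for every $t$, that interval must be $[0,\infty)$. A secondary technical point is justifying the differential inequality for the non-smooth function $\Supprho(t)$, which is why I pass through the integral inequality above together with the compactness argument for the maximizing characteristic, rather than differentiating the supremum directly.
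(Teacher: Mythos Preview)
Your proof is correct and follows essentially the same approach as the paper: bound the characteristic speed by $\norm{\Grad K}_{L^\infty}/\phi(2\Supprho(t))$ via~\eqref{bound:u-1}, derive the differential inequality $\frac{\rm d}{{\rm d}t}\Psi(2\Supprho(t))\le 2\norm{\Grad K}_{L^\infty}$, and invert $\Psi$ using the slow-decay condition. The only technical difference is that the paper avoids the issue of differentiating the supremum $\Supprho(t)$ by introducing the manifestly $C^1$ majorant $D(t)=\Supprho(0)+\int_0^t\norm{u(\tau,\cdot)}_{L^\infty(\Supp\rho(\tau))}\,\mathrm{d}\tau\ge \Supprho(t)$ and running the same differential inequality for $D$ instead, which sidesteps the Lipschitz/absolute-continuity bookkeeping you flag at the end.
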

\begin{proof}
First, from~\eqref{bound:u-1} we know that the $L^\infty$-bound of $u$ depends on $\Supprho$. Specifically, 
\begin{align} \label{bound:u-2}
   \norm{u(t, \cdot)}_{L^\infty(\Supp\rho(t))} 
\leq  
    \frac{1}{\phi(2\Supprho(t))} \norm{\Grad K}_{L^\infty}
\end{align}
for all $t$ in a local interval of existence $[0, T]$. On the other hand, for any characteristic path $x(t)$  originating at $x_0 \in\Supp\rho_0$ (see \eqref{eq:char-limit}), we have
\[
\frac{d}{dt}|x(t)|\leq \|u(t,\cdot)\|_{L^\infty(\Supp\rho(t))}.
\]
Hence $\Supprho$ is bounded in terms of $u$ as
\begin{align*}
    \Supprho (t)  \leq \Supprho(0) + \int_0^t \norm{u(\tau,
  \cdot)}_{L^\infty(\Supp\rho(\tau))} \dtau.
\end{align*}
We can now close the loop between $\Supprho$ and $u$. To this end, denote
\begin{align*}
     D(t) = \Supprho(0) + \int_0^t \norm{u(\tau, \cdot)}_{L^\infty(\Supp\rho(\tau))} \dtau.
\end{align*}
Then $\Supprho(t) \leq D(t)$. By \eqref{bound:u-2} and the monotonicity of $\phi$,
\begin{align*}
     D'(t) = \|u(t,\cdot)\|_{L^\infty(\Supp\rho(t))}\leq
  \frac{1}{\phi(2 D(t))} \norm{\Grad K}_{L^\infty},
\end{align*}
which implies
\begin{align*}
    \frac{d}{dt} \Psi (2D(t)) \leq 2 \norm{\Grad K}_{L^\infty}.
\end{align*}
By integrating the equation above, we find
\begin{equation}
\label{eqn:Psi-est}
    \Psi(2 D(t)) \leq \Psi(2 \Supprho(0)) + 2 t \norm{\Grad K}_{L^\infty}.
\end{equation}
Assumption~\eqref{cond:phi} implies that $\Psi(r) \to \infty$ as $r \to
\infty$. Moreover, $\Psi(0)=0$ and $\Psi'>0$. Consequently, $\Psi$ is a bijection from $\R^+$ to $\R^+$. We can thus infer \eqref{eqn:supprho-est} from \eqref{eqn:Psi-est} and the inequality $\Supprho(t) \leq D(t)$.
The Lipschitz bound on $u$ in part (b) follows immediately.
\end{proof}

Combining the a priori bounds in Proposition \ref{Prop:2-2} with the fixed-point argument in \cite{CCR2011}, we can now show the global wellposedness of solutions to \eqref{eq:macroscopic-full}. The theorem states

\begin{Theorem}[Well-posedness of the macroscopic model] \label{thm:macro}
Assume $K$ is symmetric, $ \Grad K \in W^{1, \infty}(\Rd)$ and $\phi \in W^{1,\infty}(\mathbb{R}^+)$ is positive, non-increasing, and satisfies the slow decay condition \eqref{cond:phi}. Suppose the initial measure $\rho_0 \in \Pc(\Rd)$. 
Then for any $T > 0$, there exists a unique solution $(\rho, u)$
to~\eqref{eq:macroscopic-full} such that $\rho \in C([0, T], \Pc(\Rd))$ and $u \in C([0, T]; W^{1, \infty}(\Supp \rho))$.
\end{Theorem}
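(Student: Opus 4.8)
The plan is to run the standard Banach fixed-point scheme for measure-valued transport equations, exactly as in \cite{CCR2011}, but using Propositions~\ref{Prop:2-1} and~\ref{Prop:2-2} to supply the two ingredients that are nonstandard here: a well-defined, Lipschitz velocity field despite the implicitness of \eqref{eq:u-implicit-full}, and an a priori bound on the growth of the support. Fix $T>0$. First I would fix a radius $R=R(T)$ dictated by the support estimate \eqref{eqn:supprho-est}: since $\rho_0$ has compact support with $\Supprho(0)=:R_0$, set $R := \tfrac12 \Psi^{-1}(\Psi(2R_0)+2T\norm{\Grad K}_{L^\infty})$, so that any solution (and any iterate) stays inside $B_R$ on $[0,T]$. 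On the complete metric space $X := \{\rho \in C([0,T];\Pc(\Rd)) : \Supp\rho(t)\subset B_{R}\ \forall t\}$, equipped with $\sup_{t}W_1(\cdot,\cdot)$ (the $1$-Wasserstein distance, which metrizes weak-$*$ convergence on a fixed compact set), define the map $\Fpmap$ by: given $\rho\in X$, solve \eqref{eq:u-implicit-full}--\eqref{eq:conserv-momentum} for the unique $u[\rho](t,\cdot)$ via Proposition~\ref{Prop:2-1} (this is where momentum conservation kills the kernel of $\CalA$), extend it to $B_R$ as in Remark~\ref{remark:outside}, integrate the characteristic ODE \eqref{eq:char-limit} to get the flow $\mathcal{T}^t_{u[\rho]}$, and set $\Fpmap(\rho)(t):=\mathcal{T}^t_{u[\rho]}\#\rho_0$. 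A fixed point of $\Fpmap$ is precisely a solution in the sense of Definition~\ref{defn:rho-transp}.

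Next I would check the two quantitative properties needed to close the argument. That $\Fpmap$ maps $X$ into $X$ follows from the support bound: along characteristics $\tfrac{d}{dt}|x(t)|\le \norm{u[\rho](t,\cdot)}_{L^\infty}$, and \eqref{bound:u-2} with the monotonicity of $\phi$ gives exactly the Grönwall-type estimate of Proposition~\ref{Prop:2-2}(a), keeping the pushed-forward measure inside $B_R$. For contractivity, the key point is stability of the velocity field with respect to $\rho$: I would show $\norm{u[\rho_1](t,\cdot)-u[\rho_2](t,\cdot)}_{L^\infty(B_R)}\le C(R,T)\,W_1(\rho_1(t),\rho_2(t))$. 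This is where the operator $\CalM$ from \eqref{eq:calM} is used again: writing $u_i=-\CalM_i^{-1}(\Grad K\ast\rho_i)$ and using the uniform lower bound \eqref{eq:CalMbound} ($\norm{\CalM_i^{-1}}\le 1/\phi(2R)$) together with the Lipschitz dependence of $\Grad K\ast\rho$, of $\Phi=\phi\ast\rho$, and of the kernel $\phi(|x-y|)-\eta$ on $\rho$ in $W_1$ (all of which hold because $\Grad K,\phi,\phi'$ are bounded and $\rho_i$ are supported in a fixed ball), one gets the claimed bound. Then the standard comparison of flow maps — $\tfrac{d}{dt}|\mathcal{T}^t_{u[\rho_1]}(X)-\mathcal{T}^t_{u[\rho_2]}(X)| \le \|\Grad u[\rho_1]\|_{L^\infty}|\cdots| + \|u[\rho_1]-u[\rho_2]\|_{L^\infty}$, combined with the uniform Lipschitz bound \eqref{bound:Lip-u} on $u[\rho_1]$ and Grönwall — yields $W_1(\Fpmap(\rho_1)(t),\Fpmap(\rho_2)(t))\le C(R,T)\,t\,\sup_{[0,t]}W_1(\rho_1,\rho_2)$.

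From here the conclusion is routine: on a short interval $[0,T_1]$ with $C(R,T)T_1<1$, $\Fpmap$ is a contraction on $X$ (restricted to $[0,T_1]$), giving a unique local solution; since the support bound and hence the constants $C(R,T)$ depend only on $T$ and not on the current time, the local time step $T_1$ can be taken uniform, and one iterates to cover all of $[0,T]$, obtaining global existence and uniqueness. Continuity $\rho\in C([0,T];\Pc(\Rd))$ comes from the flow map being continuous in $t$, and $u\in C([0,T];W^{1,\infty}(\Supp\rho))$ from \eqref{bound:Lip-u} together with time-continuity of $\rho$ (hence of $\eta(t)=\phi(2\Supprho(t))$ and of all convolutions). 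The main obstacle I anticipate is the velocity-stability estimate: because $\CalM$ depends on $\rho$ both through the multiplier $\Phi$ and through the integral kernel, bounding $\CalM_1^{-1}-\CalM_2^{-1}$ requires writing $\CalM_1^{-1}-\CalM_2^{-1}=\CalM_1^{-1}(\CalM_2-\CalM_1)\CalM_2^{-1}$ and estimating $(\CalM_2-\CalM_1)[u]$ in $L^\infty$ in terms of $W_1(\rho_1,\rho_2)$ using the already-established uniform bound $\norm{u}_{L^\infty}\le \frac1{\phi(2R)}\norm{\Grad K}_{L^\infty}$; everything else is the by-now-standard \cite{CCR2011} machinery.
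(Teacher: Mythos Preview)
Your proposal is essentially the paper's proof: the same Banach fixed-point scheme from \cite{CCR2011}, the same use of Proposition~\ref{Prop:2-1} to get a well-defined Lipschitz $u[\rho]$, and the velocity-stability estimate you obtain via the resolvent identity $\CalM_1^{-1}-\CalM_2^{-1}=\CalM_1^{-1}(\CalM_2-\CalM_1)\CalM_2^{-1}$ unpacks to exactly the paper's direct subtraction \eqref{eqn:udiff} with remainder $\CalR_1$.

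One small slip worth flagging: your claim that $\Fpmap$ maps $X$ into itself on all of $[0,T]$ ``by the Gr\"onwall-type estimate of Proposition~\ref{Prop:2-2}(a)'' does not hold for \emph{iterates}. That estimate relies on the coupling between the support of $\rho$ and the characteristics of $u[\rho]$, which is only valid when $\rho$ is already a fixed point; for a generic $\rho\in X$ with $\Supp\rho(t)\subset B_R$ one only gets the constant bound $\|u[\rho](t)\|_{L^\infty}\le \|\Grad K\|_{L^\infty}/\phi(2R)$, hence linear growth of the support of $\Fpmap(\rho)$, which can overshoot your tight choice $R=R(T)$. The paper avoids this by not claiming global invariance of $X$: it takes any $R$ containing $\Supp\rho_0$, proves local existence on $[0,T_1]$ small enough that characteristics stay in $B_R$, and only then invokes Proposition~\ref{Prop:2-2} on the \emph{actual solution} to continue. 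Your final iteration paragraph already does this in spirit, so the fix is simply to drop the global $X\to X$ claim (or, equivalently, choose $R$ with a strict buffer, say $R=R(T+1)$, so that the solution always has support $\le R(T)<R$ and the local step remains uniform).
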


\begin{proof}
The proof follows directly from the theory developed in \cite{CCR2011}; we only summarize the setup and the main steps. We refer for details to the proof of Theorem 3.10 in \cite{CCR2011}. 

\medskip
{\em Local well-posedness.} Fix $R>0$ such that the ball $B_R$ contains the initial support of $\rho_0$, and consider the set $\setF$ consisting of all functions in $C([0,T],\Pc(\Rd))$ such that the support of $\rho_t(\cdot)=\rho(t,\cdot)$ is contained in the ball $B_R$ for all $t \in [0,T]$ ($T$ is a positive number to be chosen later). The set $\setF$ endowed with the distance $\mathcal{W}_1$ defined by
\[
\mathcal{W}_1(\rho_t,\tilde{\rho}_t) = \sup_{t \in [0,T]} W_1(\rho_t,\tilde{\rho}_t),
\]
is a complete metric space. Here, $W_1$ denotes the $1$-Wasserstein distance.

For fixed $\rho \in \setF$, one can define a map $\Fpmap$ on this metric space by
\[
\Fpmap [\rho](t) := \mathcal{T}^t_{u[\rho]} \# \rho_0.
\]
Consequently, solutions of \eqref{eq:macroscopic-full} in the sense of Definition \ref{defn:rho-transp} are fixed points of the map $\Fpmap$. The essential step is to show that this map is contractive, and hence, conclude that it has a unique fixed point in $\setF$. 

In our context, the key observation is that for every $\rho \in \setF$, one can use the argument in the proof of Proposition \ref{Prop:2-1} (with $\eta= \phi(2R)$ independent of $t\in [0,T]$ and $\rho \in \setF$) and find a $W^{1,\infty}$-bound on the velocity field $u[\rho]$, bound which depends only on $K$, $\phi$ and $R$ (see estimate \eqref{bound:Lip-u} and Remark \ref{remark:outside}).

The Lipschitz continuity of $u[\rho]$ provides well-posedness of characteristic trajectories for each $\rho \in \setF$. Also, for $T$ short enough, the support of $\Fpmap[\rho](t)$ is contained in $B_R$. By simple arguments (Lemmas 3.11 and 3.7 in \cite{CCR2011}), the following estimate can be shown to hold:
\begin{align*}
W_1(\Fpmap[\rho_1](t),\Fpmap[\rho_2](t)) \leq C(t) \sup_{t \in [0,T]}\| u[\rho_1] - u[\rho_2] \|_{L^\infty(B_R)}.
\end{align*}
To infer the contractivity of $\Fpmap$ we also need an estimate of the form
\begin{equation} 
   \sup_{t \in [0, T]} \norm{u[\rho_1] - u[\rho_2]}_{L^\infty(B_R)} \leq
   \tilde{C}(T) \sup_{t \in [0, T]} W_1(\rho_1, \rho_2),  \label{bound:u-local-1}
\end{equation}
for some constant $\tilde{C}$ that depends on $T$, as well as on $K$, $\phi$ and $R$. Requiring that $T$ is small enough such that $\tilde{C}<1$ concludes the existence of a fixed point of $\Fpmap$, and consequently, of a {\em local} solution to \eqref{eq:macroscopic-full} in the sense of Definition \ref{defn:rho-transp}.

For proving~\eqref{bound:u-local-1}, let $\rho_1, \rho_2 \in \setF$, and $\eta = \phi(2R)$. Denote $u_1 = u[\rho_1]$ and $u_2 = u[\rho_2]$. Then, by \eqref{eq:u-implicit-full} and \eqref{eq:conserv-momentum},
\begin{align*}
    (\phi \ast \rho_1) u_1(t, x) 
    = \int_{B_R} (\phi(|x-y|) - \eta) \rho_1(t,y) u_1(t,y) \dy 
                             - \Grad K \ast \rho_1(t,x),
\\
    (\phi \ast \rho_2) u_2(t, x) 
    = \int_{B_R} (\phi(|x-y|) - \eta) \rho_2(t,y) u_2(t,y) \dy 
                             - \Grad K \ast \rho_2(t,x).
\end{align*}
By Remark \ref{remark:outside}, we can extend $u_1$ and $u_2$ to the whole ball $B_R$ such that $u_1,u_2 \in W^{1,\infty}(B_R)$. Taking the difference of the above two equations, we have
\begin{equation}
\label{eqn:udiff}
   (\phi \ast \rho_1) (u_1 - u_2)
   = \int_{B_R} (\phi(|x-y|) - \eta) \rho_1(y) (u_1(y) - u_2(y)) \dy 
      + \CalR_1,
\end{equation}
where
\begin{align*}
\CalR_1
  = - \Grad K \ast (\rho_1 - \rho_2)
   - (\phi \ast (\rho_1 - \rho_2)) u_2
   + \int_{B_R} (\phi(|x-y|) - \eta) (\rho_1(y) - \rho_2(y)) u_2(y) \dy \,.
\end{align*}
By Lemma 3.15 in \cite{CCR2011}, we have
\begin{align*}
    \norm{\CalR_1}_{L^\infty(B_R)}
\leq
   \left(\norm{\Grad^2 K}_{L^\infty(\R^d)}
          + \norm{\phi}_{W^{1,\infty}(0,\infty)} 
             \norm{u_2}_{W^{1,\infty}(B_R)}\right)
    W_1(\rho_1, \rho_2) \,,
\end{align*}
where a bound on $ \norm{u_2}_{W^{1,\infty}(B_R)}$  can be inferred as explained in Remark \ref{remark:outside}. 

From \eqref{eqn:udiff}, following an argument similar to that leading to \eqref{bound:u-1} from \eqref{eq:calM} and \eqref{eqn:calM}, we estimate
\begin{align*}
   \norm{u_1(t,\cdot) - u_2(t,\cdot)}_{L^\infty(B_R)} 
\leq 
   \tilde{C} W_1(\rho_1, \rho_2), \qquad \quad \text{ for all } t \in [0,T],
\end{align*}
where $\tilde{C}$ depends on $T$, $R$, $\norm{\phi}_{W^{1,\infty}(0,\infty)}$ and $\norm{\Grad K}_{W^{1,\infty}(\R^d)}$. Hence~\eqref{bound:u-local-1} holds and there exists $T > 0$ small enough such that~\eqref{eq:macroscopic} has a unique solution $(\rho, u)$ over the time interval $[0, T]$. 

\medskip
{\em Global well-posedness.} Note that the argument above does not use the slow decay condition \eqref{cond:phi} on $\phi$. Local existence can be extended for as long as the support of $\rho$ remains bounded. Provided that $\phi$ satisfies the additional assumption \eqref{cond:phi}, Proposition \ref{Prop:2-2} guarantees that the support of $\rho$ does not blow up in a finite time. In this case the local theory can be extended to $[0, T]$ for any $T > 0$, which gives the global well-posedness.  
\end{proof}

\begin{rmk}
\label{remark:slow-decay}
It is interesting to note that the slow decay condition \eqref{cond:phi} appears in previous literature on the Cucker-Smale model, in connection to asymptotic flocking. Indeed, for influence functions in power-law form, it has been shown in \cite{HL2009} that the discrete Cucker-Smale model has the \emph{unconditional flocking} property when $\phi$ satisfies \eqref{cond:phi}. In addition, the same property \eqref{cond:phi} is needed for unconditional flocking in the corresponding kinetic and macroscopic Cucker-Smale models \cite{CFRT2010,Tan2014,TT2014}. In our context, condition \eqref{cond:phi} is first needed to guarantee global-in-time well-posedness of solutions to \eqref{eq:macroscopic-full}, as discussed above. Then, condition \eqref{cond:phi} is needed again, in an essential way, to show that solutions $\Epsf$ of the kinetic model \eqref{eq:kinetic} remain compactly supported for all finite times, with a bound on their support that is independent of $\Eps$ (see Proposition \ref{prop:unif-supp} and its proof). This estimate is essential in the $\Eps \to 0$ limit of the kinetic model.
\end{rmk}

Note that by \cite{AGS2006} (Lemma 8.1.6 in Chapter 8),  the unique measure-valued solution in Theorem~\ref{thm:macro} is exactly the unique weak solution for~\eqref{eq:macroscopic-full}, i.e., it satisfies

\begin{align} \label{eq:rho-weak}
   \int_0^T \int_{\R^d} \del_t \varphi(t, x) \rho(t, x) \dx\dt
   + \int_0^T \int_{\R^d} \Grad \varphi \cdot \rho u \dx\dt
   + \int_{\R^d} \varphi(0, x) \rho_0(x, v) \dx=0,
\end{align}
for any $\varphi \in C^1_c([0, T); C^1_b(\R^d))$. 


\section{Well-posedness and uniform bounds of the kinetic model}
\label{sect:uniform-bounds}
In this section we make preparations for proving the convergence of the kinetic equation \eqref{eq:kinetic} to the macroscopic model \eqref{eq:macroscopic-full} by showing several uniform bounds in $\Eps$. These include an uniform bound on the support of the density function $\Epsf$, along with uniform bounds on its moments. A key step is to show that 
a certain ``second-order fluctuation" term vanishes with $\Eps$, which is the main ingredient that allows the passage from the second-order kinetic equation \eqref{eq:kinetic} to the first order macroscopic model \eqref{eq:macroscopic-full}.


 \subsection{Well-posedness setup for the kinetic model \eqref{eq:kinetic}}
 Model \eqref{eq:kinetic} with $\Eps>0$ fixed is a particular case of the general class of models studied in  \cite{CCR2011}. For completeness, we present briefly the well-posedness setup from \cite{CCR2011}, which is similar in fact to the measure transportation framework used for the macroscopic equation in Section \ref{sect:well-posedness}.
 
The solution space for equation \eqref{eq:kinetic} is $C([0,T],\Pc(\Rd \times \Rd))$, where $\Pc(\mathbb{R}^d \times \Rd)$ represents the space of probability measures with compact support in $\Rd \times \Rd$. The approach is to consider the characteristic equations associated with \eqref{eq:kinetic} and define measure-valued solutions $ \Epsf$ to \eqref{eq:kinetic} as the mass-transport (or push-forward) of the initial density $f_0$ along the flow defined by characteristics. 

 Specifically, the characteristic equations for \eqref{eq:kinetic} are given by:
 \begin{align} \label{eq:characteristics}
     &\frac{{\rm d} x}{\dt} = v, \nn
 \\
    &\frac{{\rm d} v}{\dt} 
      = - \frac{1}{\Eps} 
        \vpran{\int_{\R^d}\int_{\R^d} 
                        \phi(|x-y|) (v - v^\ast) \Epsf(t, y, v^\ast) \dy\dv^\ast
                    + \Grad K \ast \rho_\Eps(t, x)},
 \\
   & (x, v) \big|_{t=0} = (x_0, v_0) \in {\rm supp}f_0. \nn
 \end{align}
 
 In short-hand notation, one can write the characteristic system \eqref{eq:characteristics} as
 \begin{align} 
  \label{eq:characteristics-gen}
     &\frac{{\rm d} x}{\dt} = v, \nn
 \\
    &\frac{{\rm d} v}{\dt} 
      = \mathcal{H} [\Epsf](t, x, v),
 \\
   & (x, v) \big|_{t=0} = (x_0, v_0) \in {\rm supp}f_0, \nn
 \end{align}
 where $\mathcal{H}[\Epsf]$ denotes the entire right-hand-side of the $v$-equation in \eqref{eq:characteristics}. 

 Suppose that $\Epsf$ is a fixed function and the vector field $\mathcal{H} [\Epsf](t, x, v)$ is locally Lipschitz in $x$ and $v$. Then, standard ODE theory provides the local existence of the flow map $\mathcal{T}^{t,\Eps}_{\mathcal{H}[\Epsf]}$ associated to \eqref{eq:characteristics-gen}:
 \[
 (x_0,v_0) 
  \xrightarrow{\mathcal{T}^{t,\Eps}_{\mathcal{H}[\Epsf]}} (x,v), \quad \qquad (x,v) = (x_\Eps(t),v_\Eps(t)),
 \]
 where $(x_\Eps(t),v_\Eps(t))$ is the unique solution of \eqref{eq:characteristics-gen} that starts at $(x_0,v_0)$. 

Similar to Definition \ref{defn:rho-transp}, we consider measure-valued solutions of \eqref{eq:kinetic}  in the mass-transportation sense \cite{CCR2011}:
\begin{Definition}
\label{defn:sol}
Let $T>0$ be fixed. A measure-valued function $\Epsf\in C([0, T]; \Pc (\Rd \times \Rd))$ is said to be a solution of the kinetic equation \eqref{eq:kinetic} with initial condition $f_0 \in \Pc (\Rd \times \Rd)$ if
\begin{equation}
\label{eqn:f-transp}
\Epsf(t) = \mathcal{T}^{t,\Eps}_{\mathcal{H}[\Epsf]} \# f_0,
\end{equation}
for each $t \in [0,T]$.
\end{Definition}
The push-forward in \eqref{eqn:f-transp} is in the mass transportation sense:
\begin{equation}
\label{eqn:trans1}
\int_{\R^{2d}} \zeta (x,v) \Epsf (t,x,v) dx dv = \int_{\R^{2d}} \zeta(\mathcal{T}^{t,\Eps}_{\mathcal{H}[\Epsf]}(X,V)) f_0(X,V) dX dV,
\end{equation}
for all $\zeta \in C_b(\Rd \times \Rd)$.

The results in \cite[Section 4]{CCR2011}  apply to the kinetic equation \eqref{eq:kinetic} with $\Eps>0$ fixed.  The well-posedness result that can be inferred directly from there is the following.

 \begin{Theorem}[Well-posedness of the kinetic model \eqref{eq:kinetic} \cite{CCR2011}] 
 \label{thm:exist-measure}
 Assume the following properties on the influence function $\phi$ and the interaction potential $K$:
 \[
 \phi \text{ and }  \nabla K \text{ are locally Lipschitz, and } \; |\phi(x)|, |\nabla K(x)| \leq C(1+|x|) \text{ for all } x\in \Rd,
 \]
 for some $C>0$.  Take an initial measure $f_0 \in \Pc(\Rd \times \Rd)$. Then there exists a unique solution $\Epsf \in C([0,\infty), \Pc(\Rd \times \Rd))$ of \eqref{eq:kinetic}, in the sense of Definition \ref{defn:sol}, whose support grows at a controlled rate. Specifically, there exists an increasing function $R_\Eps(T)$ such that for all $T>0$,
 \[
 \operatorname{supp }\Epsf(t) \subset B_{R_\Eps(T)} \quad  \text{ for all } t \in [0,T].
 \]
 \end{Theorem}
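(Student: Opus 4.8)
\emph{Proof plan.} The plan is to recognize \eqref{eq:kinetic}, for each fixed $\Eps>0$, as a particular instance of the abstract kinetic transport model of \cite[Section~4]{CCR2011}, so that Theorem~\ref{thm:exist-measure} follows from the general existence--uniqueness--stability theory developed there once we verify that the velocity field $\mathcal{H}[\Epsf]$ of the characteristic system~\eqref{eq:characteristics-gen} satisfies the hypotheses required. The first step is to rewrite $\mathcal{H}$ so as to isolate the explicit $v$--dependence of the alignment force: with $\rho_\Eps(t,x)=\int_{\Rd}\Epsf(t,x,v^\ast)\dv^\ast$,
\begin{align*}
\mathcal{H}[\Epsf](t,x,v)
 = -\frac{1}{\Eps}\left( (\phi\ast\rho_\Eps)(t,x)\,v
   \;-\;\int_{\R^{2d}}\phi(|x-y|)\,v^\ast\,\Epsf(t,y,v^\ast)\dy\dv^\ast
   \;+\;\Grad K\ast\rho_\Eps(t,x)\right),
\end{align*}
which exhibits $\mathcal{H}[\Epsf]$ as affine in $v$ with coefficients obtained by convolving the locally Lipschitz, at most linearly growing maps $\phi$ and $\Grad K$ against the compactly supported measure $\Epsf$ (respectively $\rho_\Eps$). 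From this representation I would then check the three properties required by \cite{CCR2011}: that $\mathcal{H}[\Epsf]$ is well defined for every $\Epsf\in\Pc(\Rd\times\Rd)$; that, for fixed $\Epsf$, the vector field $(x,v)\mapsto\mathcal{H}[\Epsf](t,x,v)$ is locally Lipschitz and satisfies the growth condition of \cite{CCR2011}, so that the characteristic ODEs~\eqref{eq:characteristics} generate a flow map $\mathcal{T}^{t,\Eps}_{\mathcal{H}[\Epsf]}$ on every finite interval with support growing at a controlled rate; and that $\mathcal{H}$ is Lipschitz-stable in $\Epsf$ for the $1$--Wasserstein distance on families of measures with uniformly bounded support --- indeed, for $\Epsf,g$ supported in a common ball $B_R$ the maps $(y,v^\ast)\mapsto\phi(|x-y|)$ and $(y,v^\ast)\mapsto\phi(|x-y|)\,v^\ast$ are Lipschitz on $B_R$ uniformly in $x\in B_R$, so their integrals against $\Epsf-g$ are bounded by $C_R\,W_1(\Epsf,g)$, and the $\Grad K$ term is handled in the same way.

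Granted these properties, the remaining argument follows \cite{CCR2011} and we only sketch it (compare the proof of Theorem~\ref{thm:macro}). For local well-posedness one fixes $R$ with $\operatorname{supp}f_0\subset B_R$, works in the complete metric space of curves in $\Pc(\Rd\times\Rd)$ with support in a fixed ball, endowed with $\sup_{t}W_1$, and shows that $\Epsf\mapsto\mathcal{T}^{t,\Eps}_{\mathcal{H}[\Epsf]}\#f_0$ is a contraction for $T$ small, by combining the flow-map stability lemmas of \cite{CCR2011} with the $W_1$--stability of $\mathcal{H}$; this yields a unique local solution in the sense of Definition~\ref{defn:sol}. The controlled growth of the support coming from \cite{CCR2011} then rules out finite-time blow-up and allows continuation to all of $[0,\infty)$, with $R_\Eps(T)$ the resulting increasing bound. (When $\phi\ge0$, which is the case relevant for the rest of the paper, this last point is transparent: the alignment term is dissipative, so along the characteristic realizing the maximal speed it is only $\Grad K$ --- of linear growth --- that drives the velocity support, giving $\dot R_v\le\frac{C}{\Eps}(1+R_x)$ with $R_x,R_v$ the radii of the $x$-- and $v$--projections of $\operatorname{supp}\Epsf$; together with $\dot R_x\le R_v$ this is a linear differential inequality, so the support grows at most exponentially in $t/\Eps$.)

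I expect the only genuine obstacle to be the $W_1$--stability of the nonlocal force, specifically of the alignment term, where the unbounded factor $v-v^\ast$ forces one to keep careful track of the velocity support. Once all competing measures are confined to a common ball this becomes routine, but it is exactly here that the compactly supported framework of \cite{CCR2011} is used in an essential way, and it is what dictates working with $\sup_{t}W_1$ on curves of compactly supported measures rather than with the Wasserstein distance on all of $\Pc(\Rd\times\Rd)$.
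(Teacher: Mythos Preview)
Your proposal is correct and aligned with the paper's treatment: the paper does not supply its own proof of this theorem but simply records it as a direct consequence of the results in \cite[Section~4]{CCR2011}, noting that model~\eqref{eq:kinetic} with $\Eps>0$ fixed is a particular case of the general class studied there. Your sketch of how to verify the hypotheses of \cite{CCR2011} --- rewriting $\mathcal{H}[\Epsf]$ to expose its affine-in-$v$ structure, checking local Lipschitz continuity and sublinear growth of the characteristic vector field, and establishing $W_1$-stability of the nonlocal force on families of measures with common compact support --- is exactly the verification one would carry out, and your identification of the velocity factor in the alignment term as the only delicate point (handled by the compact-support framework) is on target.
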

 
We also note that by results in \cite{AGS2006,CaFrFiLaSl2011}, $f_\Eps$ given by Theorem \ref{thm:exist-measure} is also a weak solution of \eqref{eq:kinetic}, i.e., it satisfies
 \begin{equation} \label{eq:f-Eps-weak}
 \begin{aligned}
 & \int_0^T \int_{\R^{2d}} \del_t \psi(t, x, v) \Epsf(t, x, v) \dx\dv \dt
     + \int_0^T \int_{\R^{2d}} \Grad \psi \cdot v \Epsf \dx\dv\dt
 \\
 & \qquad - \frac{1}{\Eps} 
          \int_0^T \int_{\R^{2d}} 
              \nabla_v \psi \cdot F[\Epsf] \Epsf \dx\dv\dt
     + \int_{\R^{2d}} \psi(0, x, v) f_0(x, v) \dx\dv=0,
 \end{aligned}
 \end{equation}
 for any $\psi \in C^1_c([0, T); C^1_b(\R^d \times \R^d))$.  
 
\begin{rmk}
\label{rmk:zerofmom}
An essential assumption made throughout our work is that the interaction potential $K$ is symmetric. For such potentials, the kinetic model \eqref{eq:kinetic} conserves linear momentum through time evolution. To see this, one can take in the weak formulation  \eqref{eq:f-Eps-weak}  the test function $\psi(t, x, v) = \psi_1(t) v \psi_2(v)$, where $\psi_1 \in C^1_c(0,T)$ and 
$v \psi_2(v) \in C^1_b(\Rd)$, with $\psi_2=1$ on the set $\{ v \in \Rd: (x,v) \in \Supp \Epsf \} $. Due to the symmetry of $K$,  $\int_{\R^{2d}} F[\Epsf] \Epsf \dx\dv = 0$,  and we obtain that
\begin{align*}
    \frac{\rm d}{\dt} \int_{\R^{2d}} v \Epsf(t, x,v) \dx \dv = 0,
\qquad
   \text{in the distributional sense.}
\end{align*}

Throughout the paper we assume, with no loss of generality, that the initial density $f_0$ has zero momentum, i.e.,
\begin{equation}
\label{eqn:init-fm-zero}
\int_{\R^{2d}} v f_0(x,v) \dx \dv =0.
\end{equation}
Hence, the solutions $\Epsf$ provided by Theorem  \ref{thm:exist-measure} satisfy:
\begin{equation}
\label{eqn:feps-zeromom}
\int_{\R^{2d}} v \Epsf(t,x,v) \dx \dv =0,
\end{equation}
for all finite times.
\end{rmk}
 

\subsection{Uniform bound of the support} We prove here that starting from a compactly supported probability density function $f_0$, the solution $\Epsf(t, \cdot, \cdot)$ to \eqref{eq:kinetic} remains compactly supported in $(x, v)$ for any finite time. Moreover, the size of $\Supp \Epsf$ is independent of $\Eps$. 

We first prove a rough bound on the first moment\footnote{We adopt here a common terminology used for kinetic equations and refer to {\em moments} of $\Epsf$ the moments with respect with the velocity variable $v$.} of $\Epsf$.

\begin{Proposition} \label{prop:3-1} Let $\Epsf$ be the solution to~\eqref{eq:kinetic}, as provided by Theorem  \ref{thm:exist-measure}. Suppose $\phi \geq 0$ and that the initial data $f_0$ has a finite first moment. Then for each finite time $T>0$ and each fixed $\Eps>0$, $\Epsf(t)$ has a finite first moment for any $t \in [0, T]$. The bound of the first moment only depends on $T, \Eps$, and $K$. In particular, it is independent of $\phi$. 
\end{Proposition}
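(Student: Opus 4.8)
The statement asks for a finite-first-moment bound on $\Epsf(t)$ that depends only on $T$, $\Eps$, and $K$ — crucially \emph{not} on $\phi$. The natural approach is to work along the characteristic system \eqref{eq:characteristics} and track the evolution of $|v_\Eps(t)|$ (or, more conveniently for a measure, the quantity $\int_{\R^{2d}} |v| \Epsf(t,x,v) \dx\dv$). The key structural observation is that the alignment term in the force $F[\Epsf]$, namely $\int \phi(|x-y|)(v-v^\ast)\Epsf(t,y,v^\ast)\dy\dv^\ast$, is \emph{dissipative} in $v$: when integrated against $v$ and against $\Epsf$ in $(x,v)$, it produces a nonpositive contribution (this is the same computation that gives momentum conservation in Remark \ref{rmk:zerofmom}, but here one keeps the sign rather than using symmetry to kill it). Hence it can be thrown away in an upper bound, and only the interaction potential term $\Grad K \ast \rho_\Eps$ survives, which is bounded by $\norm{\Grad K}_{L^\infty}$ pointwise since $\phi$-dependence never enters that term.

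\smallskip

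\textbf{Main steps.} First, I would introduce $m_1(t) := \int_{\R^{2d}} |v|\, \Epsf(t,x,v)\dx\dv$ and justify that it is finite for small $t$ and differentiable (in the distributional/a.e.\ sense), using the weak formulation \eqref{eq:f-Eps-weak} with test functions approximating $|v|$, or equivalently working with the push-forward representation \eqref{eqn:trans1} and differentiating under the integral along characteristics. Second, I would compute $\frac{\rm d}{\dt} m_1(t)$: the transport term $v\cdot\Grad$ contributes nothing to a $v$-moment, and the force term contributes $-\tfrac1\Eps \int_{\R^{2d}} \frac{v}{|v|}\cdot F[\Epsf]\,\Epsf\,\dx\dv$. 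Splitting $F[\Epsf]$ into its alignment and potential parts, the alignment part gives
\[
-\frac{1}{\Eps}\int_{\R^{4d}} \frac{v}{|v|}\cdot (v-v^\ast)\,\phi(|x-y|)\,\Epsf(t,x,v)\Epsf(t,y,v^\ast)\,\dx\dv\dy\dv^\ast,
\]
which after symmetrizing in the pairs $(x,v)\leftrightarrow(y,v^\ast)$ becomes $-\frac{1}{2\Eps}\int \phi(|x-y|)\bigl(\frac{v}{|v|}-\frac{v^\ast}{|v^\ast|}\bigr)\cdot(v-v^\ast)\,\Epsf\Epsf \leq 0$, since $\phi\geq 0$ and $(\hat v - \hat{v}^\ast)\cdot(v-v^\ast)\geq 0$ for the normalized vectors. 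The potential part is bounded in absolute value by $\frac1\Eps \int_{\R^{2d}} |\Grad K \ast \rho_\Eps|\,\Epsf \leq \frac1\Eps \norm{\Grad K}_{L^\infty}$. Hence $m_1'(t) \leq \frac1\Eps \norm{\Grad K}_{L^\infty}$, and integrating gives $m_1(t) \leq m_1(0) + \frac{t}{\Eps}\norm{\Grad K}_{L^\infty}$, which is exactly the claimed form of bound (depending on $T$, $\Eps$, $K$, and the initial first moment, independent of $\phi$). Since this bound stays finite on $[0,T]$, the a priori boundedness also justifies extending the computation over all of $[0,T]$.

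\smallskip

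\textbf{Expected obstacle.} The only real subtlety is the lack of smoothness of $|v|$ at the origin and the fact that $\Epsf$ is merely a measure, so differentiating $m_1$ and manipulating $\frac{v}{|v|}$ inside the integral requires care — one should either mollify $|v|$ to $\sqrt{|v|^2+\delta^2}$, carry out the monotone-in-$\delta$ estimates, and pass $\delta\to 0$ by monotone convergence, or work directly with the characteristics $(x_\Eps(t),v_\Eps(t))$ and use $\frac{\rm d}{\dt}|v_\Eps| \leq |\dot v_\Eps|$, with the alignment contribution to $\dot v_\Eps$ controlled by noting $\int \phi(|x_\Eps-y|)(v_\Eps-v^\ast)\Epsf\dy\dv^\ast = \Phi_\Eps(x_\Eps) v_\Eps - (\text{average of }v^\ast)$ and using $\int v^\ast \Epsf = 0$ together with a bound on $\int|v^\ast|\Epsf$ — but this last route reintroduces $m_1$ on the right-hand side and would only give a Grönwall bound with a $\phi$-dependent rate, which is weaker than claimed. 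So the symmetrization argument at the level of the measure (first route) is the one that yields the clean $\phi$-independent bound, and making that rigorous for measure-valued $\Epsf$ is the part that needs the most attention. Everything else is routine.
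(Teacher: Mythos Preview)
Your proposal is correct and follows essentially the same route as the paper: differentiate $\int |v|\,\Epsf\,\dx\dv$, symmetrize the alignment contribution in $(x,v)\leftrightarrow(y,v^\ast)$ to obtain the nonnegative expression $(v-v^\ast)\cdot\bigl(\tfrac{v}{|v|}-\tfrac{v^\ast}{|v^\ast|}\bigr)\geq 0$, discard it, and bound the remaining potential term by $\tfrac{1}{\Eps}\norm{\Grad K}_{L^\infty}$ to arrive at $m_1(t)\leq m_1(0)+\tfrac{T}{\Eps}\norm{\Grad K}_{L^\infty}$. Your remark about mollifying $|v|$ to handle the singularity at the origin is a level of care the paper's a priori computation does not make explicit, but the core argument is identical.
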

\begin{proof}
By symmetry of the integral we have
\begin{align*}
    \int_{\R^{2d}} \frac{v}{|v|} \cdot F[\Epsf] \Epsf \dx\dv
&    = \int_{\R^{4d}} 
           \phi(|x-y|) (v - v^\ast) \cdot \frac{v}{|v|}
            \Epsf(t, y, v^\ast) \Epsf(t, x, v) \dx\dy\dv\dv^\ast
\\
& \quad \,
       + \int_{\R^{2d}} \frac{v}{|v|} 
                  \cdot \vpran{\Grad K \ast \Epsf} \Epsf \dx\dv
\\
& \hspace{-1.5cm} 
     = \frac{1}{2} \int_{\R^{4d}} 
           \phi(|x-y|) (v - v^\ast) \cdot \vpran{\frac{v}{|v|} - \frac{v^\ast}{|v^\ast|}}
            \Epsf(t, y, v^\ast) \Epsf(t, x, v) \dx\dy\dv\dv^\ast
\\
& \quad \,
       + \int_{\R^{2d}} \frac{v}{|v|} 
                  \cdot \vpran{\Grad K \ast \Epsf} \Epsf \dx\dv.
\end{align*}
Notice that
\begin{align*}
   (v - v^\ast) \cdot \vpran{\frac{v}{|v|} - \frac{v^\ast}{|v^\ast|}}
= |v| + |v^\ast| - \frac{v \cdot v^\ast}{|v|} - \frac{v^\ast \cdot v}{|v^\ast|}
\geq 0.
\end{align*}
Hence,
\begin{align*}
    \int_{\R^{2d}} \frac{v}{|v|} \cdot F[\Epsf] \Epsf \dx\dv
\geq 
    \int_{\R^{2d}} \frac{v}{|v|} 
                  \cdot \vpran{\Grad K \ast \Epsf} \Epsf \dx\dv.
\end{align*}
This gives
\begin{align*}
   \frac{\rm d}{{\rm d}t} \int_{\R^{2d}} |v| \Epsf(t, x, v) \dx\dv
\leq 
  - \frac{1}{\Eps}\int_{\R^{2d}} \frac{v}{|v|} 
                  \cdot \vpran{\Grad K \ast \Epsf} \Epsf \dx\dv
\leq \frac{1}{\Eps}\norm{\Grad K}_{L^\infty}.
\end{align*}
Therefore, for $t \in [0, T]$, we have
\begin{align*}
     \int_{\R^{2d}} |v| \Epsf(t, x, v) \dx\dv
\leq 
     \int_{\R^{2d}} |v| f_0(x, v) \dx\dv
     + \frac{T}{\Eps} \norm{\Grad K}_{L^\infty}.
\end{align*}
Hence the first moment of $\Epsf$ is bounded for each $\Eps$ and $T$, and the bound is independent of $\phi$. 
\end{proof}

The following proposition shows that $\Epsf(t, \cdot, \cdot)$ is compactly supported and the size of its support is independent of $\Eps$.
\begin{Proposition}[Uniform bound on the support of $\Epsf$]
\label{prop:unif-supp}
 Suppose $K$ is symmetric, $\Grad K \in
W^{1, \infty}(\Rd)$ and $\phi \in W^{1,\infty}(\mathbb{R}^+)$ is positive,
non-increasing, and satisfies the slow decay condition \eqref{cond:phi}.
 Let $\Epsf$ be the solution to~\eqref{eq:kinetic} with an initial data $f_0 \in \Pc(\Rd \times \Rd)$ that satisfies \eqref{eqn:init-fm-zero}. Then for each finite time $T$, $\Epsf(t, \cdot, \cdot)$ has a compact support for any $t \in [0, T]$. The size of the support only depends on $T, \phi$, and $K$; in particular, it is independent of $\Eps$. 
\end{Proposition}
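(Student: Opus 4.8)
\medskip
\noindent
The plan is to run a self-consistent (bootstrap) estimate for the radius of the support of $\Epsf(t)$ in the $x$- and $v$-variables, coupled with a control on the velocity moments of $\Epsf$, and then to close the resulting differential inequality using the slow decay condition~\eqref{cond:phi} exactly as in the proof of Proposition~\ref{Prop:2-2}. Every constant produced along the way must be independent of $\Eps$, and the mechanism that makes this possible is the zero-momentum property~\eqref{eqn:feps-zeromom}, which holds because $K$ is symmetric. Concretely, I would introduce the spatial and velocity support radii $R_x(t) = \max\{|x| : (x,v)\in\Supp\Epsf(t)\}$ and $R_v(t) = \max\{|v| : (x,v)\in\Supp\Epsf(t)\}$, both finite for each $t$ by Theorem~\ref{thm:exist-measure}, and record two elementary facts: from $\dot x = v$ along the characteristics~\eqref{eq:characteristics}, $R_x(t) \leq R_x(0) + \int_0^t R_v(s)\ds$; and, from the monotonicity of $\phi$ exactly as in~\eqref{eq:Philow}, $\phi\ast\rho_\Eps(t,x) \geq \phi(2R_x(t)) =: \eta(t) > 0$ for every $x$ in the spatial support of $\Epsf(t)$, where $\rho_\Eps(t,\cdot) = \int\Epsf(t,\cdot,v)\,\dv$.

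The core is a moment estimate. Writing $F[\Epsf] = (\phi\ast\rho_\Eps)(x)\,v - (\phi\ast j_\Eps)(x) + \Grad K\ast\rho_\Eps(x)$ with $j_\Eps = \int v\Epsf\,\dv$, I would test the weak formulation~\eqref{eq:f-Eps-weak} with $\psi = |v|^2$ (cut off outside the support, which is licit by Theorem~\ref{thm:exist-measure}) and symmetrize the alignment contribution as in Proposition~\ref{prop:3-1}. The decisive point is that, after expanding $|v-v^\ast|^2$, the cross term is proportional to $|\int v\Epsf|^2$, which vanishes by~\eqref{eqn:feps-zeromom}; using $\phi(|x-y|)\geq\eta(t)$ on the support, the alignment term is therefore genuinely coercive, and one arrives at $\Eps\,\frac{\rm d}{\dt}\sqrt{M_2(t)} \leq -\eta(t)\sqrt{M_2(t)} + \norm{\Grad K}_{L^\infty}$, where $M_2(t) = \int|v|^2\Epsf\,\dx\dv$. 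This Gronwall inequality gives an $\Eps$-independent bound of the form $M_2(t)^{1/2} \leq M_2(0)^{1/2} + \norm{\Grad K}_{L^\infty}/\eta(t)$; the same computation with $\psi = |v|^p$ propagates analogous bounds to all higher moments. I would then feed this back into the characteristic equation for $v$: along a trajectory $\Eps\,\frac{\rm d}{\dt}|v| \leq -\eta(t)|v| + \norm{\phi}_{L^\infty}\norm{j_\Eps}_{L^1} + \norm{\Grad K}_{L^\infty}$ with $\norm{j_\Eps}_{L^1} \leq M_1(t) \leq M_2(t)^{1/2}$, so that taking the supremum over characteristics gives a relaxation inequality whose stable value bounds $R_v(t)$ by $R_v(0)$ plus an explicit, $\Eps$-independent, decreasing function of $\eta(t) = \phi(2R_x(t))$.

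Inserting this bound on $R_v$ into $R_x(t) \leq R_x(0) + \int_0^t R_v$ produces, for an envelope $D(t)\geq R_x(t)$, a closed differential inequality $D'(t) \leq \Theta(\phi(2D(t)))$ with $\Theta$ decreasing --- of exactly the type solved in Proposition~\ref{Prop:2-2}. The slow decay condition~\eqref{cond:phi} makes the relevant primitive a bijection of $\R^+$, so $D(t)$, and hence $R_x(t)$, $R_v(t)$ and all the moment bounds, remain finite on every $[0,T]$, with bounds depending only on $T$, $\phi$, $K$ and the initial data. A standard continuation argument based on the $\Eps$-fixed existence statement in Theorem~\ref{thm:exist-measure} then upgrades these a priori estimates to genuine bounds, completing the proof.

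I expect the main obstacle to be precisely this circular dependence among $R_x$, $R_v$ and the moments: the lower bound $\eta = \phi(2R_x)$ that makes the moment estimate coercive degrades as $R_x$ grows, the moment bound is what controls $R_v$, and $R_v$ is what drives the growth of $R_x$. Breaking this loop with $\Eps$-uniform constants hinges entirely on momentum conservation making the alignment dissipation coercive, and then on the slow decay condition preventing the envelope $D(t)$ from blowing up in finite time. A secondary, more technical point is that differentiating $\sup|v|$ along characteristics and differentiating the moments must be justified at the level of the push-forward formulation of Definition~\ref{defn:sol}, which is routine but requires the care already exercised in~\cite{CCR2011, FS2014}.
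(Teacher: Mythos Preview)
Your overall architecture --- bootstrap between the spatial radius $R_x$, the velocity radius $R_v$, and a coercive relaxation inequality, closed via the slow-decay condition as in Proposition~\ref{Prop:2-2} --- is exactly the right one, and your identification of momentum conservation as the mechanism that makes the constants $\Eps$-free is spot on. But the detour through the second moment $M_2$ is both unnecessary and, as you have written it, insufficient.

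The problem is in the step where you ``feed this back'' into the characteristic equation for $|v|$. You bound the alignment source term crudely by $\|\phi\|_{L^\infty}\|j_\Eps\|_{L^1}\leq \|\phi\|_{L^\infty}M_2^{1/2}$, and then insert your moment bound $M_2^{1/2}\leq M_2(0)^{1/2}+\|\nabla K\|_{L^\infty}/\eta(t)$. This produces, after the relaxation argument, $R_v(t)\lesssim C_1/\eta(t)+C_2/\eta(t)^2$, with the dominant term scaling like $1/\eta^2=1/\phi(2R_x)^2$. Feeding this into $D'\leq R_v$ yields $\phi(2D)^2\,D'\leq C$, whose integration requires $\int_0^\infty\phi(r)^2\,\dr=\infty$, a strictly stronger hypothesis than~\eqref{cond:phi}. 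So under the stated assumption $\int\phi=\infty$ your loop does not close.

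The paper avoids the moment step entirely by using momentum conservation \emph{directly} in the $R_v$ inequality. At the maximal velocity $v_M$ one writes, using~\eqref{eqn:feps-zeromom},
\[
\frac{v_M}{|v_M|}\cdot\!\int\phi(|x-y|)\,v^\ast\Epsf\,\dy\dv^\ast
=\frac{v_M}{|v_M|}\cdot\!\int\bigl(\phi(|x-y|)-\eta(t)\bigr)v^\ast\Epsf
\leq |v_M|\bigl(\Phi_\Eps(x)-\eta(t)\bigr),
\]
so the damping $-\Phi_\Eps|v_M|$ and this source combine to give exactly $-\eta(t)|v_M|$, with no residual moment term. One then gets $R_v(t)\leq R_v(0)+\|\nabla K\|_{L^\infty}/\eta(t)$, linear in $1/\eta$, and the closure goes through with only $\int\phi=\infty$. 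In short: invoke momentum conservation in the $R_v$ estimate itself, not in an auxiliary moment bound, and the $M_2$ computation becomes superfluous.
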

\begin{proof}
Consider the characteristic equations \eqref{eq:characteristics} associated with~\eqref{eq:kinetic}. Using the boundedness of the first moment of $\Epsf$ on the time interval $[0, T]$, as shown in Proposition~\ref{prop:3-1}, we immediately derive from~\eqref{eq:characteristics} that for each fixed $\Eps$ and $T$, $\Epsf$ has a compact support in $(x, v)$. Our goal is to show that the size of this compact support is independent of $\Eps$. 

To indicate the dependence on the initial location, we use the notation $(\Epsx(t; x_0, v_0),\Epsv(t; x_0, v_0))$ for the characteristic path that originates at $(x_0,v_0)$. 
Consider now the entire family of characteristic paths initialized at points $(x_0,v_0)$ inside the support of $f_0$. Since $\Epsx(t; x_0, v_0)$ and $\Epsv(t; x_0, v_0)$ are continuous in $(x_0, v_0)$ for each $t \in [0, T]$,  the maxima of the supports of $\Epsx$ and $\Epsv$ are attained, and one can define the sizes $\Suppxeps(t)$ and $\Suppveps(t)$ of these supports:
\begin{align} \label{def:Supp-x-v}
   \Suppxeps(t) = \max_{(x_0,v_0)\in {\rm supp}f_0 }|x_\Eps(t; x_0,v_0)|, 
\qquad
  \Suppveps(t) = \max_{(x_0,v_0) \in {\rm supp}f_0}|v_\Eps(t; x_0,v_0)|.  
\end{align}
Denote
\begin{align} \label{def:U}
    \EpsU(t) = \Suppxeps(0) + \int_0^t \abs{\Epsv(\tau)} \dtau.
\end{align}
Then 
\begin{align} \label{bound:1}
    \Suppxeps(t) \leq \EpsU(t).
\end{align}
Denote
\begin{equation}
\label{eqn:PhiEps}
    \Phi_\Eps(t,x) = \int_{\R^d} \phi(|x-y|) \rho_\Eps(t,y) \dy.
\end{equation}
Let $\Vmax(t)$ be the velocity with the largest magnitude at time $t$, that is,
\begin{equation}
\label{eqn:vM}
     \abs{\Vmax(t)} = \Suppveps(t),
\end{equation}
and let $\eta(t) = \phi(2D_\Eps(t))$. Then the monotonicity of $\phi$ gives
\begin{align*}
    0 < \eta(t) \leq \phi(|x - y|)
\qquad 
    \text{ for any } (x,\cdot), (y,\cdot) \in {\Supp} \Epsf (t, \cdot, \cdot).
\end{align*}
The time evolution of $\Suppveps$ is given by
\begin{align} 
\label{ineq:abs-v}
   \frac{{\rm d}\Suppveps}{\dt} 
 &  = - \frac{1}{\Eps}  \frac{\Vmax}{|\Vmax|} \cdot
       \vpran{\int_{\R^{2d}} 
                       \phi(|\Epsx-y|) (\Vmax - v^\ast) \Epsf(t, y, v^\ast) \dy\dv^\ast
                   + \Grad K \ast \rho_\Eps} \nn
\\
   &  \leq 
       - \frac{1}{\Eps} 
         \vpran{|\Vmax| \int_{\R^{d}} \phi(|\Epsx-y|) \rho_\Eps(t, y) \dy
                    - \frac{\Vmax}{|\Vmax|} \cdot\int_{\R^{2d}} \phi(|\Epsx-y|) v^\ast \Epsf(t, y, v^\ast) \dy\dv^\ast}  
\\
  & \quad + \frac{1}{\Eps} \norm{\Grad K}_{L^\infty} \Denote RHS. \nn
\end{align}
By the momentum conservation \eqref{eqn:feps-zeromom}, we have
\begin{align*}
 RHS 
  & =
       - \frac{1}{\Eps} 
         \vpran{|\Vmax| \Phi_\Eps(\Epsx)
                  - \frac{\Vmax}{|\Vmax|} \cdot
                      \int_{\R^{2d}} (\phi(|\Epsx-y|) - \eta(t)) v^\ast \Epsf(t, y, v^\ast) \dy\dv^\ast} 
 + \frac{1}{\Eps} \norm{\Grad K}_{L^\infty} \nn
 \\
   & \leq 
       - \frac{1}{\Eps} 
         \vpran{|\Vmax| \Phi_\Eps(\Epsx)
                  - \abs{\Vmax}
                      \int_{\R^{2d}} (\phi(|\Epsx-y|) - \eta(t)) \Epsf(t, y, v^\ast) \dy\dv^\ast} 
  + \frac{1}{\Eps} \norm{\Grad K}_{L^\infty} \nn
\\
   & =                    
       - \frac{\eta(t)}{\Eps} 
         \vpran{|\Vmax| - \frac{\norm{\Grad K}_{L^\infty}}{\eta(t)}}.  \nn
\end{align*}
Using the estimate above in \eqref{ineq:abs-v}, along with \eqref{eqn:vM} and the fact that $\eta(t)$ is non-increasing, we find
\begin{align*}
   \frac{\rm d}{\dt} \vpran{\Suppveps - \frac{\norm{\Grad K}_{L^\infty}}{\eta(t)} }
&\leq
       - \frac{\eta(t)}{\Eps} 
         \vpran{|\Vmax| - \frac{\norm{\Grad K}_{L^\infty}}{\eta(t)}}
       - \frac{\rm d}{\dt} \vpran{\frac{\norm{\Grad K}_{L^\infty}}{\eta(t)}}.  \nn
\\
&\leq
       - \frac{\eta(t)}{\Eps} 
         \vpran{\Suppveps - \frac{\norm{\Grad K}_{L^\infty}}{\eta(t)}} \,.
\end{align*}
Therefore for $t \in [0, T]$ we have
\begin{align} \label{bound:Supp-v}
   \Suppveps(t) 
\leq 
   \Suppveps(0) - \frac{\norm{\Grad K}_{L^\infty}}{\eta(0)}
   + \frac{\norm{\Grad K}_{L^\infty}}{\eta(t)}
\leq 
   \Suppveps(0) 
   + \frac{\norm{\Grad K}_{L^\infty}}{\eta(t)} \,.
\end{align}
Combining~\eqref{bound:Supp-v} with the definition of $\EpsU(t)$ in~\eqref{def:U}, we obtain the differential inequality 
\begin{align} \label{ineq:U-Eps}
    \EpsU'(t) 
&\leq 
   \Suppveps(0) + \frac{\norm{\Grad K}_{L^\infty}}{\eta(t)}
   = \Suppveps(0) + \frac{\norm{\Grad K}_{L^\infty}}{\phi(2D_\Eps(t))}.
\end{align}
Note that $\EpsU$ is an increasing function in $t$ and $\phi(r) \downarrow 0$ as $r \to \infty$. To show that $\EpsU$ is bounded independent of $\Eps$,
we let 
\begin{align*}
    \Lambda(r) =  \int_0^r \frac{\phi(s)}{c_2 + c_1 \phi(s)} \ds,
\end{align*}
where $c_1 = \Suppveps(0)$ and $c_2 = \norm{\Grad K}_{L^\infty}$. Then $\Lambda$ is an increasing function and it has the bounds
\begin{align*}
   \frac{1}{c_2 + c_1 \norm{\phi}_{L^\infty}}\int_0^r \phi(s) \ds
\leq \Lambda(r) \leq 
   \frac{1}{c_2}\int_0^r \phi(s) \ds.
\end{align*}
By~\eqref{cond:phi} we have
${\rm Range} (\Lambda) = [0, \infty)$. Meanwhile, for each fixed $\Eps$ and $T$ such that $\EpsU$ is well-defined on $[0, T]$, solving~\eqref{ineq:U-Eps} gives
\begin{align*}
    \Lambda(2\EpsU(t)) 
\leq 
   \Lambda(2\EpsU(0)) 
    + 2 t.
\end{align*}
Note that $\EpsU(0)=\Suppxeps(0)$ is independent of $\Eps$, as it represents the size of the spatial projection of the support of $f_0$ (see \eqref{def:Supp-x-v}). Therefore we obtain the following uniform in $\Eps$ bound:
\begin{align*}
    \EpsU(t) 
\leq \frac{1}{2}\Lambda^{-1} \vpran{\Lambda(2|\Supp f_0|) + 2 T}
\qquad \quad
   \text{for any $t \in [0, T]$}.
\end{align*}
We conclude that by~\eqref{bound:1}, we have that $ \Suppxeps(t)$ is uniformly bounded in $\Eps$ on $[0, T]$.  Moreover, by~\eqref{bound:Supp-v} and the definition of $\eta$, we also have that the support of $v$ is uniformly bounded in $\Eps$ on $[0, T]$. 
\end{proof}

Throughout the rest of the paper we denote by $\Omega(T) \subset \R^d \times \Rd$  the common support of $\Epsf(t,\cdot,\cdot)$ for all $\Eps>0$ and $t \in [0, T]$. Also, we use the following notation for the spatial and velocity projections of $\Omega(T)$:
\begin{align} \label{def:supp-x-v}
\Omega_1(T) = \{ x \in \R^d: (x,v) \in \Omega(T) \} \,,
\qquad
\Omega_2(T) = \{ v \in \R^d: (x,v) \in \Omega(T) \} \,.
\end{align}
\smallskip

Using the uniform bound of $\Epsv$, we can immediately infer the uniform (in $\Eps$) bounds for all the moments of $\Epsf$: 
\begin{Corollary} \label{lem:moments}
Suppose the assumptions in Proposition \ref{prop:unif-supp} hold. Then for any $k \in \N$ there exists a constant $C(T, k, \phi, K)$ such that
\begin{align} \label{bound:moments-general}
    \int_{R^{2d}} |v|^k \Epsf(t, x, v) \dx\dv
\leq C(T, k, \phi, K)
< \infty \,, \qquad k \geq 1 \,.
\end{align}
\end{Corollary}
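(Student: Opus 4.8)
The plan is to leverage the uniform bound on the velocity support established in Proposition~\ref{prop:unif-supp}. Recall that proposition shows that $\Supp \Epsf(t,\cdot,\cdot) \subset \Omega(T)$ for all $\Eps > 0$ and $t \in [0,T]$, where $\Omega(T)$ is a fixed compact set independent of $\Eps$. In particular, its velocity projection $\Omega_2(T)$ defined in~\eqref{def:supp-x-v} is bounded: there exists a constant $V_T = V_T(T, \phi, K)$ such that $|v| \leq V_T$ for every $v \in \Omega_2(T)$, and this $V_T$ is independent of $\Eps$ (explicitly, one may read it off from~\eqref{bound:Supp-v} together with the uniform bound on $\EpsU$ derived at the end of that proof).

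The main step is then an elementary estimate on moments of a probability measure with support contained in a bounded set. For any $k \geq 1$ and any $t \in [0,T]$, since $\Epsf(t,\cdot,\cdot)$ is a probability measure supported in $\Omega(T)$, we have
\begin{align*}
    \int_{\R^{2d}} |v|^k \Epsf(t, x, v) \dx\dv
\leq \sup_{v \in \Omega_2(T)} |v|^k \int_{\R^{2d}} \Epsf(t, x, v) \dx\dv
= V_T^k,
\end{align*}
using that $\Epsf(t,\cdot,\cdot)$ has total mass one. Setting $C(T, k, \phi, K) := V_T^k$ gives~\eqref{bound:moments-general}, with the constant manifestly independent of $\Eps$. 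This completes the argument.

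There is essentially no obstacle here; the corollary is a direct and immediate consequence of the compact-support bound, and the only thing to track is that the constant $V_T$ inherits the $\Eps$-independence from Proposition~\ref{prop:unif-supp}. One could alternatively phrase the same bound by integrating the pointwise inequality $|v|^k \leq (1 + |v|)^k \leq (1 + V_T)^k$ against $\Epsf$, which avoids even mentioning the supremum; either way the proof is a single line once the uniform support bound is in hand.
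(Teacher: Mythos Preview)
Your proof is correct and matches the paper's approach exactly: the paper simply states that the moment bounds follow ``immediately'' from the uniform bound on $\Epsv$ established in Proposition~\ref{prop:unif-supp}, without writing out the details, and your argument supplies precisely those details. There is nothing to add.
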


\begin{rmk}
By doing a more careful estimate,  one can show that~\eqref{bound:moments-general} holds as long as
the initial measure $f_0$ has bounded moments (with no requirement on $f_0$ having compact support). 
Nevertheless,  we content ourselves with Corollary~\ref{lem:moments} 
since the main setting here requires that the initial data is compact supported.
\end{rmk}


\subsection{Uniform bound of the fluctuation} In this part we show a uniform in $\Eps$ bound on the second order fluctuation term defined below by \eqref{def:G-2}.  This bound (and in fact the rate of vanishing of the second order fluctuation with $\Eps$) is essential to show the convergence of $\Epsf$ \cite{Jabin2000, FS2014}. The estimates here are a priori. However, one can apply a density argument (see for example, Section 4 in \cite{FS2014}) to show that they hold for measure-valued solutions $\Epsf(t, \cdot, \cdot) \in \CalP_c(\Rd \times \Rd)$ at each $t \in [0, T]$. 

By previous estimates, we have a priori uniform-in-$\Eps$ bounds for all the moments and the size of the support of $\Epsf$ over an arbitrary finite time interval $[0, T]$. Hence, for $T>0$ fixed,  there exists a constant $C_0(T)$ that depends on $\phi$ and $K$ (but independent of $\Eps$) such that, for $k=1,2$,
\begin{align} \label{bound:moment-f-k-supp}
     \int_{\R^{2d}} |v|^k \Epsf(t, x, v) \dx\dv + \abs{\Supp \Epsf (t,\cdot, \cdot)} \leq C_0(T), \qquad \text{ for all } t \in [0,T].
\end{align}
Also,  there exists a constant $\tilde C_0(T)$ independent of $\Eps$ such that 
\begin{align} \label{bound:phi-T}
     \phi(\abs{x-y}) \geq \tilde C_0(T) > 0,
\qquad \text{for any $x, y \in \Supp \Epsf$, and $ t \in [0, T]$.}
\end{align}

\begin{Theorem} \label{thm:G-2}
Suppose the interaction potential $K$ and the influence function $\phi$ satisfy the assumptions in Proposition \ref{prop:unif-supp}. Let $\Epsf$ be the solution to~\eqref{eq:kinetic} starting from an initial data $f_0 \in \Pc(\Rd \times \Rd)$ that satisfies \eqref{eqn:init-fm-zero}.  Let $\Gfluc$ be the second order fluctuation defined by 
\begin{align} \label{def:G-2}
     \Gfluc(t) = \int_{\R^{2d}} \abs{F[\Epsf]}^2 \Epsf \dx\dv.
\end{align}
Then for any $t \in [0, T]$, there exists $C(T)$ independent of $\Eps$ such that
\begin{align*}
     \Gfluc(t) 
\leq 
    C(T) \, \Eps^2 + \Gfluc(0) \, e^{-\frac{2}{\Eps} t}.
\end{align*}
\end{Theorem}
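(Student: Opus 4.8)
The plan is to differentiate $\Gfluc(t)$ along the characteristic flow of~\eqref{eq:characteristics} and close a Gr\"onwall inequality whose gain comes from the stiff alignment term. Since $\Epsf(t)$ is the push-forward of $f_0$ along this flow and the characteristics satisfy $\frac{{\rm d}v}{{\rm d}t}=-\frac1\Eps F[\Epsf](t,x,v)$, writing $\frac{D}{{\rm d}t}=\del_t+v\cdot\Grad-\frac1\Eps F[\Epsf]\cdot\Gradv$ for the material derivative one has $\Gfluc'(t)=2\int_{\R^{2d}}F[\Epsf]\cdot\frac{D}{{\rm d}t}F[\Epsf]\,\Epsf\,\dx\dv$. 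First I would record the decomposition $F[\Epsf](t,x,v)=\Phi_\Eps(t,x)\,v+b_\Eps(t,x)$, with $\Phi_\Eps=\phi\ast\Epsrho$ as in~\eqref{eqn:PhiEps} and $b_\Eps(t,x)=-\int_{\R^{2d}}\phi(\abs{x-y})\,v^\ast\,\Epsf(t,y,v^\ast)\,\dy\dv^\ast+\Grad K\ast\Epsrho(t,x)$ depending on $x$ alone; differentiating this and using $\frac{Dv}{{\rm d}t}=-\frac1\Eps F[\Epsf]$ gives the identity
\[
   \frac{D}{{\rm d}t}F[\Epsf]=\CalR_\Eps-\frac{\Phi_\Eps}{\Eps}\,F[\Epsf],\qquad
   \CalR_\Eps:=\vpran{\del_t\Phi_\Eps+v\cdot\Grad\Phi_\Eps}v+\del_t b_\Eps+v\cdot\Grad b_\Eps .
\]

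Next I would estimate $\CalR_\Eps$. Integrating~\eqref{eq:f-Eps} in $v$ gives the continuity equation $\del_t\Epsrho+\Grad\cdot(\Epsrho u_\Eps)=0$, $\Epsrho u_\Eps=\int v\,\Epsf\,\dv$; using it together with~\eqref{eq:f-Eps} itself to rewrite the time derivatives of $\Phi_\Eps$ and $b_\Eps$, every term of $\CalR_\Eps$ is bounded on $\Omega(T)$, uniformly in $\Eps$, by quantities involving $\norm{\phi}_{W^{1,\infty}}$, $\norm{\Grad^2K}_{L^\infty}$ and the first and second $v$-moments of $\Epsf$ --- all $\Eps$-independent by Proposition~\ref{prop:unif-supp} and Corollary~\ref{lem:moments}, the second moment entering precisely through the term arising from $\del_t\!\int\phi(\abs{x-y})v^\ast\Epsf\,\dy\dv^\ast$. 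The sole exception is the $\frac1\Eps\Gradv\cdot(F[\Epsf]\Epsf)$ contribution to $\del_t b_\Eps$, which after an integration by parts in $v^\ast$ equals $\frac1\Eps H_\Eps(t,x)$ with $H_\Eps(t,x):=\int_{\R^{2d}}\phi(\abs{x-y})\,F[\Epsf](t,y,v^\ast)\,\Epsf(t,y,v^\ast)\,\dy\dv^\ast$. Thus $\CalR_\Eps=G_\Eps+\frac1\Eps H_\Eps$ with $\norm{G_\Eps}_{L^\infty(\Omega(T))}\le C_1(T)$ uniformly in $\Eps$ (the $\Eps$-uniform bound on the velocity support of $\Epsf$ absorbs the factors of $v$).

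The crucial --- and I expect hardest --- step is to see that the two $O(1/\Eps)$ contributions combine into a favourable negative term rather than a genuinely singular one. Substituting $\CalR_\Eps=G_\Eps+\frac1\Eps H_\Eps$ and using $\int\Phi_\Eps\abs{F[\Epsf]}^2\Epsf\,\dx\dv=\iint\phi(\abs{x-y})\abs{F[\Epsf](t,x,v)}^2\Epsf(t,x,v)\Epsf(t,y,v^\ast)\,\dx\dv\dy\dv^\ast$, one obtains
\[
   \Gfluc'(t)=2\!\int F[\Epsf]\cdot G_\Eps\,\Epsf\,\dx\dv
   +\frac2\Eps\!\iint\!\phi(\abs{x-y})\bigl(F[\Epsf](x,v)\cdot F[\Epsf](y,v^\ast)-\abs{F[\Epsf](x,v)}^2\bigr)\Epsf\,\Epsf ,
\]
where in the double integral the two $\Epsf$ factors carry arguments $(t,x,v)$ and $(t,y,v^\ast)$; symmetrizing it in $(x,v)\leftrightarrow(y,v^\ast)$ turns the last term into $-\frac1\Eps\iint\phi(\abs{x-y})\abs{F[\Epsf](x,v)-F[\Epsf](y,v^\ast)}^2\Epsf\,\Epsf$. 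Using the $\Eps$-uniform lower bound $\phi(\abs{x-y})\ge\tilde C_0(T)$ on the support~\eqref{bound:phi-T} and expanding the square, this is $\le-\frac{2\tilde C_0(T)}{\Eps}\bigl(\Gfluc(t)-\abs{\overline F}^2\bigr)$ with $\overline F:=\int_{\R^{2d}}F[\Epsf]\,\Epsf\,\dx\dv$, and the symmetry of $K$ --- equivalently, conservation of linear momentum, cf.\ Remark~\ref{rmk:zerofmom} --- is exactly what forces $\overline F=0$. Hence the combined stiff term is $\le-\frac{2\tilde C_0(T)}{\Eps}\Gfluc(t)$. (This is the place where the alignment model genuinely requires momentum conservation, in contrast with the pure attraction--repulsion setting of~\cite{FS2014}.)

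For the last step, estimating $2\int F[\Epsf]\cdot G_\Eps\,\Epsf\,\dx\dv$ by Young's inequality as $\le\frac{\tilde C_0(T)}{\Eps}\Gfluc(t)+C(T)\Eps$ (using $\norm{G_\Eps}_{L^\infty(\Omega(T))}\le C_1(T)$ and $\int\Epsf=1$) absorbs half of the stiff term and leaves the differential inequality $\Gfluc'(t)\le-\frac{\tilde C_0(T)}{\Eps}\Gfluc(t)+C(T)\Eps$; Gr\"onwall's lemma then yields $\Gfluc(t)\le C(T)\Eps^2+\Gfluc(0)\,e^{-\tilde C_0(T)t/\Eps}$, which is of the form asserted in the theorem --- exponential-in-$1/\Eps$ decay of the initial fluctuation plus an $O(\Eps^2)$ remainder (the precise constant in the exponent reflecting the lower bound~\eqref{bound:phi-T} on $\phi$ over the support). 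Finally, since the computation above is carried out a priori for smooth $\Epsf$, a density argument as in Section~4 of~\cite{FS2014} transfers it to the measure-valued solutions $\Epsf(t)\in\Pc(\Rd\times\Rd)$.
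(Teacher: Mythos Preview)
Your argument is correct and follows essentially the same route as the paper: compute $\frac{\rm d}{\dt}\Gfluc$, identify the stiff dissipation $-\frac{1}{\Eps}\int\Phi_\Eps|F|^2\Epsf$, isolate the single $O(1/\Eps)$ source term coming from $\del_t$ acting on the alignment part of $F$ (your $\frac1\Eps H_\Eps$, the paper's $R_{21,2}$), combine the two via the symmetrization $\iint\phi(|x-y|)(F\cdot F^\ast-|F|^2)\Epsf\Epsf^\ast=-\tfrac12\iint\phi(|x-y|)|F-F^\ast|^2\Epsf\Epsf^\ast$, and invoke momentum conservation to kill the cross term $|\overline F|^2$. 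The only cosmetic difference is in closing the Gr\"onwall step: the paper bounds the non-stiff remainder via Cauchy--Schwarz as $C(T)\Gfluc^{1/2}$, obtaining $\tfrac12\Gfluc'\le C\Gfluc^{1/2}-\frac{\tilde C_0}{\Eps}\Gfluc$ and then integrating for $\Gfluc^{1/2}$, whereas you bound $G_\Eps$ in $L^\infty(\Omega(T))$ and use Young's inequality to absorb half the dissipation; both yield the stated $O(\Eps^2)+\Gfluc(0)e^{-c\,t/\Eps}$ form.
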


\begin{proof}
We first make the simple observation that by Cauchy-Schwartz and \eqref{bound:moment-f-k-supp},  for $k=0,1$, we have
\begin{align} \label{bound:F-f}
    \int_{\R^{2d}} \abs{v}^k \abs{F[\Epsf]} \Epsf \dx\dv
\leq \vpran{\int_{\R^{2d}}|v|^{2k} \Epsf \dx\dv}^{1/2} \Gfluc(t)^{1/2}
< C_0(T) \, \Gfluc(t)^{1/2}.
\end{align}
Now we estimate the rate of change of $\Gfluc$. To this end, we multiply $\frac{1}{2}\abs{F[\Epsf]}^2$ to equation~\eqref{eq:f-Eps} and integrate in $x, v$.
This gives
\begin{align}
\label{eqn:dG2dt}
    \frac{1}{2}\frac{\rm d}{{\rm d}t} \Gfluc
& = \underbrace{\frac{1}{2} \int_{\R^{2d}} v\Epsf \cdot \Gradx \vpran{\abs{F[\Epsf]}^2}  \dx\dv}_{R_1}
    + \underbrace{\frac{1}{2} \int_{\R^{2d}} \vpran{\del_t \abs{F[\Epsf]}^2} \Epsf \dx\dv}_{R_2} \\
& \quad \underbrace{- \frac{1}{2\Eps} \int_{\R^{2d}} \Gradv\vpran{\abs{F[\Epsf]}^2} \cdot \vpran{F[\Epsf] \Epsf} \dx\dv}_{D_0}. \nn
\end{align}
Below we estimate separately the terms $R_1, R_2$ and $D_0$ indicated above. 

\smallskip
\noindent\underline{Estimate of $D_0$:} We have
\begin{align}
\label{eqn:D0}
    D_0 
    & = - \frac{1}{\Eps} \int_{\R^{2d}}
                \Gradv F : (F \otimes F) \Epsf \dx\dv \nn \\
    & = -\frac{1}{\Eps} \int_{\R^{2d}} \Phi_\Eps(x-y) \abs{F[\Epsf]}^2 \Epsf \dx\dv,
\end{align}
where for the second equality we used \eqref{eq:FfEps} and the definition \eqref{eqn:PhiEps} of $\Phi_\Eps(x)$. By \eqref{bound:phi-T} we conclude that $D_0$ provides dissipation in \eqref{eqn:dG2dt}; the exact expression of $D_0$ is used below to control a certain contribution from $R_2$.

\smallskip
\noindent\underline{Estimate of $R_1$:} From the expression of $R_1$ and \eqref{eq:FfEps}, we infer immediately:
\begin{align*}
    R_1 
    \leq \int_{\R^{2d}} \abs{\Gradx F[\Epsf]} \abs{F[\Epsf]} \abs{v} \Epsf \dx\dv,
\end{align*}
where 
\begin{align*}
  \abs{ \Gradx F[\Epsf]}
\leq 
   \norm{\Gradx\phi}_{L^\infty(\R^d)} 
   \vpran{|v| + \norm{v \Epsf}_{L^1(\R^{2d})}}
   + \norm{\Gradx^2 K}_{L^\infty(\R^d)}.
\end{align*}
Hence, by \eqref{bound:moment-f-k-supp} and Cauchy-Schwartz (see also \eqref{bound:F-f}),  there exists $C_1(T)$ independent of $\Eps$ such that
\begin{align} \label{bound:R-1}
   R_1 \leq C_1(T)  \, \Gfluc^{1/2}(t).
\end{align}

\smallskip
\noindent\underline{Estimate of $R_2$:} The estimate of $R_2$ is more delicate. The expression of $R_2$ and \eqref{eq:FfEps} yields
\begin{align*}
   R_2 
      &= \int_{\R^{2d}} (F[\Epsf] \cdot \del_t F[\Epsf]) \Epsf \dx\dv \\
      & \Denote R_{21} + R_{22},
\end{align*}
where 
\begin{align*}
    R_{21} = \int_{\R^{4d}} \phi(|x-y|) (v-v^\ast) \, \del_t \Epsf(t, y, v^\ast) \cdot F[\Epsf](t, x, v) \Epsf(t, x, v) \dx\dy\dv\dv^\ast
\end{align*}
and
\begin{align*}
    R_{22} = \int_{\R^{2d}} \Gradx K \ast \del_t\Epsrho (t, x) \cdot F[\Epsf](t, x, v) \Epsf(t, x, v) \dx\dv.
\end{align*}
We first estimate $R_{22}$. Integrating~\eqref{eq:f-Eps} in $v$ gives
\begin{align*}
     \del_t \Epsrho = - \Gradx \cdot \vint{v \Epsf}.
\end{align*}
Angle brackets denote integration with respect to $v$.
 
Hence by~\eqref{bound:moment-f-k-supp} and~\eqref{bound:F-f}, there exists $C_2(T)$ independent of $\Eps$ such that
\begin{align} \label{bound:R-22}
   \abs{R_{22}} 
& \leq 
    \int_{\R^{2d}} \abs{\Gradx^2 K \ast \vint{v\Epsf}} \, \abs{F[\Epsf]}(t, x, v) \Epsf(t, x, v) \dx\dv \nn
\\
& \leq  \norm{\Gradx^2 K \ast \vint{v\Epsf}}_{L^\infty(\R^d)}
           \norm{F[\Epsf] \Epsf}_{L^1(\R^{2d})}
\\
& \leq C_2(T) \, \Gfluc^{1/2}(t). \nn
\end{align}
Next we estimate $R_{21}$. Using the kinetic equation~\eqref{eq:f-Eps}, we have
\begin{align*}
    R_{21} 
    = & -\int_{\R^{4d}} \phi(|x-y|) (v-v^\ast) \vpran{v^\ast \cdot \nabla_y \Epsf(t, y, v^\ast)} \cdot F[\Epsf](t, x, v) \Epsf(t, x, v) \dx\dy\dv\dv^\ast
\\
     & + \frac{1}{\Eps}
           \int_{\R^{4d}} \phi(|x-y|) (v-v^\ast)  \nabla_{v^\ast} \cdot \vpran{F[\Epsf] \Epsf}(t, y, v^\ast) \cdot F[\Epsf](t, x, v) \Epsf(t, x, v) \dx\dy\dv\dv^\ast
\\
  & \Denote R_{21,1} + R_{21, 2} \,.
\end{align*}
Integration by parts in $R_{21,1}$ gives
\begin{align*}
    R_{21,1} = \int_{\R^{4d}} \nabla_y\phi(|x-y|) \cdot \vpran{v^\ast \Epsf(t, y, v^\ast)} (v-v^\ast)\cdot F[\Epsf](t, x, v) \Epsf(t, x, v) \dx\dy\dv\dv^\ast.
\end{align*}
Therefore, by~\eqref{bound:moment-f-k-supp} and \eqref{bound:F-f}, there exists $C_3(T)$ independent of $\Eps$ such that
\begin{align} \label{bound:R-21-1}
   \abs{R_{21,1}}
&\leq 
   \norm{\nabla \phi}_{L^\infty}
   \vpran{\norm{v\Epsf}_{L^1(\R^{2d})}
              \norm{v F[\Epsf] \Epsf}_{L^1(\R^{2d})}
              + \norm{|v|^2 \Epsf}_{L^1(\R^{2d})}
                 \norm{F[\Epsf] \Epsf}_{L^1(\R^{2d})}}
\\
&\leq C_3(T) \, \Gfluc^{1/2}(t). \nn
\end{align}
We are left to estimate $R_{21,2}$. Integration by parts in $v^\ast$ gives
\begin{align*}
   R_{21,2}
   = \frac{1}{\Eps}
           \int_{\R^{4d}} \phi(|x-y|)  \vpran{F[\Epsf] \Epsf}(t, y, v^\ast) \cdot \vpran{F[\Epsf] \Epsf}(t, x, v) \dx\dy\dv\dv^\ast \,.
\end{align*}
Combining $R_{21,2}$ with $D_0$ we get
\begin{align*}
    D_0 + R_{21,2}
    = \frac{1}{\Eps}
        \int_{\R^{4d}} \phi(|x-y|)\vpran{F^\ast- F} \cdot F  \Epsf^\ast \Epsf
        \dx\dy\dv\dv^\ast,
\end{align*}
where $F^\ast = F[\Epsf] (t, y, v^\ast)$ and $\Epsf^\ast = \Epsf(t, y, v^\ast)$. By symmetry,  \eqref{bound:phi-T} and the conservation property $\int_{\R^{2d}} F[\Epsf] \Epsf \dx\dv = 0$, we can infer further:
\begin{align} \label{bound:D-0-R-21-2}
    D_0 + R_{21,2}
&   = -\frac{1}{2\Eps}
        \int_{\R^{4d}} \phi(|x-y|) \abs{F^\ast - F}^2 \Epsf^\ast \Epsf \dx\dy\dv\dv^\ast 
\\
&  \leq     - \frac{1}{2\Eps} \tilde{C}_0(T)  
       \int_{\R^{4d}} \abs{F^\ast - F}^2 \Epsf^\ast \Epsf \dx\dy\dv\dv^\ast   \nn \\
&= - \frac{1}{\Eps} \tilde{C}_0(T)   \int_{\R^{2d}} \abs{F[\Epsf]}^2 \Epsf \dx\dv. \nn
\end{align}

Putting together \eqref{bound:R-1}, \eqref{bound:R-22}, \eqref{bound:R-21-1}, and~\eqref{bound:D-0-R-21-2} we get
\begin{align} \label{bound:D-0-R-2}
    R_1 + R_2 + D_0 
\leq 
   C_5(T) \Gfluc^{1/2}(t) - \frac{1}{\Eps} \tilde{C}_0(T) \Gfluc(t).
\end{align}
The following  differential inequality for $\Gfluc$ can now be obtained from \eqref{eqn:dG2dt} and \eqref{bound:D-0-R-2}:
\begin{align*}
   \frac{1}{2} \frac{\rm d}{{\rm d}t} \Gfluc
\leq 
   C_5(T) \Gfluc^{1/2}(t) - \frac{1}{\Eps} \tilde{C}_0(T) \Gfluc(t),
\qquad \quad
   \text{for $t \in [0, T]$}.
\end{align*}
By Gronwall's inequality, we thus have
\begin{align*}
  \Gfluc(t) 
   \leq 
      C(T) \, \Eps^2 + \Gfluc(0) \, e^{-\frac{2}{\Eps} t},
\end{align*}
for some constant $C(T)$ independent of $\Eps$.
\end{proof}


\section{Passage to the limit in the kinetic equation}
\label{sect:limit}
In this section we show the convergence of solutions $\Epsf$ of the kinetic equation~\eqref{eq:kinetic} to solutions $\Epsrho$ of the macroscopic equation~\eqref{eq:macroscopic-full}, as well as the convergence of the characteristics of~\eqref{eq:kinetic} to those of~\eqref{eq:macroscopic-full}. 

\subsection{Convergence to the macroscopic equation} 
The convergence to the macroscopic solution is given in the following theorem.

\begin{Theorem}[Convergence to the macroscopic model]\label{thm:limit}
Suppose $K$ is symmetric, $\Grad K \in
W^{1, \infty}(\Rd)$ and $\phi \in W^{1,\infty}(\mathbb{R}^+)$ is positive,
non-increasing, and satisfies the slow decay condition \eqref{cond:phi}.
For any fixed $T > 0$ and $\Eps > 0$, let $\Epsf \in
C([0, T]; \Pc(\Rd \times \Rd))$ be the unique solution
to~\eqref{eq:kinetic} with initial data $f_0 \in \Pc(\Rd \times \Rd)$ that satisfies \eqref{eqn:init-fm-zero}.
Then there exists a unique $\rho \in C([0, T]; \Pc(\R^d))$ such that
\begin{align*}
     \Epsrho(t, \cdot) \stackrel{w^*}{\longrightarrow} \rho(t ,\cdot)
\qquad
     \text{ in } \CalP(\R^d) \text { as } \Eps \to 0, \text{ for each }t \in [0, T). 
\end{align*}
Moreover, the limiting measure $\rho$ is the unique solution 
to~\eqref{eq:macroscopic-full} with initial density
$\rho_0 = \int_{\R^d} f_0 \dv$.
\end{Theorem}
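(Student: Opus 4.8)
The plan is to pass to the limit in the weak formulation \eqref{eq:f-Eps-weak}, using the uniform-in-$\Eps$ bounds from Section \ref{sect:uniform-bounds} (uniform compact support $\Omega(T)$, uniform moment bounds, and the key fluctuation estimate of Theorem \ref{thm:G-2}) to identify the limit as a solution of the macroscopic model, and then invoke the uniqueness statement of Theorem \ref{thm:macro} to upgrade subsequential convergence to full convergence. First I would extract a weak-$^*$ convergent subsequence: since all $\Epsf(t,\cdot,\cdot)$ are supported in the fixed compact set $\Omega(T)$, they form a tight family in $\Pc(\Rd \times \Rd)$, so (after a diagonal argument in $t$ over a countable dense set, combined with an equicontinuity-in-time estimate coming from the bounded transport velocity and bounded forcing on $\Omega(T)$) there is a subsequence with $\Epsf(t) \stackrel{w^*}{\to} f(t)$ in $\CalP(\Rd \times \Rd)$ for every $t$, and correspondingly $\Epsrho(t) = \int \Epsf(t) \dv \stackrel{w^*}{\to} \rho(t) := \int f(t) \dv$.

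The central step is to show that $f(t,x,v) = \rho(t,x)\,\delta(v-u(t,x))$ is a monokinetic (Dirac-in-velocity) measure whose macroscopic pair $(\rho,u)$ solves \eqref{eq:macroscopic-full}. The monokinetic structure comes from the fluctuation bound: Theorem \ref{thm:G-2} gives $\Gfluc(t) = \int |F[\Epsf]|^2 \Epsf \to 0$ for $t > 0$, and since $F[\Epsf]$ on $\Omega(T)$ controls $\Gradv$ of the force, rewriting $F[\Epsf](t,x,v) = \Phi_\Eps(t,x)(v - u_\Eps(t,x)) + \Gradx K \ast \rho_\Eps - (\text{momentum term})$, one sees, using the uniform lower bound \eqref{bound:phi-T} on $\phi$ and zero momentum \eqref{eqn:feps-zeromom}, that $\int \Phi_\Eps^2 |v - u_\Eps|^2 \Epsf \to 0$, hence $\int |v - u_\Eps(t,x)|^2 \Epsf(t,x,v)\dx\dv \to 0$; this forces the limiting measure to concentrate on the graph of some velocity field. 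To pass to the limit in the momentum equation \eqref{eq:f-Eps-weak}, I would multiply by $\Eps$: the term $\frac{1}{\Eps}\int \nabla_v\psi \cdot F[\Epsf]\Epsf$ then must be bounded (it is, by \eqref{bound:F-f} and $\Gfluc^{1/2} \to 0$ at rate $O(\Eps)$), which kills the transport and time-derivative terms after multiplying by $\Eps$ — more precisely, choosing $\psi = \psi_1(t)\nabla_v\chi(v)\cdot a(x)$-type test functions and using $F[\Epsf]\Epsf \to 0$ in $L^1$, one recovers in the limit the algebraic relation $\Phi(t,x)u(t,x) = \int \phi(|x-y|)\rho(t,y)u(t,y)\dy - \Gradx K \ast \rho$, i.e. \eqref{eq:u-implicit-full}, while zero momentum \eqref{eqn:feps-zeromom} passes to \eqref{eq:conserv-momentum} in the limit since $v\Epsf \stackrel{w^*}{\to} \rho u$; and the continuity equation \eqref{eq:rho-weak} is obtained by taking $\psi$ independent of $v$ in \eqref{eq:f-Eps-weak} and passing to the limit in $\int \Grad\psi \cdot v \Epsf \to \int \Grad\psi \cdot \rho u$ (using $\int v\Epsf(t,x,v)\dv \to \rho u$ weakly, which follows from concentration). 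Regularity of $u$ ($W^{1,\infty}$ on $\Supp\rho$) and the fact that $\rho \in C([0,T];\Pc(\Rd))$ solves \eqref{eqn:rho-transp} follow from Proposition \ref{Prop:2-1}--\ref{Prop:2-2} once $(\rho,u)$ is known to satisfy \eqref{eq:u-implicit-full}--\eqref{eq:conserv-momentum} weakly.

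Finally, since Theorem \ref{thm:macro} asserts that \eqref{eq:macroscopic-full} has a \emph{unique} solution with the given initial data $\rho_0 = \int f_0 \dv$, every weak-$^*$ convergent subsequence of $\Epsrho$ has the same limit $\rho$, so the whole family converges, which gives the stated result. I expect the main obstacle to be the rigorous identification of the limit of the nonlinear forcing term — specifically, showing that $\int \phi(|x-y|)v^\ast\Epsf(t,y,v^\ast)\dy\dv^\ast \to \int\phi(|x-y|)\rho(t,y)u(t,y)\dy$ and that the pointwise (in $x$) relation \eqref{eq:u-implicit-full} genuinely holds for the limit. This requires combining the concentration estimate $\int|v-u_\Eps|^2\Epsf \to 0$ with the weak-$^*$ convergence of $\rho_\Eps$ and some care about the fact that $u_\Eps$ itself is only defined through an implicit relation; one must verify that $u_\Eps \to u$ in a strong enough sense (e.g. in $L^2(\rho\,\dx)$ or uniformly on compact sets, via the invertibility bound \eqref{eq:CalMbound} applied uniformly in $\Eps$) rather than merely that the products converge. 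A secondary technical point is the handling of the initial time $t=0$, where $\Gfluc(0)$ need not be small, which is why the convergence of $\rho_\Eps$ is only claimed for $t \in [0,T)$ with the concentration holding for $t > 0$; the continuity equation and the initial condition are nonetheless recovered because the weak formulation integrates over time and the bad term is $O(e^{-2t/\Eps})$, integrable uniformly.
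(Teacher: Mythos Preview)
Your overall strategy---uniform bounds, vanishing fluctuation, subsequential limit, identification as a solution of \eqref{eq:macroscopic-full}, then uniqueness from Theorem~\ref{thm:macro} to upgrade to full convergence---is correct and matches the paper's. However, the route you take to \emph{identify} the limit differs in an important way.

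The paper does \emph{not} establish the monokinetic structure $f=\rho\,\delta(v-u)$ in this theorem (that is postponed to Theorem~\ref{thm:rho-trans}); it works entirely at the level of the moments $\rho_\Eps$ and $J_\Eps=\int v\,\Epsf\,\dv$. The key device is to show that the \emph{convolutions} $\phi\ast J_\Eps$, $\phi\ast\rho_\Eps$, and $\Gradx K\ast\rho_\Eps$ converge \emph{uniformly} in $C([t_1,T]\times\overline{\Omega_1(T)})$, by proving that scalar quantities such as $\xi_\Eps(t)=\int\psi_2(x)\,J_\Eps(t,x)\,\dx$ are uniformly bounded in $W^{1,\infty}(t_1,T)$ (the weak time derivative is controlled by $\tfrac{1}{\Eps}\Gfluc^{1/2}$, which is bounded for $t\ge t_1>0$ by Theorem~\ref{thm:G-2}) and applying Ascoli--Arzel\`a together with the Lipschitz property of $\phi$ and $\Gradx K$. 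This yields $F[f_{\Eps_k}]\to F_0$ strongly in $C_b(\Omega(T))$; combined with $\Gfluc\to 0$ one gets $\int F_0^2\,f_{\Eps_k}\to 0$, hence by Cauchy--Schwarz $\int\varphi_1(x)\,F_0\,f_{\Eps_k}\to 0$ for every $\varphi_1\in C_b^1$. Integrating in $v$ produces directly the relation $\Phi\,J=\rho\,(\phi\ast J - \Gradx K\ast\rho)$, i.e.\ \eqref{eq:u-implicit-full} with $u:=J/\rho$. No separate concentration argument or convergence statement for $u_\Eps$ is required.

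Your concentration route is viable but contains a slip worth flagging: $\Gfluc\to 0$ gives $\int|v-\tilde u_\Eps|^2\Epsf\to 0$ with $\tilde u_\Eps=(\phi\ast J_\Eps-\Gradx K\ast\rho_\Eps)/\Phi_\Eps$, \emph{not} with $u_\Eps=J_\Eps/\rho_\Eps$. The remainder you wrote as ``$\Gradx K\ast\rho_\Eps-(\text{momentum term})$'' is precisely $\Phi_\Eps u_\Eps-\phi\ast(\rho_\Eps u_\Eps)+\Gradx K\ast\rho_\Eps$, which is not a~priori small for $\Eps>0$. Closing the loop then requires showing that $\tilde u_\Eps$ and $u_\Eps$ share the same limit, and for that you need control of $\phi\ast J_\Eps$---which is exactly the paper's argument. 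So the ``main obstacle'' you correctly identified is the step the paper organizes around, and its moment-level approach via uniform convergence of the convolutions is the cleaner resolution.
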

\begin{proof}
We apply a similar argument as in~\cite{Jabin2000} (see \cite{FS2014} as well). The key ingredient is to study the limit involving the term $F[\Epsf]$ defined in \eqref{eq:FfEps}.  For ease of notation, denote 
\begin{equation}
\label{eq:Jeps}
J_\Eps(t,x) =  \int_{\R^d} v \Epsf(t, x, v) \dv \,.
\end{equation}
Then $F[\Epsf]$ can be written as
\begin{equation}
\label{eq:Ffeps2}
F[\Epsf] (t,x,v)
    = v \, \phi \ast \Epsrho (t,x)  - \phi \ast J_\Eps (t,x) + \nabla_x K \ast \Epsrho(t,x).
\end{equation}
We divide the proof into four steps.  \smallskip

\noindent{\underline{\em Step 1.}} \; 
First we study the passage to the limit of  $\phi \ast J_\Eps$. 
To this end, take the test function in~\eqref{eq:f-Eps-weak} as $\psi(t, x, v) = \psi_1(t) \psi_2(x) \, v\psi_3(v) $ where $\psi_1 \in C^1_c(0, T)$, $\psi_2(x), v\psi_3(v) \in C^1_b(\R^{d})$, and $\psi_3 = 1$ on $\Omega_2(T)$ (recall that $\Omega_2(T)$ is defined in~\eqref{def:supp-x-v}). 
Using this $\psi$ in~\eqref{eq:f-Eps-weak}, we have
\begin{multline*}
 \int_0^T \psi_1'(t)\int_{\R^{d}} \psi_2(x) J_\Eps(t, x) \dx \dt
    + \int_0^T \psi_1(t)\int_{\R^{2d}} \Grad\psi_2 \cdot v \otimes v \Epsf \dx\dv\dt
\\
 - \frac{1}{\Eps} 
         \int_0^T \psi_1(t) \int_{\R^{2d}} 
             \psi_2(x) F[\Epsf] \Epsf \dx\dv\dt = 0.
\end{multline*}
Therefore, if we denote
\begin{align*}
  \etaEps (t) = \int_{\R^{d}} \psi_2(x) J_\Eps(t, x) \dx,
\end{align*}
then its weak derivative is given by
\begin{equation*}
   \etaEps'(t) = \int_{\R^{2d}} \Grad\psi_2 \cdot v \otimes v \Epsf \dx\dv
                     - \frac{1}{\Eps} \int_{\R^{2d}} \psi_2(x) F[\Epsf] \Epsf \dx\dv.
\end{equation*}
By Theorem~\ref{thm:G-2}, there exists a constant $C(t_1, T)$ independent of $\Eps$ such that
\begin{align*}
    \frac{1}{\Eps} \int_{\R^{2d}} \abs{F[\Epsf]} \Epsf \dx\dv
\leq 
  \frac{1}{\Eps} \vpran{\Gfluc(t)}^{1/2} 
\leq
  C(t_1, T),
\end{align*}
for any $t \in [t_1, T]$ where $0 < t_1 < T$. Using this fact, together with Corollary~\ref{lem:moments}, we conclude that $\etaEps \in W^{1, \infty}(t_1, T)$ for any $\psi_2 \in C^1_b(\R^d)$, and
\begin{align*}
   \norm{\etaEps}_{W^{1, \infty}(t_1, T)}
\leq 
   C(t_1, T) \norm{\psi_2}_{W^{1, \infty}}
\qquad
   \text{for any $t_1 > 0$.}
\end{align*}
Hence, $\left\{ \etaEps(t) \right\}_{\Eps>0}$ is uniformly bounded and equicontinuous in $C([t_1,T])$, and by Ascoli-Arzel\`{a} Theorem it converges uniformly on a subsequence. We conclude that for any $\psi_2 \in C^1_b(\R^d)$ and $t_1 \in (0, T]$,  there exists a subsequence $\Eps_k$ and $\xi(t)$ such that
\begin{align} \label{convg:zeta}
    \int_{\R^d} \psi_2(x) J_{\Eps_k} (t, x) \dx
 \to \xi(t)
 \qquad \text{ in } C([t_1, T]) \quad  \textrm{ as } \Eps_k \to 0.
\end{align}

On the other hand, Proposition \ref{prop:unif-supp} provides a uniform (in $\Eps$) bound for the support of $\Epsf$, which in turn gives a uniform bound of $J_\Eps$. Hence, for each $t \in [0, T)$, there exists $J(t,\cdot) \in \CalM(\R^d)$ and a subsequence of $J_{\Eps_k}$, denoted as $J_{\Eps_{k_l}}(t, \cdot)$, such that 
\begin{align} \label{convg:J-0}
     J_{\Eps_{k_l}}(t,\cdot) \stackrel{w^\ast}{\longrightarrow} J(t, \cdot)
\qquad \text{ in } \CalM(\R^d) \quad  \text { as }  \Eps_{k_l} \to 0.
\end{align}
Note that in general, $\Eps_{k_l}$ could depend on $t$. However, comparing~\eqref{convg:zeta} with~\eqref{convg:J-0}, we have that along the subsequence $\Eps_k$ which is independent of $t$,  
\begin{equation}
 \label{convg:zeta-1}
    \int_{\R^d} \psi_2(x) J_{\Eps_k} (t, x) \dx
 \to \int_{\R^d} \psi_2(x) J(t, x) \dx 
 \qquad \text{ in } C([t_1, T]) \quad \text{ as } \Eps_k \to 0.
\end{equation}
By a density argument applied to $\psi_2$, we then have
\begin{equation} 
\label{convg:J-1}
     J_{\Eps_{k}}(t,\cdot) \stackrel{w^\ast}{\longrightarrow} J(t, \cdot)
\qquad \text{ in }  \CalM(\R^d) \quad  \text{ as }  \Eps_{k} \to 0,
\end{equation}
where $t \in [t_1, T]$, $t_1 > 0$ is arbitrary, and $\Eps_k$ is independent of $t$.

Furthermore, the assumption $\phi \in W^{1, \infty}(\R^d)$ guarantees that 
the sequence $\left\{ \phi \ast J_\Eps \right\}_{\Eps>0}$ is uniformly bounded and equicontinuous in $C([t_1, T] \times \overline{\Omega_1(T)})$,
where $\Omega_1(T)$ is defined in~\eqref{def:supp-x-v}. We conclude that
along a subsequence which is still denoted as $\Eps_k$,
\begin{equation}
 \label{property:equi-cont-J}
    \phi \ast J_{\Eps_k} \to \phi \ast J
\qquad
   \text{ in } C\vpran{[t_1, T] \times \overline{\Omega_1(T)}} \quad \text{ as } \Eps_k \to 0,
\end{equation}
where again, $t_1 > 0$ is arbitrary, and $\Eps_k$ is independent of $t$.
\medskip

\noindent{\underline{\em Step 2.}} \, One can use similar arguments to infer convergence results for  the other two terms entering the expression of $F[\Epsf]$ in \eqref{eq:Ffeps2}. These arguments would in fact be identical to those used in \cite{FS2014} for the attractive-repulsive model with no alignment. We summarize the key ideas briefly and refer to the proof of Theorem 5.1 in \cite{FS2014} for details.

By taking a test function of the form $\psi(t, x, v) = \psi_1(t) \psi_2(x) $ in \eqref{eq:f-Eps-weak}, one can show, by similar calculations as in Step 1, that $\int_{\Rd} \psi_2(x) \Epsrho(t,x)$ is uniformly bounded in $W^{1,\infty}(0,T)$. Together with the tightness of the sequence $\Epsrho(t,\cdot)$ and the application of Prokhorov's theorem (cf. \cite[Theorem 4.1]{Patrick1971}), it can be inferred that there exists a probability measure $\rho(t,\cdot) \in \CalP(\R^d)$ such that, along a subsequence of $\Eps_k$ (still denoted as $\Eps_k$), 
\begin{equation} 
\label{eq:conv-Epsrhok}
     \rho_{\Eps_k} (t, \cdot)  \stackrel{w^\ast}{\longrightarrow} \rho(t, \cdot)
\qquad \text{ in } \CalP(\R^d) \quad \text { as } \Eps_{k} \to 0.
\end{equation}
The convergence above holds for each $t \in (0,T]$ and the sequence $\Eps_k$ is independent of $t$.

Also as in Step 1, one can get
\begin{equation}  
\label{property:equi-cont-rho}
    \phi \ast \rho_{\Eps_k} \to \phi \ast \rho
\qquad 
   \text{ in } C\vpran{[t_1, T] \times \overline{\Omega_1(T)}} \quad \text{ as } \Eps_k \to 0,
\end{equation}
where the uniformity with respect to $x$ follows from the equicontinuity in $x$ of $\{\phi \ast \rho_{\Eps}(t,\cdot) \}$.

Finally, regarding the convergence of $\{ \Grad K \ast \rho_{\Eps_k} \} $, the equicontinuity in $x$ is immediate (by the assumption made on the potential). The equicontinuity with respect to time  is slightly more delicate, as $\Grad K$ does not have enough regularity to be used as a test function. Nevertheless, one can overcome this difficulty by a regularization of the kernel (as shown in \cite{FS2014}) and it can  be inferred that
\begin{equation}  
\label{property:equi-gradK-rho}
    \Grad K \ast \rho_{\Eps_k} \to \Grad K \ast \rho
\qquad 
   \text{ in } C\vpran{[t_1, T] \times \overline{\Omega_1(T)}} \quad \text{ as } \Eps_k \to 0,
\end{equation}
with $t_1>0$ arbitrary.
\medskip

\noindent{\underline{\em Step 3.}} \, We now proceed to deriving the limiting equation~\eqref{eq:u-implicit-full} at each $t \in [0, T)$.  For all $(x, v) \in \Omega(T)$, we use \eqref{eq:Ffeps2}, \eqref{property:equi-cont-J}, \eqref{property:equi-cont-rho} and~\eqref{property:equi-gradK-rho} to derive that
\begin{align*}
    F[f_{\Eps_k}](t, \cdot, \cdot) \to F_0(t, \cdot, \cdot)
\qquad 
    \text{ strongly in } C_b(\Omega(T)) \quad \text{ for each } t \in (0,T],
\end{align*}
where 
\begin{align*}
    F_0 (t,x,v )= v \, \phi \ast \rho(t,x) - \phi \ast J(t,x) + \Grad K \ast \rho (t,x).
\end{align*}
Therefore, for each time $t \in (0,T]$, 
\begin{equation*}
    \int_{\R^{2d}} \vpran{F[f_{\Eps_k}] - F_0}^2 f_{\Eps_k} \dx\dv  
\leq \norm{F[f_{\Eps_k}] - F_0}_{C_b(\Omega_T)}^2 \to  0  \qquad  \text{ as } \Eps_k \to 0.
\end{equation*}
Combining the above convergence with the vanishing in $\Eps$ of $\Gfluc(t)$, as established in Theorem~\ref{thm:G-2}, we have 
\begin{equation*}
    \int_{\R^{2d}} F_0^2 \, f_{\Eps_k} \dx \dv \to 0
\qquad
   \text{ as } \Eps_k \to 0.
\end{equation*}
Let $\varphi_1 \in C^1_b(\R^d)$ be arbitrary. Then, by Cauchy-Schwarz inequality, 
\begin{equation*}
    \int_{\R^{2d}} \varphi_1(x) F_0 f_{\Eps_k} \dv \dx \to 0 \qquad  \text{ as } \Eps_k \to 0,
\end{equation*}
and hence,
\begin{equation*}
    \int_{\R^d} \varphi_1(x) \vpran{\Phi J_{\Eps_k} - \rho_{\Eps_k} \vpran{\phi \ast J + \Grad K \ast \rho}} \dx \to  0 \qquad  \text{ as } \Eps_k \to 0.
\end{equation*}
where $\Phi = \phi \ast \rho \in C^1_b(\R^d)$. 
By~\eqref{convg:J-1} and~\eqref{eq:conv-Epsrhok} we can then pass to the limit and get 
\begin{align*}
    \int_{\R^d} \varphi_1(x) \vpran{\Phi J - \rho \vpran{\phi \ast J + \Grad K \ast \rho}} \dx = 0
 \quad
    \text{for any $\varphi_1 \in C^1_b(\Rd)$.}
\end{align*}
This implies that for each $t \in (0, T]$,
\begin{equation}
\label{eqn:phiJ}
    \Phi(t,x) J(t, x) 
 = \rho(t, x) \vpran{\int_{\R^{d}} \phi(|x-y|) J(t,y)\dy   - \Grad K \ast \rho(t,x)}.
\end{equation}
By denoting
\begin{equation}
\label{eqn:defn-uJ}
u(t,x) = J(t,x)/\rho(t,x), \qquad \text{ on the support of } \rho(t,\cdot),
\end{equation}
then equation~\eqref{eq:u-implicit-full} holds on $\Supp\rho$. 

\medskip

\noindent{\underline{\em Step 4.}} \, In this last step we show that the limit $\rho$ satisfies the macroscopic equation  \eqref{eq:rho-full}, as well as the momentum conservation \eqref{eq:conserv-momentum}.  First, following arguments used in \cite{FS2014}, one can generalize \eqref{convg:zeta-1} and show that $\psi_2$ can be allowed to depend on $t$ as well. Specifically, the following convergence holds:
\begin{equation} 
\label{convg:general-w-t-1}
    \int_{\R^d} \psi_2(t,x) J_{\Eps_k} (t, x) \dx
 \to \int_{\R^d} \psi_2(t,x) J (t, x) \dx
 \qquad \text{ in } C([t_1, T]) \quad  \text{ as } \Eps_k \to 0,
\end{equation}
for all $\psi_2 \in C_c([t_1,T),C^1_b(\Rd))$.

Similarly, an analogous result can be shown for the convergence of $\rho_{\Eps_k}$:
\begin{equation}
\label{convg:general-w-t-2}
    \int_{\R^d} \psi_2(t,x) \rho_{\Eps_k} (t, x) \dx
 \to \int_{\R^d} \psi_2(t,x) \rho (t, x) \dx
\qquad 
   \text{ in } C\vpran{[t_1, T]} \quad  \text{ as } \Eps_k \to 0,
\end{equation}
for all $\psi_2 \in C_c([t_1,T),C^1_b(\Rd))$.

Now, in the weak formulation \eqref{eq:f-Eps-weak} for $f_{\Eps_k}$, take a function $\psi$ of the form 
$\psi(t, x, v) = \varphi(t, x) \in C^1_c([0, T), C^1_b(\R^d ))$ to get
\begin{equation} 
\label{eq:f-Eps-weak-1}
\begin{aligned}
& \int_{0}^T \int_{\R^{d}} \del_t \varphi(t, x) \rho_{\Eps_k}(t, x) \dx\dt
    + \int_{0}^T \int_{\R^d} \Grad \varphi\cdot J_{\Eps_k} \dx\dt
    + \int_{\R^d} \varphi(0, x) \rho_0(x, v) \dx=0,
\end{aligned}
\end{equation}
where $\rho_0 = \int_{\R^d} f_0(x, v) \dv$.  

Fix $\Eps_1>0$ small. Then, one can choose $t_1>0$ small enough such that
\begin{equation}
\label{eqn:ineq-t1}
\int_0^{t_1} \int_{\R^d} \abs{\del_t \varphi(t, x) \rho_{\Eps_k}(t, x)}  \dx\dt
+ \int_0^{t_1} \int_{\R^d} \abs{\Grad \varphi \cdot J_{\Eps_k}} \dx\dt
\leq 
 C \norm{\varphi}_{W^{1,\infty}_{t,x}} t_1
<
  \Eps_1.
\end{equation}

With $t_1$ fixed, such that \eqref{eqn:ineq-t1}  is satisfied, break the time integrals $\int_0^T$ in \eqref{eq:f-Eps-weak-1} into two pieces: $\int_0^{t_1}$ and $\int_{t_1}^T$. Due to \eqref{convg:general-w-t-1} and \eqref{convg:general-w-t-2} one can pass to the limit $\Eps_k\to 0$ in the time integrals $\int_{t_1}^T$. Hence, we infer from \eqref{eq:f-Eps-weak-1} and \eqref{eqn:ineq-t1} that for every $\Eps_1>0$, there exists $0<t_1<\Eps_1/ C \norm{\varphi}_{W^{1,\infty}_{t,x}}$ such that
\begin{equation*} 
\left| \int_{t_1}^T \int_{\R^{d}} \del_t \varphi(t, x) \rho(t, x) \dx\dt
    + \int_{t_1}^T \int_{\R^d} \Grad \varphi \cdot J \dx\dt
    + \int_{\R^d} \varphi(0, x) \rho_0(x, v) \dx\right|  \leq \Eps_1.
\end{equation*}
We conclude
\begin{equation} 
\label{eq:f-Eps-weak-2}
\begin{aligned}
& \int_0^T \int_{\R^{d}} \del_t \varphi(t, x) \rho(t, x) \dx\dt
    + \int_0^T \int_{\R^d} \Grad \varphi \cdot J \dx\dt
    + \int_{\R^d} \varphi(0, x) \rho_0(x, v) \dx=0,
\end{aligned} 
\end{equation}
for all  $\varphi(t, x) \in C^1_c([0, T), C^1_b(\R^d ))$.
Note that \eqref{eq:f-Eps-weak-2} is equivalent to \eqref{eq:rho-weak} since  $J=\rho u$ on the support of $\rho$ and $J = 0$ outside the support of $\rho$.

The conservation of momentum \eqref{eq:conserv-momentum} holds since it holds for every $J_{\Eps_k}$. 
Indeed, by \eqref{eqn:feps-zeromom}, we then have 
\begin{align*}
     \int_{\R^{d}} J_{\Eps_k}(t, x) \dx = 0
\qquad
    \text{for all } t\in[0,T] \text { and } \Eps_k>0.
\end{align*}
Consequently, by \eqref{convg:zeta-1} and \eqref{eqn:defn-uJ}, we infer that \eqref{eq:conserv-momentum} is satisfied.

We conclude by noting that the convergence above was derived on a subsequence $\Eps_k$. However, by the uniqueness of the weak solution to~\eqref{eq:macroscopic-full}, as established in Theorem \ref{thm:macro}, the {\em full} sequence $\Epsrho(t, \cdot)$ converges to $\rho(t, \cdot)$ for each $t \in [0, T]$.
\end{proof}


\subsection{Convergence of characteristic paths} In this part we investigate in more detail the convergence of $\Epsf$. In particular, we show that the characteristic paths of~\eqref{eq:kinetic} converge to those of~\eqref{eq:macroscopic-full}. This provides a geometric point of view of the singular limiting process, which also provides an explicit formula for the limit of $\Epsf$. 

The characteristic system \eqref{eq:characteristics} along which the solution $\Epsf$ is transported has the form 
\begin{align} \label{eq:characteristics2}
    &\frac{{\rm d} x}{\dt} = v, \nn
\\
   &\Eps \frac{{\rm d} v}{\dt} 
     = - \phi \ast \Epsrho (t,x) \; v + \phi \ast J_\Eps(t,x)  - \Grad K \ast \rho_\Eps(t, x),
\\
  & (x, v) \big|_{t=0} = (x_0, v_0) \in {\rm supp}f_0. \nn   
\end{align}
The goal here is to pass the limit $\Eps \to 0$ in the characteristic system \eqref{eq:characteristics2} and relate the limit to the characteristic paths of the limiting macroscopic equation \eqref{eq:macroscopic-full}. 

The main tool in showing this limit is a classical result  in singular perturbation theory due to Tikhonov \cite{Tikhonov1952}. A brief account of this result is the following. Using notations from \cite{Vasileva1963}, consider the general system 
\begin{equation}
\label{eqn:shorthand}
\left\{
  \begin{array}{l}
    \dfrac{dx}{dt} = v,\\\\
    \Eps\dfrac{dv}{dt} = \CalF(x,v,t),
  \end{array}
\right.
\end{equation}
where $x,v \in \mathbb{R}^{d}$ and $\Eps>0$.

System~\eqref{eqn:shorthand} is a two-scale equation, where $t$ and $\tau = t/\Eps$ represent the slow and fast time scales, respectively.  The slow dynamics of \eqref{eqn:shorthand} is given by 
\begin{equation}
\label{eqn:shorthand:fo}
\left\{
  \begin{array}{l}
    \dfrac{dx}{dt} = v,\\\\
    v = \Gamma(x,t),
  \end{array}
\right.
\end{equation}
where $v=\Gamma(x,t)$ is a root of the equation
\begin{equation}
\label{eqn:calFroots}
\CalF(x,v,t)=0.
\end{equation}
Roots $v = \Gamma(x,t)$ of \eqref{eqn:calFroots} are in general non-unique. For a fixed configuration $x^\ast$ and  time $t^\ast$, the fast dynamics is defined by
\begin{equation}\label{eqn:adj}
\dfrac{dv}{d\tau}=\CalF(x^\ast,v,t^\ast).
\end{equation}
Note that in the fast system \eqref{eqn:adj}, $x^\ast$ and $t^\ast$ are being regarded as parameters.

Tikhonov's theorem establishes conditions on a root $v = \Gamma(x,t)$ which guarantee that solutions of the two-scale system \eqref{eqn:shorthand} converge, via a fast initial layer governed by \eqref{eqn:adj}, to solutions of the degenerate/slow system \eqref{eqn:shorthand:fo}.

A root $v=\Gamma(x,t)$, with $\Gamma$ defined on  a closed and bounded set $D \subset \R^{d+1}$, is
called \textit{isolated} if there is a $\delta>0$ such that for all $(x,t)\in D$, the only element in $B(\Gamma(x,t),\delta)$ that satisfies $\CalF(x,v,t)=0$ is $v=\Gamma(x,t)$. An isolated root $\Gamma$ is called \textit{positively stable} in $D$, if $v^\ast=\Gamma(x^\ast,t^\ast)$ is an asymptotically stable stationary point of \eqref{eqn:adj} as $\tau\to\infty$, for each $(x^\ast,t^\ast)\in D$. The \textit{domain of influence} of an isolated positively stable root $\Gamma$ is the set of points $(x^\ast,\tilde{v},t^\ast)$ such that the solution of \eqref{eqn:adj} satisfying $v|_{\tau=0}=\tilde{v}$ tends to $v^\ast = \Gamma(x^\ast,t^\ast)$ as $\tau\to\infty$.

Tikhonov's theorem  \cite{Tikhonov1952} states the following:
\begin{Theorem}[Tikhonov \cite{Tikhonov1952, Vasileva1963}] \label{thm:Tikhonov}
Assume that a root $v = \Gamma(x,t)$ of \eqref{eqn:calFroots} is isolated positively stable in some bounded closed domain $D$. Consider a point $(x_0,v_0,t_0)$ in the domain of influence of this root, and assume that the slow equation  \eqref{eqn:shorthand:fo} has a solution
 $x(t)$ initialized at $x(t_0) = x_0$, such that $(x(t),t)$ lies in $D$ for all $t\in[t_0,T]$. Then, as $\Eps\to 0$, the solution 
$(x_\Eps (t),v_\Eps (t))$ 
of \eqref{eqn:shorthand} initialized at $(x_0,v_0)$, converges to $(x(t),v(t)):=(x(t),\Gamma(x(t),t))$ in the following sense:

i) $\displaystyle\lim_{\Eps\to 0}v_\Eps(t)=v(t)$ \text{ for all } $t\in(t_0,T^*]$, \text {and }
\smallskip

ii) $\displaystyle\lim_{\Eps\to 0}x_\Eps(t)=x(t)$ \text{ for all } $t\in[t_0,T^*]$,

\smallskip
for some $T^*<T$.
\end{Theorem}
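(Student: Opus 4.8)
The plan is to reconstruct the classical proof of Tikhonov's theorem in the two-timescale form stated above. The argument splits $[t_0,T^*]$ into two regimes: an initial \emph{boundary layer} of width $O(\Eps)$ in the slow time $t$, during which the fast variable $v_\Eps$ relaxes onto the slow manifold $\{v=\Gamma(x,t)\}$ while $x_\Eps$ moves only $O(\Eps)$; and an \emph{outer regime} on which $(x_\Eps,v_\Eps)$ is confined to a thin tube around the reduced trajectory $\bigl(x(t),\Gamma(x(t),t)\bigr)$, so that the slow variable converges by a Gronwall estimate. The central quantity throughout is the deviation $w_\Eps(t):=v_\Eps(t)-\Gamma(x_\Eps(t),t)$.

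\emph{Boundary layer.} First I would rescale $\tau=(t-t_0)/\Eps$, so that the $v$-equation becomes $dv/d\tau=\CalF(x_\Eps,v,t)$. On any bounded $\tau$-interval this is an $O(\Eps)$ perturbation of the frozen adjoint equation \eqref{eqn:adj} with parameters $(x_0,t_0)$, since $x_\Eps$ drifts by $O(\Eps\tau)$ and $t=t_0+O(\Eps\tau)$. Because $(x_0,v_0,t_0)$ lies in the domain of influence of the positively stable root, the solution of \eqref{eqn:adj} with $v|_{\tau=0}=v_0$ tends to $\Gamma(x_0,t_0)$; by continuous dependence on data and parameters, for every $\delta>0$ there are $\tau_\delta$ and $\Eps_0>0$ with $|w_\Eps(t_0+\Eps\tau_\delta)|<\delta$ for all $\Eps<\Eps_0$. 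Hence after a time $O(\Eps)$ the trajectory has entered a $\delta$-tube about the slow manifold.

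\emph{Forward invariance of the tube.} Positive stability means $\Gamma(x,t)$ is an asymptotically stable equilibrium of \eqref{eqn:adj} for each $(x,t)\in D$. By a parametrized converse-Lyapunov construction I would obtain a function $V(w;x,t)$, continuous in $(x,t)\in D$, with $c_1|w|^2\le V\le c_2|w|^2$ and $\nabla_w V\cdot\CalF\bigl(x,\Gamma(x,t)+w,t\bigr)\le-c_3|w|^2$ for $|w|\le\delta$, uniformly over $D$. Writing the exact equation for the deviation,
\[
   \Eps\,\dot w_\Eps
   = \CalF\bigl(x_\Eps,\Gamma(x_\Eps,t)+w_\Eps,t\bigr)
     - \Eps\bigl(\partial_t\Gamma+\partial_x\Gamma\cdot v_\Eps\bigr),
\]
the last term is bounded on $D$, so $\tfrac{d}{dt}V(w_\Eps;x_\Eps,t)\le-\tfrac{c_3}{\Eps}|w_\Eps|^2+C(1+|w_\Eps|)\le-\tfrac{c}{\Eps}V+C'$. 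Once $w_\Eps$ has entered the $\delta$-ball this both keeps it there (for $\Eps$ small) and forces $V(w_\Eps(t))\le C''\Eps$, i.e.\ $\sup_{t\in[t_0+\Eps\tau_\delta,\,T^*]}|w_\Eps(t)|=O(\sqrt\Eps)$. I expect this step to be the main obstacle: extracting a Lyapunov function with constants \emph{uniform} in $(x,t)$ from a purely qualitative stability hypothesis, and doing the bookkeeping that confirms the tube is invariant under the true $\Eps$-dependent flow (not the frozen one), all while keeping $(x_\Eps(t),t)\in D$.

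\emph{Slow convergence and conclusion.} On $[t_0,T^*]$ I would write $\dot x_\Eps=\Gamma(x_\Eps,t)+w_\Eps$ and subtract $\dot x=\Gamma(x,t)$; Lipschitz continuity of $\Gamma$ on $D$, the vanishing initial error, the $O(\Eps)$ excursion of $x_\Eps$ during the layer, and $\|w_\Eps\|_{L^\infty}\to0$ give, via Gronwall, $\sup_{[t_0,T^*]}|x_\Eps(t)-x(t)|\to0$, which is (ii). For (i), any $t\in(t_0,T^*]$ bounded away from $t_0$ satisfies $v_\Eps(t)=\Gamma(x_\Eps(t),t)+w_\Eps(t)\to\Gamma(x(t),t)=v(t)$. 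Finally $T^*<T$ is chosen so that the reduced solution $(x(t),t)$ stays in the interior of $D$ with a fixed margin on $[t_0,T^*]$; a standard bootstrap then shows $(x_\Eps(t),t)\in D$ on $[t_0,T^*]$ for $\Eps$ small, which is what makes all the above estimates self-consistent and in particular rules out finite-time blow-up of $(x_\Eps,v_\Eps)$.
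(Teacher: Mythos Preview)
The paper does not prove this theorem. Theorem~\ref{thm:Tikhonov} is stated as a classical result of Tikhonov, attributed to \cite{Tikhonov1952, Vasileva1963}, and is invoked as a black box in the proof of Theorem~\ref{thm:conv-traj}; no argument for it is given in the paper. So there is no ``paper's own proof'' to compare your attempt against.

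That said, your sketch is a faithful outline of the standard proof: a boundary-layer analysis in the fast time $\tau=(t-t_0)/\Eps$ that drives $v_\Eps$ into a $\delta$-neighbourhood of the slow manifold, a Lyapunov-type argument showing forward invariance of that tube and the $O(\sqrt{\Eps})$ decay of the deviation $w_\Eps=v_\Eps-\Gamma(x_\Eps,t)$, and a Gronwall estimate on the slow variable. You correctly flag the delicate point, namely obtaining a Lyapunov function with constants uniform in the parameters $(x,t)\in D$ from a purely qualitative asymptotic-stability hypothesis; this is exactly where the classical proofs do real work (and why some textbook statements add mild regularity assumptions on $\CalF$ and $\Gamma$). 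One small caveat: you freely use $\partial_t\Gamma$ and $\partial_x\Gamma$ and Lipschitz continuity of $\Gamma$, which are not explicit in the hypotheses as stated here; in the original sources these follow from smoothness assumptions on $\CalF$ and the implicit function theorem applied to the isolated root, so you should make that step explicit if you want a self-contained proof.
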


\begin{rmk}\label{remark:boundary layer}
The convergence of $v_\Eps(t)$ to $v(t)$ occurs via a fast initial layer and normally does not occur at the initial time $t_0$, unless the initial data satisfies $v_0=\Gamma(x_0,t_0)$.
\end{rmk}

Our main result concerning convergence of characteristic paths is the following theorem.
\begin{Theorem}[Convergence of characteristic paths]
\label{thm:conv-traj} Assume $K$ and $\phi$ satisfy the same assumptions as in Theorem \ref{thm:limit}. Consider the measure-valued solution $\Epsf$ to \eqref{eq:kinetic} and the characteristic path $(x_\Eps(t),v_\Eps(t))$ that originates  from some $(x_0,v_0) \in \operatorname{supp} f_0$ at $t=0$. Then,
\begin{equation}
\label{eqn:conv-trajx}
\lim_{\Eps \to 0} (x_\Eps(t), v_\Eps(t)) 
= (x(t), u(t, x(t)))  \quad \text{ for all } \quad 0\leq t \leq T,
\end{equation}
where $x(t)$ is the characteristic trajectory of the limiting macroscopic equation \eqref{eq:macroscopic-full} that starts at $x_0$, and $u(t,x)$ is the velocity field defined in \eqref{eqn:defn-uJ}. In particular, $x(t)$ satisfies \eqref{eq:char-limit}, and $u$ solves \eqref{eq:u-implicit-full}.
\end{Theorem}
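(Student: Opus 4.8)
The plan is to read the characteristic system \eqref{eq:characteristics2} as a singularly perturbed slow--fast system and to invoke Tikhonov's theorem (Theorem~\ref{thm:Tikhonov}). In the notation of \eqref{eqn:shorthand}, the fast right-hand side here is
\begin{align*}
\CalF_\Eps(x,v,t) = - \phi \ast \Epsrho(t,x)\, v + \phi \ast J_\Eps(t,x) - \Grad K \ast \Epsrho(t,x),
\end{align*}
which, unlike the abstract statement of Theorem~\ref{thm:Tikhonov}, still depends on $\Eps$ through $\Epsrho$ and $J_\Eps$. The first step is therefore to pass to the limit in the coefficients: by Steps~1 and 2 in the proof of Theorem~\ref{thm:limit} (the uniform convergences \eqref{property:equi-cont-J}, \eqref{property:equi-cont-rho} and \eqref{property:equi-gradK-rho}), along the subsequence $\Eps_k$ these coefficients converge uniformly on $[t_1,T]\times\overline{\Omega_1(T)}$ for every $t_1>0$, so that $\CalF_\Eps \to \CalF$ with
\begin{align*}
\CalF(x,v,t) = -\Phi(t,x)\, v + \phi\ast J(t,x) - \Grad K\ast\rho(t,x), \qquad \Phi=\phi\ast\rho.
\end{align*}
The unique root of $\CalF(x,v,t)=0$ is $v=\Gamma(x,t):=\Phi(t,x)^{-1}\bigl(\phi\ast J(t,x)-\Grad K\ast\rho(t,x)\bigr)$, and by the limiting identity \eqref{eqn:phiJ} together with \eqref{eqn:defn-uJ} one has $\Gamma(x,t)=u(t,x)$ on $\Supp\rho(t,\cdot)$.

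I would then verify Tikhonov's hypotheses for the limiting problem on the closed bounded set $D=[t_1,T]\times\overline{\Omega_1(T)}$. The root $\Gamma$ is \emph{isolated}: $\CalF$ is affine in $v$ with leading coefficient $-\Phi(t,x)\le -\tilde C_0(T)<0$ by \eqref{bound:phi-T}, so it has a single zero for each $(x,t)$. It is \emph{positively stable}: the fast equation \eqref{eqn:adj} is the linear system $dv/d\tau=-\Phi(t^\ast,x^\ast)\bigl(v-\Gamma(x^\ast,t^\ast)\bigr)$, globally exponentially attracting toward $\Gamma(x^\ast,t^\ast)$, so its domain of influence is all of $\R^d$ in $v$ and in particular contains every $(x_0,v_0)$. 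The reduced equation \eqref{eqn:shorthand:fo} is $dx/dt=u(t,x(t))$, $x(0)=x_0$, i.e. exactly the macroscopic characteristic equation \eqref{eq:char-limit}, which by Theorem~\ref{thm:macro} (and the Lipschitz bound of Proposition~\ref{Prop:2-1}) has a unique global solution $x(t)$; moreover $x(t)\in\Supp\rho(t,\cdot)\subset\overline{\Omega_1(T)}$ by Definition~\ref{defn:rho-transp}, so $(x(t),t)$ stays in $D$. Continuity of $\CalF$ and $\Gamma$ on $D$ follows from $\phi,\Grad K\in W^{1,\infty}$, $\rho\in C([0,T];\Pc(\Rd))$, $\phi\ast J\in C([t_1,T]\times\overline{\Omega_1(T)})$ and $\Phi\ge\tilde C_0(T)$.

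The main obstacle is that Theorem~\ref{thm:Tikhonov} concerns a \emph{fixed} vector field, whereas the true characteristics solve $\Eps\,\frac{dv}{dt}=\CalF_\Eps(x,v,t)$. I would resolve this by comparison: let $(\bar x_\Eps,\bar v_\Eps)$ solve the frozen system $\Eps\,\frac{dv}{dt}=\CalF(x,v,t)$ with the same initial data; applying Theorem~\ref{thm:Tikhonov} (iterated in time, which is legitimate since $u\in W^{1,\infty}$ keeps the reduced flow well-posed on all of $[0,T]$) gives $\bar x_\Eps(t)\to x(t)$ on $[0,T]$ and $\bar v_\Eps(t)\to u(t,x(t))$ on $(0,T]$. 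One then writes $\Eps\,\frac{d}{dt}(v_\Eps-\bar v_\Eps)=-\phi\ast\Epsrho(t,x_\Eps)\,(v_\Eps-\bar v_\Eps)+O\bigl(|x_\Eps-\bar x_\Eps|\bigr)+o(1)$, where the $o(1)$ stems from the uniform convergence $\CalF_\Eps\to\CalF$ on $D$ and the dissipative coefficient is $\ge\tilde C_0(T)>0$; combined with $\frac{d}{dt}(x_\Eps-\bar x_\Eps)=v_\Eps-\bar v_\Eps$, a standard singular-perturbation Gronwall argument forces $|x_\Eps-\bar x_\Eps|,|v_\Eps-\bar v_\Eps|\to 0$ uniformly on $[t_1,T]$. (Equivalently, one may appeal to a parameter-robust version of Tikhonov's theorem, available here because the fast field is affine in $v$ with a uniform dissipativity bound.) Putting the two convergences together yields \eqref{eqn:conv-trajx} for $t\in(0,T]$; the $x$-component extends to $t=0$ since $x_\Eps(0)=x_0=x(0)$ and the curves $x_\Eps$ are equi-Lipschitz in $t$ (velocities bounded uniformly by the size of $\Omega_2(T)$). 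Passage from the subsequence $\Eps_k$ to the full sequence follows from uniqueness of the limit, as in Theorem~\ref{thm:limit}. Finally, consistently with Remark~\ref{remark:boundary layer}, one should note that at $t=0$ the $v$-component converges only under the compatibility condition $v_0=u(0,x_0)$ (well-prepared data); otherwise a fast initial layer is present, and the $v$-part of \eqref{eqn:conv-trajx} is to be read for $t\in(0,T]$.
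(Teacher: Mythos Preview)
Your proposal is correct and follows essentially the same route as the paper: introduce the auxiliary ``frozen'' system with $(\rho,J)$ in place of $(\Epsrho,J_\Eps)$, apply Tikhonov's theorem to it (verifying the isolated, positively stable root $\Gamma=u$), and then close the gap between the true and frozen characteristics via the dissipative structure of the fast equation combined with the uniform convergences \eqref{property:equi-cont-J}--\eqref{property:equi-gradK-rho} and a Gronwall argument. The paper carries out the comparison step a bit more explicitly (Duhamel formula for $v_\Eps-\tilde v_\Eps$, then a change of order of integration before Gronwall), and you are actually more careful than the paper in flagging that the $v$-convergence at $t=0$ requires well-prepared data.
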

\begin{proof}
The characteristic system \eqref{eq:characteristics2} has a right-hand-side that depends on $\Eps$ and Tikhonov's theorem does not apply directly. To circumvent this, we replace $(\Epsrho, J_\Eps)$ by $(\rho, J)$ 
in  \eqref{eq:characteristics2} to arrive at the following system:
 \begin{align} \label{eq:char-noeps}
    &\frac{{\rm d} x}{\dt} = v, \nn
\\
   &\Eps \frac{{\rm d} v}{\dt} 
     = - \phi \ast \rho (t,x) \; v + \phi \ast J(t,x)  - \Grad K \ast \rho(t, x),
\\
  & (x, v) \big|_{t=0} = (x_0, v_0) \in {\rm supp}f_0. \nn
\end{align}

Theorem \ref{thm:Tikhonov} applies to system \eqref{eq:char-noeps}. Indeed, \eqref{eq:char-noeps} can be written in the form \eqref{eqn:shorthand}, with 
\[
\CalF(x,v,t) =  - \phi \ast \rho (t,x) \; v + \phi \ast J(t,x)  - \Grad K \ast \rho(t, x).
\]
For a fixed spatial configuration $x$ and time $t$, the root $v= \Gamma(x,t)$ given by 
\begin{equation}
\label{eqn:Gamma-u}
\Gamma(x,t) = \frac{1}{ \phi \ast \rho (t,x)} (\phi \ast J(t,x)  - \Grad K \ast \rho(t, x))
\end{equation}
is unique, hence isolated. It is also immediate that for a fixed spatial configuration $x^\ast$ and time $t^\ast$, the corresponding fast equation \eqref{eqn:adj} has a globally attracting equilibrium $v^\ast = \Gamma(x^\ast,t^\ast)$. Consequently, $v^\ast$ is positively stable and its domain of influence is $\{x^\ast\} \times \Rd \times \{t^\ast \}$.

Denote by $(\tilde{x}_\Eps(t),\tilde{v}_\Eps(t))$ the solution of
\eqref{eq:char-noeps} that originates from $(x_0,v_0)$. Then, by Theorem
\ref{thm:Tikhonov}, the convergence in \eqref{eqn:conv-trajx}, which needs to be shown for ${x}_\Eps(t)$ and ${v}_\Eps(t)$, holds for $\tilde{x}_\Eps(t)$ and $\tilde{v}_\Eps(t)$ (note that by \eqref{eqn:phiJ}, the root $\Gamma$ in \eqref{eqn:Gamma-u} is in fact the velocity $u$ defined by \eqref{eqn:defn-uJ}). Hence, it would be enough to show that for a fixed $t>0$,
\begin{equation}
\label{conv:xv}
\lim_{\Eps \to 0} |x_\Eps(t)-\tilde{x}_\Eps(t)| = 0 \quad \text{ and } \quad \lim_{\Eps \to 0} |v_\Eps(t)-\tilde{v}_\Eps(t)| = 0.
\end{equation}
Indeed, from \eqref{eq:characteristics2} and \eqref{eq:char-noeps} we get
\begin{align*}
\Eps \frac{{\rm d}}{\dt} (v_\Eps(t)-\tilde{v}_\Eps(t)) 
&= - \phi \ast \Epsrho (t,x_\Eps(t)) v_\Eps(t) + \phi \ast \rho (t,\tilde{x}_\Eps(t))\tilde{v}_\Eps(t)  
\\
&\hspace{-1.0cm}
+ \phi \ast J_\Eps (t,x_\Eps(t))- \phi\ast J(t,\tilde{x}_\Eps(t)) -\nabla_x K \ast \Epsrho (t,x_\Eps(t)) + \nabla_x K \ast \rho (t,\tilde{x}_\Eps(t)).
\end{align*}
To the first line on the right-hand-side we add and subtract $\phi \ast \Epsrho (t,x_\Eps(t)) \tilde{v}_\Eps(t)$. Then, we can write
\begin{equation}
\label{eq:dtdiff1}
\Eps \frac{{\rm d}}{\dt} (v_\Eps(t)-\tilde{v}_\Eps(t)) = - \phi \ast \Epsrho (t,x_\Eps(t)) \left ( v_\Eps(t) - \tilde{v}_\Eps(t) \right) + \Rem (t),
\end{equation}
where
\begin{align}
\label{eq:calG}
\Rem (t) &= \left( \phi \ast \Epsrho (t,x_\Eps(t))- \phi \ast \rho (t,\tilde{x}_\Eps(t)) \right)  \tilde{v}_\Eps(t) \\
&+  \phi \ast J_\Eps (t,x_\Eps(t))- \phi\ast J(t,\tilde{x}_\Eps(t)) -\nabla_x K \ast \Epsrho (t,x_\Eps(t)) + \nabla_x K \ast \rho (t,\tilde{x}_\Eps(t)). \nonumber
\end{align}
By integrating \eqref{eq:dtdiff1} one finds
\begin{equation}
\label{eq:dtdiff2}
v_\Eps(t)-\tilde{v}_\Eps(t) = \frac{1}{\Eps} \int_0^t e^{-\frac{1}{\Eps} \int_s^t  \phi \ast \Epsrho (\tau,x_\Eps(\tau)) d \tau} \,\Rem(s) ds, \qquad \text{ for all } t \in [0,T].
\end{equation}
Using Proposition \ref{prop:unif-supp}, we have
\[
\phi \ast \Epsrho (\tau,x_\Eps(\tau)) \geq C_1, \qquad \text{ for all } \tau \in [0,T],
\]
where the constant $C_1$ depends on $T$, $\phi$ and $K$, but not on $\Eps$. Consequently, from \eqref{eq:dtdiff2}, we get
\begin{equation}
\label{eq:dtdiff3}
| v_\Eps(t)-\tilde{v}_\Eps(t) | \leq  \frac{1}{\Eps} \int_0^t e^{-\frac{C_1}{\Eps} (t-s)} \, | \Rem(s)| ds, \qquad \text{ for all } t \in [0,T].
\end{equation}

We now focus on estimating the right-hand-side of \eqref{eq:dtdiff3}. Inspect $\Rem$ given by \eqref{eq:calG}, in particular the term on the first line of the right-hand-side. The term $\tilde{v}_\Eps(t)$ is uniformly bounded in $\Eps$ (to show this, one can follow for instance the arguments used to prove Proposition \ref{prop:unif-supp}). To estimate the term in round brackets, add and subtract $\phi \ast \rho (t,x_\Eps(t))$. Then, by triangle inequality and the Mean Value Theorem, we get
\begin{align*}
|\phi \ast \Epsrho (t,x_\Eps(t))- \phi \ast \rho (t,\tilde{x}_\Eps(t))| & \leq 
\sup_{t\in[0,T]} \| \phi \ast (\Epsrho - \rho)\|_{L^\infty(\Omega_1(T))} \\
&+ \sup_{t\in[0,T]} \| \nabla \phi \ast \rho \|_{L^\infty(\Omega_1(T))} |x_\Eps(t)-\tilde{x}_\Eps(t)|
\end{align*}
Estimates entirely similar to the one above can be made for the terms on the second line of the right-hand-side of \eqref{eq:calG}, which lead to:
\begin{align*}
|\phi \ast J_\Eps (t,x_\Eps(t))- \phi \ast J (t,\tilde{x}_\Eps(t))| & \leq 
\sup_{t\in[0,T]} \| \phi \ast (J_\Eps - J)\|_{L^\infty(\Omega_1(T))} \\
&+ \sup_{t\in[0,T]} \| \nabla \phi \ast J \|_{L^\infty(\Omega_1(T))} |x_\Eps(t)-\tilde{x}_\Eps(t)|,
\end{align*}
and
\begin{align*}
|\nabla_x K \ast \Epsrho (t,x_\Eps(t))- \nabla_x K \ast \rho (t,\tilde{x}_\Eps(t))| & \leq 
\sup_{t\in[0,T]} \| \nabla_x K \ast (\Epsrho - \rho)\|_{L^\infty(\Omega_1(T))} \\
&+ \sup_{t\in[0,T]} \| \nabla^2_x K \ast \rho \|_{L^\infty(\Omega_1(T))} |x_\Eps(t)-\tilde{x}_\Eps(t)|.
\end{align*}
where $\Omega_1(T)$ is defined in~\eqref{def:supp-x-v}.

By the uniform convergences of $\phi \ast \Epsrho$, $\phi \ast J_\Eps$, and $\nabla_x K \ast \Epsrho$, as established in the proof of Theorem \ref{thm:limit}, and by assumptions we made on $\phi$ and $K$, we can group the three estimates from above and get from \eqref{eq:calG},
\begin{equation}
\label{eqn:estG}
|\Rem(t) | \leq C_\Eps + C_2 |x_\Eps(t) - \tilde{x}_\Eps(t)|,
\end{equation}
where $C_\Eps$ and $C_2$ are constants that  depends on  $T$, $\phi$ and $K$. Moreover, $C_\Eps \to 0$ as $\Eps \to 0$.

Applying \eqref{eqn:estG} in \eqref{eq:dtdiff3}, we get
\begin{equation*}
| v_\Eps(t)-\tilde{v}_\Eps(t) | \leq \frac{C_\Eps}{C_1} +  \frac{C_2}{\Eps} \int_0^t e^{-\frac{C_1}{\Eps} (t-s)}  |x_\Eps(s) - \tilde{x}_\Eps(s)| ds, \qquad \text{ for all } t \in [0,T].
\end{equation*}

Using \eqref{eq:characteristics2} and \eqref{eq:char-noeps}, we further find
\begin{equation}
\label{eq:dtdiff4}
| v_\Eps(t)-\tilde{v}_\Eps(t) | \leq \frac{C_\Eps}{C_1} +  \frac{C_2}{\Eps} \int_0^t e^{-\frac{C_1}{\Eps} (t-s)}  \int_0^s |v_\Eps(\tau) - \tilde{v}_\Eps(\tau)| \, d\tau \, ds, \qquad \text{ for all } t \in [0,T].
\end{equation}
Change order of integration in the double integral on the right-hand-side of \eqref{eq:dtdiff4} to get
\begin{equation*}
| v_\Eps(t)-\tilde{v}_\Eps(t) | \leq \frac{C_\Eps}{C_1} +  \frac{C_2}{\Eps} \int_0^t  |v_\Eps(\tau) - \tilde{v}_\Eps(\tau)| \int_{\tau}^t e^{-\frac{C_1}{\Eps} (t-s)}  \, ds \, d\tau, \qquad \text{ for all } t \in [0,T].
\end{equation*}
After evaluating the integral in $s$ we finally arrive at
\begin{equation*}
| v_\Eps(t)-\tilde{v}_\Eps(t) | \leq \frac{C_\Eps}{C_1} +  \frac{C_2}{C_1} \int_0^t  |v_\Eps(\tau) - \tilde{v}_\Eps(\tau)|  d\tau, \qquad \text{ for all } t \in [0,T].
\end{equation*}
The second limit in \eqref{conv:xv} now follows from the integral form of Gronwall's inequality, given that $C_\Eps \to 0$ as $\Eps \to 0$. The convergence of trajectories follows from here as well.
\end{proof}

The convergence of characteristic paths yields the limiting flow map $\mathcal{T}^t: x_0 \to x(t)$. 
It is convenient in the calculations below to use the notation $x(t;x_0)$ to denote the limiting characteristic path $x(t)$ that starts at $x_0$. 

The next result characterizes the limiting densities.


\begin{Theorem}[Characterization of the limiting densities] 
\label{thm:rho-trans}
The limiting macroscopic density $\rho$ identified in Theorem \ref{thm:limit} is the push-forward of the initial density $\rho_0$ by the limiting flow map $\mathcal{T}^t$, 
\begin{equation}
\label{eqn:rho-trans}
\rho = \mathcal{T}^t \# \rho_0.
\end{equation}
In addition,  for each $t\in [0,T)$, $\Epsf$ converges weak-$^\ast$ to a probability density $f(t,\cdot,\cdot)\in \CalP(\Rd \times \Rd)$:
\begin{equation}
\label{eqn:fconv}
   \Epsf \stackrel{w^\ast}{\longrightarrow} f
\qquad \text{ in } \CalP(\Rd \times \Rd) \quad \text{ as } \Eps \to 0.
\end{equation}
The limiting density $f$, with first marginal $\rho$, is given explicitly by: 
\begin{equation}
\label{eqn:f-delta}
f(t,x,v) = \rho(t,x) \delta (v- u(t,x)),
\end{equation}
where $(\rho,u)$ is the unique solution of \eqref{eq:macroscopic-full}.
\end{Theorem}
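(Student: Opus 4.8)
The plan is to pass to the limit in the mass-transport representations of both $\Epsf$ and $\rho$, feeding in the trajectory convergence of Theorem~\ref{thm:conv-traj}. For~\eqref{eqn:rho-trans} I would first identify the limiting flow map $\mathcal{T}^t$ with the characteristic flow of the macroscopic equation: by Theorem~\ref{thm:limit}, $\rho$ is the unique solution of~\eqref{eq:macroscopic-full}, hence satisfies~\eqref{eqn:rho-transp}, i.e.~$\rho(t)=\mathcal{T}^t_{u[\rho]}\#\rho_0$, where $u[\rho]$ is the field produced by~\eqref{eq:u-implicit-full}--\eqref{eq:conserv-momentum}; and Theorem~\ref{thm:conv-traj} shows the limiting path $x(t;x_0)$ solves~\eqref{eq:char-limit} with the velocity $u$ of~\eqref{eqn:defn-uJ}, which by Step~3 of the proof of Theorem~\ref{thm:limit} is exactly $u[\rho]$. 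Uniqueness for the ODE~\eqref{eq:char-limit} then forces $\mathcal{T}^t=\mathcal{T}^t_{u[\rho]}$, and so $\rho=\mathcal{T}^t\#\rho_0$.

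Next, for~\eqref{eqn:fconv} I would pass to the limit in the push-forward identity~\eqref{eqn:trans1}. Fix $t\in(0,T)$ and $\zeta\in C_b(\Rd\times\Rd)$; then
\[
\int_{\R^{2d}}\zeta(x,v)\,\Epsf(t,x,v)\dx\dv=\int_{\R^{2d}}\zeta\bigl(x_\Eps(t;X,V),v_\Eps(t;X,V)\bigr)\,f_0(X,V)\,dX\,dV .
\]
The integrand is bounded by $\norm{\zeta}_{L^\infty}$ uniformly in $\Eps$, all trajectories remain in the fixed compact set $\Omega(T)$ by Proposition~\ref{prop:unif-supp}, and by Theorem~\ref{thm:conv-traj} the integrand converges pointwise to $\zeta\bigl(x(t;X),u(t,x(t;X))\bigr)$ for every $(X,V)\in\operatorname{supp}f_0$. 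Dominated convergence then gives
\[
\int_{\R^{2d}}\zeta\,\Epsf(t)\dx\dv\;\xrightarrow[\Eps\to0]{}\;\int_{\R^{2d}}\zeta\bigl(x(t;X),u(t,x(t;X))\bigr)\,f_0(X,V)\,dX\,dV=:\langle f(t),\zeta\rangle ;
\]
choosing $\zeta\equiv1$ shows $f(t)$ is a probability measure, and the uniform support bound puts it in $\Pc(\Rd\times\Rd)$ with support in $\Omega(T)$.

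For the explicit form~\eqref{eqn:f-delta} I would note that, since $x(t;X)$ depends on $X$ only, the right-hand integrand is independent of $V$, so integrating out $V$ and using $\rho_0=\int_{\Rd}f_0\dv$ yields $\langle f(t),\zeta\rangle=\int_{\Rd}\zeta(x(t;X),u(t,x(t;X)))\,\rho_0(X)\,dX$. Extend $u(t,\cdot)$ from $\Supp\rho(t)$ to $\overline u(t,\cdot)\in C_b(\Rd)$ (possible by Remark~\ref{remark:outside} or a bounded Lipschitz extension) and set $g(x):=\zeta(x,\overline u(t,x))\in C_b(\Rd)$; since $x(t;X)\in\Supp\rho(t)$ for $X\in\Supp\rho_0$, applying~\eqref{eqn:rho-trans} to the test function $g$ gives
\[
\langle f(t),\zeta\rangle=\int_{\Rd}g\bigl(x(t;X)\bigr)\rho_0(X)\,dX=\int_{\Rd}g(x)\,\rho(t,x)\dx=\int_{\Rd}\zeta\bigl(x,u(t,x)\bigr)\rho(t,x)\dx ,
\]
the last step again because $\rho(t)$ lives where $\overline u=u$. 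As $\zeta$ is arbitrary this is~\eqref{eqn:f-delta}; taking $\zeta=\zeta(x)$ shows the first marginal of $f$ is $\rho$, and $(\rho,u)$ is the unique solution of~\eqref{eq:macroscopic-full} by Theorems~\ref{thm:macro} and~\ref{thm:limit}.

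I expect the subtle points to be, first, the behaviour at $t=0$: the velocity convergence $v_\Eps(t)\to u(t,x(t))$ in Theorem~\ref{thm:conv-traj} is produced by Tikhonov's theorem via a fast initial layer and genuinely fails at $t=0$ unless $f_0$ is well prepared (cf.~Remark~\ref{remark:boundary layer}), so~\eqref{eqn:f-delta} should be read for $t\in(0,T)$ while $\Epsf|_{t=0}=f_0$ is the trivial case; and second, the consistency bookkeeping in the last step, where the field $u$ of~\eqref{eqn:defn-uJ}, defined only on $\Supp\rho(t)$, has to be threaded through two different push-forwards --- this is exactly where the uniform-in-$\Eps$ support bound of Proposition~\ref{prop:unif-supp} and the Lipschitz extension of $u$ earn their keep.
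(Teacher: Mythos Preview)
Your argument is correct and in fact more streamlined than the paper's own proof. The paper first invests effort in establishing \emph{subsequential} weak-$^\ast$ convergence of $\Epsf$: it shows that $t\mapsto\int\psi_2(x,v)\,\Epsf(t)\dx\dv$ is uniformly bounded in $W^{1,\infty}(0,T)$ (using the weak formulation~\eqref{eq:f-Eps-weak} together with the fluctuation bound of Theorem~\ref{thm:G-2}), then invokes tightness to extract a limit $f$ along some $\Eps_k$, and only afterward identifies $f$ via the push-forward identity~\eqref{eqn:trans1} and dominated convergence, finally appealing to uniqueness of the limit to upgrade to full-sequence convergence. You bypass the compactness step entirely: since~\eqref{eqn:trans1} already writes $\int\zeta\,\Epsf(t)$ as an integral against the fixed measure $f_0$, dominated convergence combined with Theorem~\ref{thm:conv-traj} delivers convergence of the full sequence in one stroke, with the limit identified at the same time. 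This buys simplicity and removes the somewhat redundant subsequence--uniqueness loop.

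For~\eqref{eqn:rho-trans} the routes also differ: the paper derives it from the limiting identity by specialising to $\zeta=\varphi(x)$, whereas you read it off from Definition~\ref{defn:rho-transp} once Theorem~\ref{thm:limit} has certified $\rho$ as the unique solution of~\eqref{eq:macroscopic-full} and Theorem~\ref{thm:conv-traj} has matched the limiting flow with $\mathcal{T}^t_{u[\rho]}$ via ODE uniqueness for~\eqref{eq:char-limit}. Both are valid; yours leans a bit more on the well-posedness machinery but is more transparent. Your caveats about the initial layer at $t=0$ and the bounded Lipschitz extension of $u$ off $\Supp\rho(t)$ are exactly the right ones; the paper glosses over both.
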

\begin{proof}
The first part, expressed by equation \eqref{eqn:rho-trans}, follows from considerations made in Theorem \ref{thm:limit}. However, it also follows directly, as a consequence of the argument below.

The limiting behaviour of $\Epsf$ was not explicitly stated or needed in Theorem \ref{thm:limit}, but follows by arguments similar to those used for $J_\Eps$ and $\Epsrho$ in the proof of Theorem \ref{thm:limit}. Let us sketch this argument briefly.

Fix $\psi_1 \in C_c^1(0, T)$ and $\psi_2 \in C^1_b(\Rd \times \Rd)$, and let $\varphi(t, x, v) = \psi_1(t)\psi_2(x,v)$ in \eqref{eq:f-Eps-weak}. Find
\begin{multline}
\label{eqn:feps-weak-1}
    \int_0^T \psi'_1(t)\int_{\R^{2d}} \psi_2(x,v) \Epsf(t, x,v) \dx \dv \dt
    =\\
  - \int_0^T \psi_1(t) \int_{\R^{2d}}  \left[ \nabla_x \psi_2 \cdot v  - \frac{1}{\Eps} \nabla_v \psi_2 \cdot F[\Epsf] \right] \Epsf \dx\dv\dt.
\end{multline}
Denoting by
\begin{equation*}
\tetaEps(t) = \int_{\R^{2d}} \psi_2(x,v) \Epsf(t, x,v) \dx \dv,
\end{equation*}
then, by \eqref{eqn:feps-weak-1}, the weak derivative of $\tetaEps$ is given by
\begin{equation*}    
 {\tetaEps}^{\,^{\prime}}(t)
    = \int_{\R^{2d}} \left[ \nabla_x \psi_2 \cdot v  - \frac{1}{\Eps} \nabla_v \psi_2 \cdot F[\Epsf] \right]   \Epsf \dx\dv 
    \in L^\infty(0, T).
\end{equation*}
The right-hand-side of the equation above is bounded (the boundedness of the term that contains $\Eps$ follows from Theorem \ref{thm:G-2}).

Since $\tetaEps$ is uniformly bounded in $W^{1, \infty}(0, T)$, it converges uniformly on a subsequence. On the other hand, similar to the arguments used for $J_\Eps$ and $\Epsrho$ in the proof of Theorem \ref{thm:limit}, we note that the sequence  $\Epsf(t,\cdot,\cdot) \in \CalP(\Rd \times \Rd) $  is tight,  and hence, for each $t \in [0, T)$, $\Epsf(t,\cdot,\cdot)$ converges weak-$^\ast$ as measures, on a subsequence $\Eps_k$,  
to a probability measure $f(t, \cdot,\cdot) \in \CalP(\R^d \times \Rd)$. Note also that the subsequence $\Eps_k$ does not depend on $t$, due to the equicontinuity of $\tetaEps(t)$ derived above.

Hence,
\begin{equation}
 \label{convf:6}
     f_{\Eps_k} (t, \cdot,\cdot)  \stackrel{w^\ast}{\longrightarrow} f(t, \cdot, \cdot)
\qquad \text{ in } \CalP(\Rd \times \Rd) \quad \text{ as } \Eps_k \to 0,
\end{equation}
which proves the convergence \eqref{eqn:fconv} on a subsequence. To show its  convergence on the full sequence $\Eps\to 0$ we use the uniqueness of $f$, as derived from the arguments below.

Since $\Epsf(t) = \mathcal{T}^{t,\Eps}_{\mathcal{H}[\Epsf]} \# f_0$ by Definition \ref{defn:sol}, \eqref{eqn:trans1} holds for $f_{\Eps_k}$:
\begin{equation}
\label{eqn:trans2}
\int_{\R^{2d}} \zeta (x,v) f_{\Eps_k}(t,x,v) dx dv = \int_{\R^{2d}} \zeta(\mathcal{T}^{t,\Eps_k}_{\mathcal{H}[f_{\Eps_k}]}(X,V)) f_0(X,V) dX dV,
\end{equation}
for all $\zeta \in C_b(\Rd \times \Rd)$.

By the weak-$^\ast$ convergence of $f_{\Eps_k}$, the left-hand-side of \eqref{eqn:trans2} converges as $\Eps_k \to 0$:
\[
\int_{\R^{2d}} \zeta (x,v) f_{\Eps_k}(t,x,v) dx dv \to \int_{\R^{2d}} \zeta (x,v) f(t,x,v) dx dv.
\]
Due to convergence of trajectories \eqref{eqn:conv-trajx}, 
the right-hand-side of \eqref{eqn:trans2} converges by Lebesgue's dominated convergence theorem,
\[
\int_{\R^{2d}} \zeta(\mathcal{T}^{t,\Eps_k}_{\mathcal{H}[f_{\Eps_k}]}(X,V)) f_0(X,V) dX dV \to \int_{\R^{2d}} \zeta(x(t;X),
u(t,x(t;X))) f_0(X,V) dX dV,
\]
as $\Eps_k \to 0$. Combining these two, we find
\begin{equation}
\label{eqn:equal1}
 \int_{\R^{2d}} \zeta (x,v) f(t,x,v) dx dv = \int_{\R^{2d}} \zeta(x(t;X), u(t,x(t;X))) f_0(X,V) dX dV,
\end{equation}
for all $\zeta \in C_b(\Rd \times \Rd)$.

First note that \eqref{eqn:rho-trans} can be derived from \eqref{eqn:equal1}. Indeed, choose $\zeta(x,v) = \varphi(x)$ in \eqref{eqn:equal1} to find 
\[
\int_{\Rd} \varphi(x) \rho(t,x) dx = \int_{\Rd} \varphi (x(t;X)) \rho_0(X) dX,
\]
for all $\varphi \in C_b(\Rd)$; this represents exactly the mass transport given by \eqref{eqn:rho-trans}.

Now, observe that \eqref{eqn:f-delta} is equivalent to 
\begin{equation*}
 \int_{\R^{2d}} f(t,x,v) \zeta(x,v) dx dv = \int_{\Rd}  \zeta(x,u(t,x)) \rho(t,x) dx,
\end{equation*}
for all test functions $\zeta \in C_b(\Rd \times \Rd)$, which can be inferred immediately from \eqref{eqn:rho-trans} and \eqref{eqn:equal1}.

The {\em unique} explicit representation of the limiting density $f$ implies that the convergence in \eqref{convf:6} holds on the full sequence $\Epsf$, as stated in \eqref{eqn:fconv}.
\end{proof}


\section{Numerical implementation and large time behaviour}
\label{sect:numerics}

In this section, we present and implement a numerical scheme for the
macroscopic system ~\eqref{eq:macroscopic-full}. The numerical approach taken here follows closely the analytical considerations made in Section \ref{sect:well-posedness}. In particular, it provides a discrete analogue for replacing the singular operator $\CalA$ by an invertible operator $\CalM$.

\subsection{Discrete setting} 
\label{subsect:discr-setting}
The numerical discretization of the evolution equation \eqref{eq:rho-full} for the density $\rho$  can be done by standard methods. For instance, spatial discretization can be performed by a finite volume method \cite{CCH2015} or by a semi-Lagrangian particle method \cite{DoHuPoRo2004}. For this reason, we focus in this section on the numerical solution of the velocity equation \eqref{eq:u-implicit-full}.  
The subtlety is that for a given $\rho$, this equation does not have a unique solution for $u$, and hence, a direct discretization would lead to a singular system. 

We illustrate this degeneracy in one dimension.  To this end, let $h$ be a fixed mesh size and $x_i=ih$ be equally distributed nodes. The range of $i$ is $-N,\dots,N$, with $N \in \N$ large enough such that the support of the density is within the computational domain $[-x_N, x_N]$. The time dependence is irrelevant for the discretization of  \eqref{eq:u-implicit-full} and we drop it in the calculations below.

Denote by $\rho_i$ and $u_i$, $i=-N,\dots,N$, the numerical approximations of $\rho(x_i)$ and $u(x_i)$, respectively. Also, denote by $\rhonum$ and $\unum$ the column vectors containing these values:
\[
 \rhonum = (\rho_{-N}, \dots, \rho_N)^T, \qquad \unum = (u_{-N}, \dots, u_N)^T.
\]
We choose the midpoint rule to approximate the integrals in \eqref{eq:u-implicit-full}; note however that the argument below can be adapted to apply to higher order quadrature rules. This gives
\begin{align*}
   \Phi(x_i)
   =\int_\R \phi(|x_i-y|)\rho(y)dy
   ~\approx~& h \sum_{k} \phi(|i-k|h) \rho_k,
\\
   \int_\R \phi(|x_i-y|) \rho(y) u(y) dy
   ~\approx~& h \sum_k \phi(|i-k|h) \rho_k u_k,
\\
\int_\R K'(x_i-y)\rho(y)dy
    ~\approx~& h \sum_k K' ((i-k)h) \rho_k.
\end{align*}

Introduce notations
\[
\phi_i=\phi(|i|h), \quad K'_i = K'(ih), \qquad i=-N,\dots,N.
\]
The following symmetries, respectively antisymmetries, hold:
\[
\phi_i = \phi_{-i}, \qquad K'_i = -K'_{-i}, \qquad \text{ for all } i=-N,\dots,N,
\]
where the antisymmetry of $K'_i$ follows from \eqref{eqn:Ksymmetric}. Also, since the influence function $\phi$ is assumed to be positive and non-increasing (in accord with hypotheses made throughout the paper), $\phi$ has a lower bound on the computational domain and we have $\phi_i \geq \eta$,  for all $i=-N,\dots,N$, where $\eta = \phi(2x_N)$. Note that we made an abuse of notation here, as a variable $\eta$, with a very similar meaning and role, was defined in \eqref{eqn:etadef}; however we try to parallel the analytical considerations made in the proof of Proposition \ref{Prop:2-1}, and  refrain from introducing unnecessary extra notations.

The discretization of \eqref{eq:u-implicit-full} reduces then to solving the linear system 
\begin{equation}
\label{eqn:linsys}
A \unum= \bnum,
\end{equation}
where
\begin{equation}
\label{eqn:matA}
\begin{aligned}
A=&\begin{bmatrix}
\sum_k \phi_{-N-k}\rho_k&&\\ &\ddots& \\ &&\sum_k\phi_{N-k}\rho_k
\end{bmatrix}-
\begin{bmatrix}
\phi_{-N-(-N)}\rho_{-N}&\cdots&\phi_{-N-N}\rho_N\\
 \vdots&\ddots&\vdots \\ \phi_{N-(-N)}\rho_{-N}&\cdots&\phi_{N-N}\rho_N
\end{bmatrix},
\end{aligned}
\end{equation}
and 
\begin{equation}
\label{eqn:b}
\bnum=(b_{-N},\dots,b_N)^T, \qquad b_i=-\sum_kK'_{i-k}\rho_k, \quad i=-N,\dots,N.
\end{equation}

The matrix $A$ is the discrete analogue of the operator $\CalA$ defined in \eqref{eq:calA}. Similar to the continuous case, matrix $A$ is singular, and solutions of the linear system are not unique. This fact is detailed in the following proposition.

\begin{Proposition}[Existence and non-uniqueness] \label{prop:matrix-A}
Matrix $A$ given by \eqref{eqn:matA} is singular with $dim(Null(A))=1$. Moreover, the discrete linear system \eqref{eqn:linsys}-\eqref{eqn:b} has infinitely many solutions. 
\end{Proposition}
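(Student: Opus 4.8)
The plan is to reproduce at the discrete level the two ingredients used in the proof of Proposition~\ref{Prop:2-1}: the fact that the alignment operator annihilates constants, and the invertible modification obtained by adding the momentum term. First I would read off from \eqref{eqn:matA} that $A$ has zero row sums: for $i\ne j$ the $(i,j)$-entry of $A$ is $-\phi_{i-j}\rho_j$, while its $i$-th diagonal entry is $\sum_k\phi_{i-k}\rho_k-\phi_0\rho_i=\sum_{k\ne i}\phi_{i-k}\rho_k$, so $\sum_j A_{ij}=0$ for every $i$, i.e. $A\One=0$, where $\One=(1,\dots,1)^T$ is the discrete counterpart of a spatially constant velocity field. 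This shows that $A$ is singular, that $\mathrm{Null}(A)\supseteq\mathrm{span}\{\One\}$, and that $\mathrm{rank}(A)\le 2N$.

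Next I would introduce $M:=A+\eta\,\One\,\rhonum^T$, the discretization of the operator $\CalM$ in \eqref{eq:calM} (it adds $\eta\sum_k\rho_k u_k$ to each component of $A\unum$), and prove that $M$ is invertible by the same maximum-type argument as in Proposition~\ref{Prop:2-1}. Given $\unum\ne 0$, pick $i^*$ with $|u_{i^*}|=\|\unum\|_\infty$; replacing $\unum$ by $-\unum$ if needed we may assume $u_{i^*}>0$. Using $\phi_{i^*-j}-\eta\ge 0$ (valid since $|i^*-j|h\le 2x_N$ and $\phi$ is non-increasing), $\rho_j\ge 0$, $u_j\le u_{i^*}$, and $M_{ij}=\delta_{ij}\sum_k\phi_{i-k}\rho_k-(\phi_{i-j}-\eta)\rho_j$, the $i^*$-th component of $M\unum$ is at least $\bigl(\sum_k\phi_{i^*-k}\rho_k\bigr)u_{i^*}-\bigl(\sum_j(\phi_{i^*-j}-\eta)\rho_j\bigr)u_{i^*}=\eta\bigl(\sum_j\rho_j\bigr)u_{i^*}>0$, since $\sum_j\rho_j>0$ (the $\rho_j$ discretize a probability density) and $\eta=\phi(2x_N)>0$. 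Hence $M\unum\ne 0$, so $M$ is invertible. Because $M$ and $A$ differ by the rank-one matrix $\eta\,\One\,\rhonum^T$, we get $\mathrm{rank}(A)\ge\mathrm{rank}(M)-1=2N$, which together with the previous bound forces $\mathrm{rank}(A)=2N$. Therefore $\dim\mathrm{Null}(A)=1$, and in fact $\mathrm{Null}(A)=\mathrm{span}\{\One\}$.

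For the existence (and hence infinitude) of solutions of \eqref{eqn:linsys}, I would show $\bnum\in\mathrm{Range}(A)=\mathrm{Null}(A^T)^\perp$. The key remark is that $\rhonum$ itself lies in $\mathrm{Null}(A^T)$: a direct computation, using the symmetry $\phi_{i-j}=\phi_{j-i}$, gives $(A^T\rhonum)_i=\rho_i\bigl(\sum_{k\ne i}\phi_{i-k}\rho_k-\sum_{j\ne i}\phi_{i-j}\rho_j\bigr)=0$. Since $\dim\mathrm{Null}(A^T)=\dim\mathrm{Null}(A)=1$ and $\rhonum\ne 0$, we conclude $\mathrm{Null}(A^T)=\mathrm{span}\{\rhonum\}$. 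It then remains to check $\rhonum^T\bnum=0$: by \eqref{eqn:b}, $\rhonum^T\bnum=-\sum_{i,k}\rho_i\rho_k K'_{i-k}$, and this double sum vanishes because $K'_i=-K'_{-i}$ (inherited from \eqref{eqn:Ksymmetric}) makes it antisymmetric under the exchange $i\leftrightarrow k$ — the discrete avatar of the potential symmetry / momentum conservation used throughout the paper. Hence $\bnum\perp\mathrm{Null}(A^T)$, so \eqref{eqn:linsys} is consistent; since $\dim\mathrm{Null}(A)=1$, its solution set is an affine line $\unum_0+\mathrm{span}\{\One\}$, i.e. it has infinitely many solutions — mirroring the translational invariance $u\mapsto u+\text{const}$ noted in the Introduction.

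The only step with real content is the invertibility of $M$, equivalently the rank count; I expect no genuine difficulty, since it is a line-by-line transcription of Proposition~\ref{Prop:2-1}. The two points that deserve (minor) care are the nonnegativity $\rho_i\ge 0$ together with $\sum_i\rho_i>0$ for the discretized probability density in the $M$-estimate, and the antisymmetry $K'_i=-K'_{-i}$ in the consistency step.
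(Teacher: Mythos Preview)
Your proof is correct. The singularity step and the consistency/Fredholm step are essentially identical to the paper's proof. The one genuine difference is in how you obtain $\dim\mathrm{Null}(A)=1$: the paper argues directly that deleting the $i$-th row and column of $A$ (for some $i$ with $\rho_i\neq 0$) leaves a strictly diagonally dominant submatrix, hence of full rank, which immediately gives $\mathrm{rank}(A)\geq 2N$. You instead bring in the modified matrix $M=A+\eta\,\One\,\rhonum^{\,T}$, prove it invertible by the maximum-type estimate of Proposition~\ref{Prop:2-1}, and deduce $\mathrm{rank}(A)\geq 2N$ from the rank-one perturbation bound. Both routes are short and elementary; the paper's submatrix argument is marginally more direct here, while your approach has the advantage of making the parallel with the continuous theory explicit and of anticipating Proposition~\ref{prop:discreteunique}, where $M$ reappears and its strict diagonal dominance (equivalent to your maximum estimate) is the key point.
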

\begin{proof}
It is easy to check that the entries of each row of $A$ add up to zero; therefore,
$A$ is singular. To show that  $dim(Null(A))=1$ we simply note that if we remove the $i$-th row
and the $i$-th column of $A$ (for an index $i$ such that $\rho_i \neq 0$), the remaining matrix is strictly diagonally
dominant, and hence it has full rank. 

To prove existence of infinitely many solutions, as opposed to no solution, we
need to check $\bnum \in Range(A)$. Indeed, by symmetry of $\phi_i$, $A^T \vec{\rho} =0$, and hence
\[
Null(A^T)=span\{\vec\rho\, \}.
\]
Finally, by the antisymmetry of $K'_i$, one has $\bnum \perp \rhonum$, which, by Fredholm alternative, yields the conclusion.
\end{proof}

Using Propostion~\ref{prop:matrix-A}, we immediately have
\begin{Proposition}[Uniqueness with momentum conservation]\label{prop:discreteunique} There exists a
  unique solution $\unum$ of \eqref{eqn:linsys}-\eqref{eqn:b}, subject to the (discrete) momentum conservation condition 
\begin{equation}
\label{eqn:discr-mom}
  \rhonum^{\;T} \unum=0.
\end{equation}
\end{Proposition}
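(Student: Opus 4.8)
The plan is to deduce the statement quickly from the structural facts about $A$ already recorded in Proposition~\ref{prop:matrix-A}, reducing the constrained linear system to a single scalar equation. Write $\mathbf{1}=(1,\dots,1)^{T}\in\R^{2N+1}$. Since every row of $A$ sums to zero we have $A\mathbf{1}=0$, and since $\dim(\mathrm{Null}(A))=1$ by Proposition~\ref{prop:matrix-A}, it follows that $\mathrm{Null}(A)=\mathrm{span}\{\mathbf{1}\}$. Proposition~\ref{prop:matrix-A} also gives $\bnum\in\mathrm{Range}(A)$, so \eqref{eqn:linsys}--\eqref{eqn:b} is solvable and its full solution set is the affine line
\[
\mathcal{L}=\{\,\unum_{0}+c\,\mathbf{1}\ :\ c\in\R\,\},
\]
where $\unum_{0}$ is any fixed particular solution.

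Next I would impose the discrete momentum constraint \eqref{eqn:discr-mom} on elements of $\mathcal{L}$. For $\unum=\unum_{0}+c\,\mathbf{1}$ one computes $\rhonum^{\,T}\unum=\rhonum^{\,T}\unum_{0}+c\,(\rhonum^{\,T}\mathbf{1})$ with $\rhonum^{\,T}\mathbf{1}=\sum_{k=-N}^{N}\rho_{k}$. The only place the nature of $\rho$ enters — and essentially the only point needing a word of care, as there is otherwise no real obstacle beyond elementary linear algebra — is the observation that this number is nonzero: since $\rho$ is a nonnegative, nontrivial probability density supported inside the computational domain (with the natural normalization $h\sum_{k}\rho_{k}=1$), we have $\rhonum^{\,T}\mathbf{1}>0$. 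Hence $\rhonum^{\,T}\unum=0$ is equivalent to the scalar equation $c\,(\rhonum^{\,T}\mathbf{1})=-\rhonum^{\,T}\unum_{0}$, which has the unique root $c=-\rhonum^{\,T}\unum_{0}/(\rhonum^{\,T}\mathbf{1})$; the corresponding $\unum\in\mathcal{L}$ is then the unique solution of \eqref{eqn:linsys}--\eqref{eqn:b} satisfying \eqref{eqn:discr-mom}.

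Finally, I would record the equivalent formulation that parallels the proof of Proposition~\ref{Prop:2-1} at the discrete level, since this is the form actually used in the numerical scheme. Introduce the matrix $M=A+\eta\,\mathbf{1}\rhonum^{\,T}$, the discrete analogue of the operator $\CalM$ in \eqref{eq:calM}, with $\eta=\phi(2x_{N})$. Invertibility of $M$ follows from the same two ingredients: if $M\unum=0$, pairing with $\rhonum$ and using $A^{T}\rhonum=0$ (shown in the proof of Proposition~\ref{prop:matrix-A}) gives $\eta\,(\rhonum^{\,T}\mathbf{1})\,(\rhonum^{\,T}\unum)=0$, whence $\rhonum^{\,T}\unum=0$; then $A\unum=0$ forces $\unum=c\,\mathbf{1}$ and $\rhonum^{\,T}\unum=0$ forces $c=0$. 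Conversely, since $\bnum\perp\rhonum$, the same pairing shows that the unique solution of $M\unum=\bnum$ automatically obeys \eqref{eqn:discr-mom} and therefore solves \eqref{eqn:linsys}--\eqref{eqn:b}; by the uniqueness just proved it coincides with the solution found above. This is the discrete counterpart of replacing the singular $\CalA$ by the invertible $\CalM$, and it is the identity on which the numerical implementation rests.
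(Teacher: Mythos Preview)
Your proof is correct. The paper itself remarks that the result ``can be derived directly from Proposition~\ref{prop:matrix-A}, using the structure of $Null(A)$'', which is exactly your first argument; it then chooses instead the constructive route via $M=A+\eta\,\enum\rhonum^{\,T}$, but establishes invertibility of $M$ by observing that all off-diagonal entries of $M$ are nonpositive and that $M$ is strictly diagonally dominant (the row excess equals $\eta\sum_k\rho_k>0$). Your second paragraph reaches the same $M$ and the same conclusion, but proves invertibility by a kernel argument that pairs with $\rhonum$ and uses $A^{T}\rhonum=0$ and $\mathrm{Null}(A)=\mathrm{span}\{\mathbf{1}\}$. The two arguments are equivalent in strength; the paper's diagonal-dominance check is a one-line computation that avoids invoking the null-space structure again, while your approach makes the parallel with the continuous operators $\CalA$ and $\CalM$ (and the role of momentum conservation) more transparent.
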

\begin{proof}
Although this result can be derived directly from Proposition~\ref{prop:matrix-A}, using the structure of $Null(A)$ deduced in its proof, we show here a more constructive way, that is also more relevant to the numerical implementation. Let $\enum=(1,\cdots,1)^T$. 
Then, a solution $\unum$ of  \eqref{eqn:linsys}-\eqref{eqn:b} which satisfies the discrete momentum conservation 
condition \eqref{eqn:discr-mom}, also solves
\begin{equation}
\label{eqn:linsysM}
M \unum= \bnum,
\end{equation}
with
\begin{equation}
\label{eqn:matM} 
M=A+\eta \, \enum  \rhonum^{\; T}.
\end{equation}
Here, $\eta$ represents the positive lower bound of $\phi_i$, $i=-N,\dots,N$, mentioned above. 

The matrix $M$ has the form
\begin{align*}
M=&\begin{bmatrix}
\sum_k\phi_{-N-k}\rho_k&&\\ &\ddots& \\ &&\sum_k\phi_{N-k}\rho_k
\end{bmatrix}-
\begin{bmatrix}
(\phi_{-N-(-N)}-\eta)\rho_{-N}&\cdots&(\phi_{-N-N}-\eta)\rho_N\\
 \vdots&\ddots&\vdots \\ (\phi_{N-(-N)}-\eta)\rho_{-N}&\cdots&(\phi_{N-N}-\eta)\rho_N
\end{bmatrix}.
\end{align*}
Since all the off-diagonal entries of $M$ are negative, $M$ can be shown to be strictly diagonally
dominant. Hence, $M$ is an invertible matrix, and \eqref{eqn:linsysM} has a unique solution.
\end{proof}
\begin{rmk} The discrete theory aligns perfectly well with the results in Section \ref{sect:well-posedness}. In particular,
matrices $A$ and $M$ are the discrete analogues of the singular, respectively invertible, operators $\CalA$ and $\CalM$ defined in \eqref{eq:calA} and \eqref{eq:calM}, while the conservation \eqref{eqn:discr-mom} of the discrete linear momentum is used as an auxiliary condition that enforces uniqueness of the numerical solution. 
\end{rmk}

To conclude, our numerical discretization of \eqref{eq:u-implicit-full} consists in the
following procedure. First, form the matrix $A$ using appropriate
quadrature rules on integrations. Next, change the singular matrix $A$
into the invertible matrix $M$ given by \eqref{eqn:matM}, and solve the linear system \eqref{eqn:linsysM}. For an efficient numerical implementation, it is important to note that the velocities $u_i$ at grid points with  nontrivial density values $\rho_i \neq 0$ do {\em not} depend on values $u_j$ for which $\rho_j=0$;
this comment is in fact the discrete counterpart of Remark \ref{remark:outside}. Hence, one can consider only the nodes (or indices) that correspond to nontrivial densities and reduce the size of the system. 

\medskip
\subsection{Numerical examples} 
Below we illustrate our method with three numerical simulations: two examples in 1D with smooth and non-smooth potentials, respectively, and a 2D example with a smooth potential. We make choices of potentials for which explicit steady states 
of \eqref{eq:macroscopic-full} can be calculated.
The influence function in all the cases is chosen as
\begin{equation}
\label{eqn:phiex}
\phi(r)=(1+r^2)^{-1/2}.
\end{equation}

\smallskip
{\bf Example 1}  (1D with smooth potential). 
We consider a power-law interaction potential with quartic attraction and quadratic repulsion:
\begin{equation}
\label{eqn:Kex1}
K(x)=\frac{x^4}{4}-\frac{x^2}{2}.
\end{equation}
The derivative $K'$ is smooth, but not bounded, which means that $K'$ lies in ${W}_{loc}^{1,\infty}(\R)$, but not in ${W}^{1,\infty}(\R)$, as assumed by theoretical results in previous sections. In simulations however, we deal with a bounded computational domain, on which $K'$ and $K''$ are of course, bounded.

The initial condition is chosen to be a smooth bump function
\begin{equation}
\label{eqn:ic-bump}
\rho_0(x)=C_se^{-\frac{1}{s^2-x^2}}\One_{(-s,s)}(x),
\end{equation}
where $s$ is a positive integer which determines the support of $\rho_0$, $\One$ denotes the indicator function, and $C_s$ is a normalization constant such that
$\|\rho_0\|_{L^1}=1$.

Figure \ref{fig:Ex1} shows the time evolution of $\rho$ starting from initial data \eqref{eqn:ic-bump} with $s=0.5$ (top row) and $s=1.5$ (bottom row). The computational domain is set to be $[-2,2]$ and the mesh size is $h=.02$. In both cases we observe that the solution approaches (by expanding, respectively compressing) the same steady state $\rho_\infty$ given by two Dirac delta distributions located at $-1/2$ and $1/2$:
\begin{equation}
\label{eqn:ss-twodelta}
\rho_\infty(x)=\frac{1}{2}\left[\delta(x-1/2)+\delta(x+1/2)\right].
\end{equation}
We note that \eqref{eqn:ss-twodelta} is in fact a steady state of the attraction-repulsion model (with no alignment), i.e,
\begin{equation}\label{eq:att-rep}
\partial_t\rho-\Gradx \cdot (\rho(\Gradx K\ast\rho))=0.
\end{equation}
\begin{figure}[h]
\includegraphics[width=\textwidth]{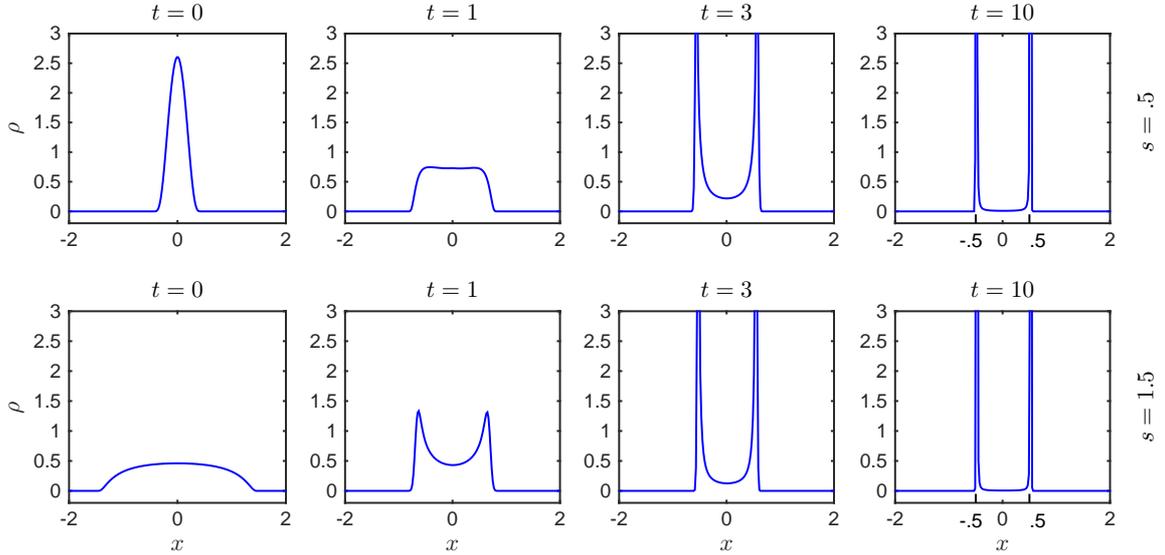}
\caption{Time evolution of the solution $\rho(t,x)$ of \eqref{eq:macroscopic-full} in one dimension, starting from the initial density \eqref{eqn:ic-bump} with  $s=0.5$ (top row) and
  $s=1.5$ (bottom row). The influence function is given by \eqref{eqn:phiex} and the interaction potential by \eqref{eqn:Kex1}. The two density profiles expand, respectively compress,  toward the same equilibrium state given by \eqref{eqn:ss-twodelta}.}
 \label{fig:Ex1}
\end{figure} 

\smallskip
{\bf Example 2} (1D with Morse potential).
We now take a Morse-type interaction potential:
\begin{equation}
\label{eqn:Kex2}
K(x)=-e^{-|x|/2}+e^{-|x|}.
\end{equation}
Note again that due to the singularity at the origin,  $K' \not \in W^{1,\infty}(\R)$.  In terms of numerical simulations however, singularities in the potential are not an issue of concern (at discrete level, a pointy potential and a very localized regularization of it, are essentially the same).

In this simulation we choose a more general non-smooth and non-symmetric initial density:
\begin{equation}
\label{eqn:ic-gen}
\rho_0(x)=\frac{1}{2}(1-x)\One_{(-1,1)}(x).
\end{equation}

Figure \ref{fig:Ex2} shows the time evolution of the solution (solid line) at  $t=0, 2, 5, 20$. In the same plot we show (dashed-line) the steady state of the attractive-repulsive equation \eqref{eq:att-rep} with the Morse-type
potential \eqref{eqn:Kex2}. An exact, explicit form of this equilibrium solution was derived in \cite{BT2011}:
\begin{equation}
\label{eqn:ss-Morse}
\rho_\infty(x)=C\left(\cos(\mu(x-c))-\lambda\right)\cdot\One_{|x-c|<H}(x),
\end{equation}
where the constants are given as follows: $\mu=1/\sqrt{2}$, $\lambda=-1/(3+\pi/\sqrt{2})$, $H=\pi/\sqrt{2}$. Also, $C=3/(2(\pi+3 \sqrt{2}))$ is a scaling that sets the mass to one, and $c=1/3$ is a shift of the centre of mass to the origin.
Similar to Example 1, we observe that the solution approaches asymptotically the steady state $\rho_\infty$.

\begin{figure}[h]
\includegraphics[width=\textwidth]{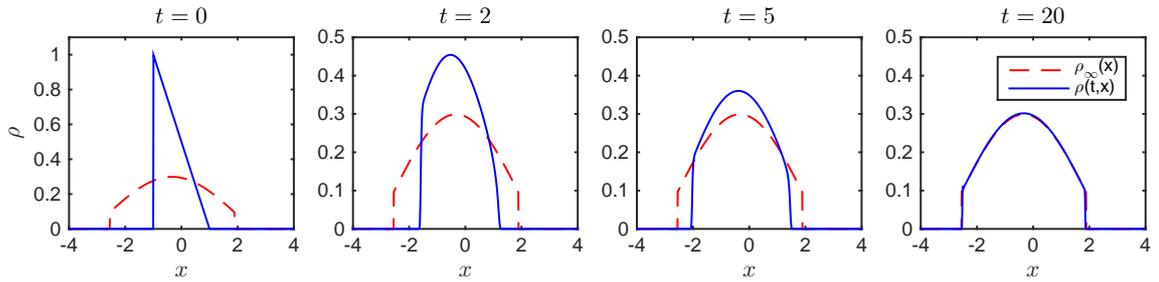}
\caption{Time evolution of the solution $\rho(t,x)$ of \eqref{eq:macroscopic-full} in one dimension, starting from the initial density \eqref{eqn:ic-gen}. The influence function is given by \eqref{eqn:phiex} and the interaction potential by \eqref{eqn:Kex2}. The solution (solid curve) approaches asymptotically the equilibrium state $\rho_\infty$ (dashed line) of the attractive-repulsive model \eqref{eq:att-rep}, given explicitly by \eqref{eqn:ss-Morse}.}
 \label{fig:Ex2}
\end{figure} 

\smallskip
{\bf Example 3} (2D with Newtonian-quadratic potential).
In the final example, we check the asymptotic behaviour of solutions to \eqref{eq:macroscopic-full}  in 2D.
The algorithm presented above can be immediately extended to two dimensions. 
We take an interaction potential $K$ with Newtonian repulsion and quadratic attraction:
\begin{equation}
\label{eqn:Kex3}
K(x)=-\log|x|+\frac{1}{2}x^2.
\end{equation}
With this choice of $K$, it has been shown that the steady state of the attractive-repulsive model \eqref{eq:att-rep} is
the indicator function of a disk:
\begin{equation}
\label{eqn:ss-2d}
\rho_\infty(x)=C\One_{|x|<1}(x-c),
\end{equation}
where the scale $C$ is set by conservation of mass, and $c$ is a shift of the centre of mass  \cite{FeHuKo11}.

We take the following radially symmetric initial density:
\begin{equation}
\label{eqn:ic-2d}
\rho_0(r) = (r-1)^2(r-2)^2\One_{1<r<2}(r).
\end{equation}
The numerical time evolution illustrated in Figure \ref{fig:ex3} indicates again, that  the solution approaches asymptotically $\rho_\infty$ (here $C=0.1$ and $c=0$). 
\begin{figure}[h]
\includegraphics[width=\textwidth]{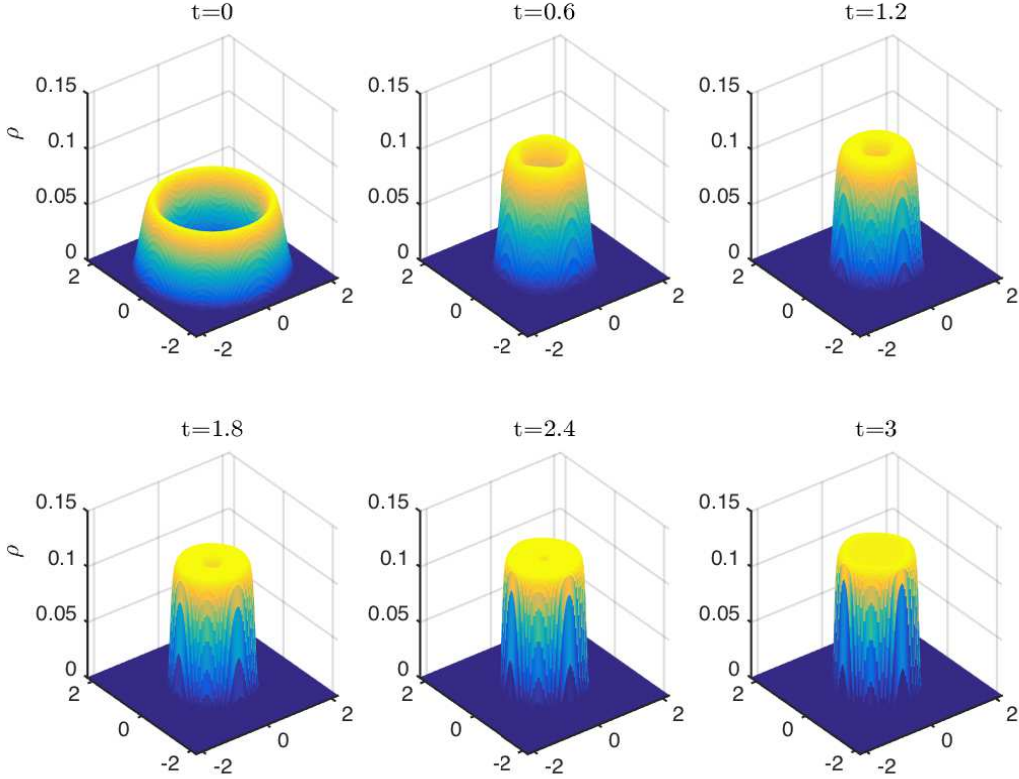}
\caption{Time evolution of the density $\rho(t,x)$ of the aggregation model \eqref{eq:macroscopic-full} in two dimensions, starting from the initial density \eqref{eqn:ic-2d}. The influence function is given by \eqref{eqn:phiex} and the interaction potential by \eqref{eqn:Kex3}. The solution approaches asymptotically the equilibrium state \eqref{eqn:ss-2d} consisting of a constant distribution in the unit disk.}
\label{fig:ex3}
\end{figure}

In this experiment we set the computational domain to be $[-2.2, 2.2]\times[-2.2,2.2]$ and fix an Eulerian computational grid with ${\rm d}x= {\rm d}y=0.04$.  We use a semi-Lagrangian scheme to evolve the density equation in time \cite{DoHuPoRo2004}. Specifically, we evolve numerically, using a forward Euler integrator with ${\rm d} t = 0.005$, the characteristic trajectories \eqref{eq:char-limit} and the density equation \eqref{eq:rho-full} written in characteristic form. After each time step, we interpolate the density values from the Lagrangian (distorted) grid back to the Eulerian grid, calculate the new velocity field according to the method described in Section \ref{subsect:discr-setting}, and repeat the procedure.

We conclude this section by noting that our preliminary numerical investigations of model \eqref{eq:macroscopic-full} (as presented here) suggest that the large time behaviour of solutions to \eqref{eq:macroscopic-full} is:
\[
\rho(t,x)\to\rho_\infty(x), \qquad \text{ for } x\in\Supp\rho(t), \quad \text{ as } t\to \infty,
\]
where $\rho_\infty$ is a steady state of \eqref{eq:att-rep}.
Consequently,
\[
u(t,x)\to0,\qquad \text { for all } x \in\Supp \rho (t), \quad \text { as } t \to \infty.
\]
This means that the macroscopic velocity $u$ converges to its (zero) average as time
goes to infinity, which corresponds to \emph{flocking}. Whether the model
always approaches asymptotically the flocking limit is left for future investigation.

\newpage
\bibliographystyle{elsarticle}
\bibliography{lit}


\end{document}